\newtheorem{theorem}{\sc Theorem}[section]
 \newtheorem{corollary}[theorem]{\sc Corollary}
 \newtheorem{proposition}[theorem]{\sc Proposition}
\newtheorem{example}[theorem]{\sc Example}
\newtheorem{remark}[theorem]{\sc Remark}
\newcommand{\F}{\mathbb{F}}
\newcommand{\PG}{\mathrm{PG}}
\newcommand{\B}{\mathcal{B}}
\newcommand{\D}{\mathcal{D}}
\newcommand{\SSS}{\mathbb{S}}
\newcommand{\wt}{\mathrm{wt}}
\begin{document}

\title{Scattered Spaces in Galois Geometry}

\author{Michel Lavrauw}
\address{Michel Lavrauw\\ Universit\`a degli Studi di Padova, Italy, {\url{michel.lavrauw@unipd.it}}}


\date{January 27, 2016}

\maketitle

\begin{abstract}
This is a survey paper on the theory of scattered spaces in Galois geometry and its applications.
\end{abstract}

\section{Introduction and motivation} \label{lavrauw:sec:introduction}

Given a set $\Omega$ and a set $S$ of subsets of $\Omega$, a subset $U\subset \Omega$ is called {\em scattered} with respect to $S$ if $U$ intersects each element of $S$ in at most one element of $\Omega$. In the context of Galois Geometry this concept was first studied in 2000 \cite{BaBlLa2000}, where the set $\Omega$ was the set of points of the projective space $\PG(11,q)$ and $S$ was a $3$-spread of $\PG(11,q)$. The terminology of a {\em scattered space}\footnote{This notion of scattered spaces is not to be confused with scattered spaces in Topology,
where a space $A$ is called {\em scattered} if every non-empty subset $T\subset A$ contains a point isolated in $T$.} was introduced later in \cite{BlLa2000}.  
The paper \cite{BaBlLa2000} was motivated by the theory of blocking sets, and it was shown that there exists a $5$-dimensional subspace, whose set of points $U \subset \Omega$ is scattered with respect to $S$, which then led to an interesting construction of a $(q+1)$-fold blocking set in $\PG(2,q^4)$. The notion of ``being scattered" has turned out to be a useful concept in Galois Geometry. 
This paper is motivated by the recent developments in Galois Geometry involving scattered spaces. The first part aims to give an overview of the known results on scattered spaces (mainly from \cite{BaBlLa2000}, \cite{BlLa2000}, and \cite{Lavrauw2001}) and the second part gives a survey of the applications.

\section{Notation and terminology}

A {\em $t$-spread} of a vector space $V$ is a partition of $V\setminus \{0\}$ by subspaces of constant dimension $t$. Equivalently, a $(t-1)$-spread of $\PG(V)$ (the projective space associated to $V$) is a set of $(t-1)$-dimensional subspaces partitioning the set of points of $\PG(V)$. Sometimes the shorter term {\em spread} is used, when the dimension is irrelevant or clear from the context. Two spreads $S_1$, $S_2$ of $\PG(V)$ (respectively $V$) are called {\em equivalent} if there exists a collineation $\alpha$ of $\PG(V)$ (respectively, an element $\alpha$ of ${\mathrm{\Gamma L(V)}}$)  such that $S_1^\alpha = S_2$.

A standard construction of a spread (going back to Segre \cite{Segre1964}) is the following. 
We sketch the construction in the context of the larger framework of {\em field reduction} techniques on which 
we will elaborate in Section \ref{subsec:linear_sets}.
Consider any $\F_q$-vector space isomorphism from $\F_{q^t}\rightarrow \F_q^t$, and extend this to an isomorphism $\varphi$ between the $\F_q$-vector spaces $\F_{q^t}^r$ and $\F_q^{rt}$. For each nonzero vector $v\in V(r,q^t)$ consider the vector space $S_v=\{\varphi(\lambda v)~:~\lambda \in \F_{q^t}\}$. One easily verifies that the set ${\mathcal{D}}_{r,t,q}:=\{S_v~:~ v \in V(r,q^t)\}$ defines a $t$-spread in $V(rt,q)$.
A spread $S$ is called {\em Desarguesian} if $S$ is equivalent to $\D_{r,t,q}$ for some $r$, $t$ and $q$.

Let $D$ be any set of subspaces in $V(n,q)$. A subspace $W$ of $V$ is called {\em scattered with respect to $D$} if $W$ intersects each element of $D$ in at most a $1$-dimensional subspace. Equivalently, a subspace of a projective space $\PG(V)$ is called scattered with respect to a set of subspaces $D$ of $\PG(V)$ if it intersects each element of $D$ in at most a point. In this paper $D$ will typically be a spread.
We note that we will often switch between vector spaces and projective spaces, assuming that the reader is familiar with both terminologies. To avoid overcomplicating the notation, we will use the same symbol (for instance $D$) for subset of the subspaces of a projective space and its associated object in the underlying vector space.
We will make sure that there is no ambiguity concerning vector space dimension and projective dimension. 

If $D$ is any set of subspaces of a vector space or a projective space, and $W$ is a subspace of the same space, then by $\B_D(W)$ we denote the set of elements of $D$ which have a nontrivial intersection with $W$. If there is no confusion possible, then we also use the simplified notation $\B(W)$.

\section{Scattered spaces}\label{lavrauw_section:basics}
We start this section with a number of examples illustrating some of the difficulties that arise in the study of scattered spaces with respect to spreads.
\begin{example}\label{lavrauw_example:easy}
\begin{enumerate}
\item If $D$ is a spread of lines in $\PG(3,q)$ then every line not contained in the spread is scattered w.r.t. $D$. Also, since $|D|=q^2+1$ and a plane contains $q^2+q+1$ points, no plane of $\PG(3,q)$ is scattered w.r.t. $D$.
\item If $D$ is a spread of planes in $\PG(5,q)$ then every line not contained in an element of the spread is scattered w.r.t. $D$. Also since $|D|=q^3+1$ and a solid (a 3-dimensional projective space) contains $q^3+q^2+q+1$ points, no solid of $\PG(3,q)$ is scattered w.r.t. $D$. The existence of a scattered plane is not immediately clear, but this will follow from one of the results in the next sections (Theorem \ref{general lower bound}).
\item If $D$ is a spread of lines in $\PG(5,q)$ then it is easy to see that the dimension of a scattered subspace of $\PG(5,q)$ w.r.t. $D$ cannot exceed 3 (a solid). Also any such spread allows a scattered line (trivial), and a scattered plane (Theorem \ref{general lower bound}). On the other hand, the existence of a scattered solid depends on the spread. We will see that a Desarguesian spread does not allow scattered solids (by Theorem \ref{general upper bound}), but there are spreads which do (by Theorem \ref{scattering spread}).
\end{enumerate}
\end{example}

\subsection{Maximally vs maximum scattered}\label{lavrauw_section:max_vs_max}
It is important to observe the distinction between the following two definitions. A subspace $U$ is called {\em maximally scattered} w.r.t. a spread $D$ if $U$ is not contained in a larger scattered space.
A subspace $U$ is called {\em maximum scattered} w.r.t. a spread $D$ if any scattered space $T$ w.r.t. $D$ satisfies $\dim T \leq \dim U$. As we will see in the following example, there exist maximally scattered spaces which are not maximum scattered.
\begin{example}
Consider the irreducible polynomial $f(x)=x^6+x^4+x^3+x+1\in \F_2[x]$, put $\F_{2^6}=\F_2[x]/(f(x))$ and consider 
the set of subspaces 
$D=\{S_{u,v}~:~ u,v \in \F_2^6\}$, where $S_{u,v}=\langle (M^k u, M^k v)~:~k\in \{0,1,\ldots, 5\}\rangle$ of $V(12,2)$, where $M$ is the companion matrix of $f(x)$. This is the standard construction (by field reduction see Section \ref{subsec:linear_sets}) of a Desarguesian spread, in this case a Desarguesian $6$-spread in $V(12,2)$. The subspace $U_5$ spanned by the rows of the matrix
\begin{displaymath}
\left [
\begin{array}{cccccccccccc}
 1 & 0 & 0 & 0 & 0 & 0 & 0 & 0 & 0 &1 &1 & 0 \\
 0 &1 & 0 & 0 & 0 & 0 & 0 & 0 &1 & 0 & 0 & 0 \\
  0 & 0 &1 & 0 & 0 & 0 & 0 & 0 & 0 & 0 &1 &1 \\
  0 & 0 & 0 &1 & 0 &1 & 0 & 0 & 0 & 0 & 0 & 0 \\
  0 & 0 & 0 & 0 &1 &1 &1 &1 &1 &1 &1 & 0 \\
\end{array}
\right ]
\end{displaymath}
is a maximally scattered 5-dimensional subspace of $V(12,2)$. If $\varphi: \F_{2^6}\rightarrow \F_2^6$ is any vector space isomorphism, then the subspace $U_6:=\{(\varphi(\alpha),\varphi(\alpha^2))~:~ \alpha \in \F_2^6\}$ is maximum scattered. It follows that $U_5$ is maximally scattered but not maximum scattered.
This example was constructed using the GAP-package FinInG (see \cite{gap}, \cite{fining}). 
\end{example}

\subsection{A lower bound on the dimension of maximally scattered spaces}\label{lavrauw_section:lower}
It is obvious that every line of a projective space, which is not contained in an element of a spread $D$, is scattered with respect to $D$, so a maximally scattered space has vector dimension at least $2$. The following theorem gives a lower bound on the dimension of a maximally scattered space in terms of the dimension of the space and the dimension of the spread elements. Its proof (see \cite{BlLa2000}) is purely combinatorial and gives a method to extend a scattered space in case the bound is not attained.
\begin{theorem}\label{general lower bound}{\rm \cite[Theorem 2.1]{BlLa2000}}\\
If $U$ is a maximally scattered subspace w.r.t. a $t$-spread in $V(rt,q)$, then $\dim U \geq \lceil (rt-t)/2\rceil +1$.
\end{theorem}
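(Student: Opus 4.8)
The plan is to argue by contraposition, which also produces the extension method alluded to before the statement: I will show that if $U$ is scattered with $\dim U \le \lceil (rt-t)/2\rceil$, then $U$ is contained in a strictly larger scattered space, hence not maximally scattered. Throughout write $k = \dim U$. Since $U$ is scattered, each spread element meets $U$ in either $\{0\}$ or a $1$-dimensional subspace; partitioning the $q^k-1$ nonzero vectors of $U$ according to the (unique) spread element containing each of them shows that every element of $\B(U)$ accounts for exactly $q-1$ of them, so $|\B(U)| = (q^k-1)/(q-1)$.

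The first key step is a local criterion for when an extension remains scattered. Fix $w \notin U$ and a spread element $S$, and consider the linear map $\langle U, w\rangle \to \fq$ sending $u + \lambda w \mapsto \lambda$; its kernel is $U$, so on $\langle U,w\rangle \cap S$ it has kernel $U \cap S$, whence $\dim(\langle U, w\rangle \cap S) \le \dim(U\cap S) + 1$. One then checks that $\dim(\langle U,w\rangle\cap S) \ge 2$ occurs exactly when $S\cap U$ is a point \emph{and} the map above is onto on the intersection, i.e.\ when $w \in U + S$. In particular, spread elements disjoint from $U$ never obstruct an extension, and $\langle U, w\rangle$ is scattered if and only if $w \notin \bigcup_{S \in \B(U)}(U+S)$.

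With this criterion the problem becomes a covering estimate: $U$ fails to extend (i.e.\ is maximally scattered) precisely when $V = \bigcup_{S \in \B(U)}(U+S)$. For $S \in \B(U)$ we have $\dim(U+S) = k + t - 1$, so each such subspace contributes $q^{k+t-1} - q^k$ vectors outside $U$, and the union bound forces
\begin{displaymath}
|V| \;\le\; q^k + |\B(U)|\,(q^{k+t-1} - q^k) \;=\; q^k + \frac{q^k-1}{q-1}\,(q^{k+t-1} - q^k).
\end{displaymath}
If this inequality fails, then $\bigcup_{S}(U+S) \ne V$ and any $w$ in the complement yields a scattered extension $\langle U, w\rangle$. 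It therefore remains to check that the right-hand side is strictly smaller than $q^{rt}$ whenever $k \le \lceil (rt-t)/2\rceil$; as the bound is increasing in $k$, it suffices to treat the borderline value $k = \lceil (rt-t)/2\rceil$.

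This last estimate is where the real work (and the only genuine obstacle) lies. When $rt-t$ is even, the crude bounds $(q^k-1)/(q-1) < q^k$ and $q^{t-1}-1 < q^{t-1}$ already give right-hand side $< q^k + q^{2k+t-1} \le q^{rt}$, using $2k + t - 1 < rt$. When $rt - t$ is odd one has $2k + t - 1 = rt$ at the borderline, so these crude estimates are off by exactly one and must be sharpened: substituting $k = (rt-t+1)/2$ and clearing denominators reduces the desired strict inequality to $(q-2)q^{2k+t-1} + q^{2k} + q^{k+t-1} - q^{k+1} > 0$, which holds for all $q \ge 2$ and $t \ge 2$. In either parity the covering fails below the claimed threshold, an extending vector exists, and the contrapositive is complete. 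I expect the bookkeeping in the odd case to be the delicate point: the naive union bound only yields $\dim U \ge (rt-t+1)/2$, and the extra $+1$ has to be squeezed out of the exact counting rather than from order-of-magnitude estimates.
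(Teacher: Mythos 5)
Your argument is correct, and it is essentially the proof of \cite[Theorem 2.1]{BlLa2000} that the survey cites: the paper only describes that proof as ``purely combinatorial'' and as giving an extension method when the bound is not attained, which is precisely your covering criterion $w \notin \bigcup_{S\in\B(U)}(U+S)$ followed by the union-bound count. Your identification of the odd case of $rt-t$ as the place where the crude estimate must be sharpened, and the resulting inequality $(q-2)q^{2k+t-1}+q^{2k}+q^{k+t-1}-q^{k+1}>0$, both check out.
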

Theorem \ref{general lower bound} implies the existence of a scattered plane w.r.t. a plane spread in $\PG(5,q)$, answering one of the questions from Example \ref{lavrauw_example:easy}.

\subsection{An upper bound on the dimension of scattered spaces}\label{lavrauw_section:upper}

Let $S$ be a $(t-1)$-spread in $\mathrm{PG}(rt-1,q)$. The number of
spread elements is $(q^{rt}-1)/(q^t -1)$ $ = q^{(r-1)t} +
q^{(r-2)t}+ \ldots + q^t +1$. Since a scattered subspace can
contain at most one point of every spread element, the number of
points in a scattered space must be less than or equal to the
number of spread elements. This gives the following trivial upper
bound.
\begin{theorem}\label{general upper bound}{\rm \cite[Theorem 3.1]{BlLa2000}}\\
If $U$ is scattered w.r.t. a $t$-spread in $V(rt,q)$, then $\dim U \leq rt-t$.
\end{theorem}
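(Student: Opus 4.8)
The proof is a counting argument, as foreshadowed in the paragraph preceding the statement; the plan is simply to compare the number of points of $U$ with the number of spread elements.

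First I would pass to the projective picture, letting $S$ denote the $(t-1)$-spread of $\PG(rt-1,q)$ associated to the given $t$-spread of $V(rt,q)$. Since $S$ partitions the point set, every point of $U$ lies in a unique spread element, so there is a well-defined map from the points of $U$ to $S$ sending each point to the element of $S$ containing it. The scattered hypothesis says precisely that each spread element meets $U$ in at most one point, i.e.\ that this map is injective; in fact it induces a bijection between the points of $U$ and $\B_S(U)$. Hence the number of points of $U$ is at most $|S|$. Writing $k=\dim U$ for the vector dimension, $U$ contains $(q^k-1)/(q-1)$ points while $|S|=(q^{rt}-1)/(q^t-1)$, so
\[
\frac{q^k-1}{q-1}\;\leq\;\frac{q^{rt}-1}{q^t-1}.
\]

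The only remaining step, and the sole place requiring computation, is to extract the clean bound $k\leq rt-t$ from this inequality. I would argue by contradiction: since the left-hand side is strictly increasing in $k$, it suffices to derive a contradiction in the boundary case $k=rt-t+1$. Clearing the (positive) denominators reduces matters to comparing $(q^{rt-t+1}-1)(q^t-1)$ with $(q^{rt}-1)(q-1)$, and expanding and regrouping shows their difference factors as $(q^{t-1}-1)(q^{rt-t+1}-q)$. The hard part is nothing more than spotting this factorization, after which the conclusion is immediate: for a genuine spread ($t\geq2$) of a nontrivial ambient space ($r\geq2$) both factors are strictly positive, so the number of points of $U$ would strictly exceed $|S|$, contradicting injectivity. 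Hence $k\leq rt-t$. It is worth flagging that the hypothesis $t\geq2$ is essential here: when $t=1$ the spread consists of single points, every subspace is scattered, the displayed difference vanishes identically, and the stated bound fails — so the theorem is really a statement about spreads of positive-dimensional subspaces.
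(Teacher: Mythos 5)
Your proof is correct and follows exactly the paper's argument: the paragraph preceding the theorem derives the bound by observing that a scattered subspace has at most as many points as there are spread elements, which is precisely your injection-plus-counting argument (you merely carry out the final algebra explicitly, and your factorization checks out). Your remark that $t\geq 2$ is implicitly needed is also consistent with the paper, which notes just after the theorem that the trivial cases $t=1$ or $r=1$ are excluded.
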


Note that for a line spread in $\mathrm{PG}(3,q)$ the upper and lower
bound coincide. But this is quite exceptional. In fact, excluding trivial cases 
we may assume that $t$ and $r$ are both at least 2, and it
follows that the exact dimension of a maximum
scattered space is determined by the lower and upper bounds only for $(r,t) \in \{ (2,2),(2,3)\}$, i.e., a line
spread in $\mathrm{PG}(3,q)$ and a plane spread in $\mathrm{PG}(5,q)$. 
The projective dimension of a maximum scattered space in these cases is respectively 1, the
dimension of a line, and 2, the dimension of a plane. There is a
large variety of spreads and there is not much one can say
about the possible dimension of a scattered subspace with respect
to an arbitrary spread (see Section \ref{subsec:scattering spreads}). This is one of the reasons to consider scattered
spaces with respect to a Desarguesian spread (see Section \ref{subsec:desarguesian spreads}). Another reason is
the correspondence between the elements of a Desarguesian spread
and the points of a projective space over an extension field (so-called {\em field reduction}, see Section \ref{subsec:linear_sets}). 
However before we proceed, in the following section we show that the upper bound from 
Theorem \ref{general upper bound} cannot be improved without restrictions on the spread.

\subsection{Scattering spreads with respect to a subspace}\label{subsec:scattering spreads}
A spread $D$ is called {\it a scattering spread with respect to a subspace} $U$, if this subspace $U$ is scattered
with respect to the spread $D$.

\begin{theorem}\label{scattering spread}{\rm \cite[Theorem
3.2]{BlLa2000}}\\
If $W$ is an $(rt-t-1)$-dimensional subspace of $\mathrm{PG}(rt-1,q)$, $r\geq 2$,
then there exists a  scattering $(t-1)$-spread $\mathcal S$ with respect to $W$.
\end{theorem}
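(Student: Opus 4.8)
The plan is to reformulate the statement linearly and then build the spread explicitly. Write $V=V(rt,q)$ and let $W$ have vector dimension $rt-t=(r-1)t$, so that $W$ has codimension $t$; this is exactly the extremal dimension permitted by Theorem \ref{general upper bound}, so the assertion is really a sharpness statement for that bound. A $(t-1)$-spread element is a $t$-dimensional subspace $E$, and $W$ is scattered with respect to the spread precisely when $\dim(E\cap W)\le 1$ for every $E$. Fixing a complement $C$ of $W$ (so $\dim C=t$ and $V=W\oplus C$), this is equivalent to asking that the image of each $E$ in $C\cong V/W$ have dimension at least $t-1$. A first useful reduction is that it suffices to construct a scattering spread for one convenient subspace $W_0$ of dimension $(r-1)t$: since $\mathrm{GL}(rt,q)$ is transitive on subspaces of a fixed dimension and the image of a spread under a collineation is again a spread, a scattering spread for $W_0$ produces one for any $W=W_0^{\,g}$ by applying $g$.

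For $r=2$ the construction can be written down. Take $V=\F_{q^t}\oplus\F_{q^t}$ and $W_0=\{(x,0):x\in\F_{q^t}\}$, fix $\gamma\in\F_{q^t}^{\times}$, and define $E_c=\{(x,\,cx+\gamma x^{q}):x\in\F_{q^t}\}$ for $c\in\F_{q^t}$, together with the vertical $\{(0,y):y\in\F_{q^t}\}$. Since the difference of the defining maps is $x\mapsto(c-c')x$, which is nonsingular for $c\ne c'$, these $q^t+1$ subspaces form a $t$-spread. Moreover $E_c\cap W_0$ is the kernel of $x\mapsto cx+\gamma x^q$, whose nonzero elements satisfy $x^{q-1}=-c/\gamma$ and hence lie in a single coset of $\F_q^{\times}$; thus $\dim(E_c\cap W_0)\le 1$, and $W_0$ is scattered. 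This settles the case $r=2$ for all $t$, and it is the mechanism I would try to propagate.

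For general $r$ I would induct, peeling off a $t$-dimensional complement at a time. Decompose $V=V'\oplus C$ with $\dim V'=(r-1)t$ and $\dim C=t$, and take $W_0=W'\oplus C$ where $W'\le V'$ has codimension $t$ in $V'$; by induction there is a $t$-spread $\mathcal{S}'$ of $V'$ scattering with respect to $W'$. Assemble the spread of $V$ from two families. On the slice $C=0$ keep the elements of $\mathcal{S}'$ (inside $V'\oplus 0$); each meets $W_0$ in $E\cap W'$, of dimension at most $1$. The remaining points, those with nonzero $C$-coordinate, are covered by graphs $E_g=\{(g(c),c):c\in C\}$ with $g$ ranging over a family $\mathcal{G}$ of $\F_q$-linear maps $C\to V'$. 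These graphs partition the region with nonzero $C$-coordinate exactly when the differences $g-g'$ are injective for $g\ne g'$, and $E_g$ meets $W_0=W'\oplus C$ in a subspace of dimension $\dim\ker g+\dim(\mathrm{im}\,g\cap W')$, which must be at most $1$. Thus everything reduces to producing, at each level, a set $\mathcal{G}$ of $q^{(r-1)t}$ linear maps with pairwise injective differences and with $\dim\ker g+\dim(\mathrm{im}\,g\cap W')\le 1$ for every $g$.

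The base case $W'=0$ of this sub-problem is just the $r=2$ construction (there one only needs $\dim\ker g\le 1$, supplied by the twist $\gamma x^q$), so the main obstacle is the inductive step, when $W'\ne 0$. Here one must simultaneously satisfy a rank-distance condition on the differences (a maximum-rank-distance requirement guaranteeing a genuine partition) and force each image $\mathrm{im}\,g$ to meet the fixed subspace $W'$ in at most a line. The naive choice $g_\beta(c)=\beta c$ fails, since its images sweep out the Desarguesian $t$-spread of $V'$, with respect to which a codimension-$t$ subspace can never be scattered; so one is forced to introduce a twist analogous to $\gamma x^q$ but now tuned to the images rather than the kernels, for instance through an adjoint (transpose) variant of the base construction. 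Controlling images and the partition property at once is the delicate point; by contrast the reformulation and the transitivity reduction are routine, and the $r=2$ case already exhibits the essential device.
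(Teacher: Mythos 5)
Your reduction to a linear normal form, the transitivity argument, and the explicit $r=2$ spread $\{E_c\}_{c\in\F_{q^t}}\cup\{(0,y):y\in\F_{q^t}\}$ are all correct: the differences $(c-c')x$ give pairwise trivial intersections, and the zero set of $x\mapsto cx+\gamma x^q$ is either $\{0\}$ or $\{0\}\cup x_0\F_q^{*}$, hence an $\F_q$-subspace of dimension at most one. The inductive framework for $r\ge 3$ is also set up correctly: writing $V=V'\oplus C$ and $W_0=W'\oplus C$, keeping an inductively obtained scattering spread $\mathcal{S}'$ of $V'$ on the slice $C=0$, and covering the remaining vectors by graphs $E_g$, the two conditions you isolate (pairwise injective differences, and $\dim g^{-1}(W')=\dim\ker g+\dim(\mathrm{im}\,g\cap W')\le 1$) are exactly what is needed. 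But the proof stops there: the family $\mathcal{G}$ satisfying these two conditions when $W'\neq 0$ is never produced, and you explicitly flag it as the main obstacle. As written this is a genuine gap --- the theorem is precisely the assertion that such a configuration exists, and the only hard case ($r\ge 3$, where by Theorem \ref{Desarguesian upper bound} a Desarguesian spread provably cannot do the job) is the one left open. (For the record, the survey gives no proof of this statement; it is quoted from \cite{BlLa2000}, whose argument is likewise an explicit construction.)

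The gap is closable with the very device you already used, and more simply than your closing remarks suggest: no adjoint or image-controlling variant is needed. Identify $V'=\F_{q^t}^{\,r-1}$, $C=\F_{q^t}$, normalise $W'=\F_{q^t}^{\,r-2}\times\{0\}$, and take $\mathcal{G}=\{g_{a,b}\;:\;a=(a_1,\dots,a_{r-2})\in\F_{q^t}^{\,r-2},\ b\in\F_{q^t}\}$ where $g_{a,b}(x)=(a_1x,\dots,a_{r-2}x,\,bx+\gamma x^q)$. This has the required cardinality $q^{(r-1)t}$; the twist $\gamma x^q$ cancels in every difference, so $g_{a,b}-g_{a',b'}$ is coordinatewise multiplication by the nonzero tuple $(a-a',b-b')$ and is injective, exactly as in the Desarguesian case; and $g_{a,b}^{-1}(W')$ is the solution set of $bx+\gamma x^q=0$, which is at most one-dimensional by the same $x^{q-1}$ computation as in your base case. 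The point you half-missed is that the relevant quantity is $g^{-1}(W')$ itself, so the twist should simply be placed in the coordinate of $V'$ complementary to $W'$ rather than being ``tuned to the images''; with this choice your induction closes and the argument is complete.
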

This theorem shows that there is no room for improvement of Theorem \ref{general upper bound} without assuming some extra properties on the spread.
Up to now, the only spreads that have been investigated in detail are the Desarguesian spreads (see Section \ref{subsec:desarguesian spreads}).

\subsection{Scattered spaces w.r.t. Desarguesian spreads}\label{subsec:desarguesian spreads}
Let $S$ be a $(t-1)$-spread of $\PG(rt-1,q)$ and consider the following incidence structure. First embed $\PG(rt-1,q)$ as a hyperplane $H$ in $\Sigma=\PG(rt,q)$. Denote the set of points of $\Sigma \setminus H$ by $\mathcal P$ and
the set of $t$-dimensional subspaces of $\Sigma$ which intersect $H$ in an element of $S$ by $\mathcal L$. Define an incidence relation $\mathcal I$ on $({\mathcal{P}}\times {\mathcal{L}}) \cup ({\mathcal{L}}\times {\mathcal{P}})$ by symmetric containment. Then the incidence structure ${\mathcal{D}}(S)=({\mathcal{P}},{\mathcal{L}},{\mathcal{I}})$ is a design with parallelism, also called {\em Sperner space} or {\em S-space} (see e.g. \cite{BaCo1974}). More precisely,
${\mathcal{D}}(S)$ is a $2-(q^{rt},q^t,1)$ design such that for each anti-flag $(x,U)$, there exists exactly one element of $\mathcal L$ which is incident with $x$ and parallel to $U$ (two elements of $\mathcal L$ are called {\em parallel} if their intersection is an element of $S$).
Moreover, the design ${\mathcal{D}}(S)$ is a Desarguesian affine space if and only if $S$ is a Desarguesian spread (see \cite[Theorem 2]{BaCo1974}). This correspondence is crucial in the proof of  the following theorem 
and is of central importance for many of the applications of scattered spaces.
We remind the reader that ${\mathcal{D}}_{r,t,q}$ denotes the Desarguesian $t$-spread in $V(rt,q)$.
\begin{theorem}\label{Desarguesian upper bound}
{\rm \cite[Theorem 4.3]{BlLa2000}}\\
If $U$ is a scattered subspace w.r.t. ${\mathcal{D}}_{r,t,q}$, then
$\dim U \leq rt/2$.
\end{theorem}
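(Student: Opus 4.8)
The plan is to exploit the defining field-reduction description of the Desarguesian spread, which endows the ambient space with the extra structure of a vector space over the extension field, and then to run a one-line dimension count. Using the isomorphism $V(rt,q)\cong V(r,q^t)$ from the construction of $\D_{r,t,q}$, I would identify $U$ with an $\F_q$-subspace of $V(r,q^t)$ and record that each spread element has the form $\F_{q^t}v=\{\lambda v:\lambda\in\F_{q^t}\}$, an $\F_q$-subspace of dimension $t$. In this language the scattered condition reads: for every nonzero $v$, the intersection $U\cap \F_{q^t}v$ has $\F_q$-dimension at most $1$. The whole point of passing to $\F_{q^t}$ is that, unlike for an arbitrary spread, multiplication by a fixed scalar $\alpha\in\F_{q^t}$ is an $\F_q$-linear bijection of $V(r,q^t)$ that fixes \emph{every} spread element setwise, since $\alpha\,\F_{q^t}v=\F_{q^t}v$.

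The heart of the argument is then the following. Set $k=\dim_{\F_q}U$ and choose $\alpha\in\F_{q^t}\setminus\F_q$, which exists because we are in the nontrivial case $t\geq 2$. Since $v\mapsto\alpha v$ is an $\F_q$-linear isomorphism, $\alpha U$ is again a $k$-dimensional $\F_q$-subspace, so $\dim_{\F_q}(U\cap\alpha U)\geq 2k-rt$. Suppose, for contradiction, that $k>rt/2$; then $U\cap\alpha U$ contains a nonzero vector $u$, which I may write as $u=\alpha w$ with $w\in U$. Both $u$ and $w=\alpha^{-1}u$ then lie in $U$ and in the single spread element $\F_{q^t}u$, so the scattered hypothesis forces $u$ and $w$ to be $\F_q$-proportional; but $w=\alpha^{-1}u$ with $\alpha^{-1}\notin\F_q$ makes $u$ and $w$ $\F_q$-independent, a contradiction. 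Hence $k\leq rt/2$.

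The step I expect to carry all the weight is the recognition that Desarguesian-ness is exactly what supplies the scalar action of $\F_{q^t}$; everything after that is bookkeeping. This is also where the contrast with the general situation becomes transparent: a non-Desarguesian spread carries no such field, which is consistent with Theorem \ref{scattering spread} showing that the weaker bound $\dim U\leq rt-t$ of Theorem \ref{general upper bound} cannot be improved without restricting the spread. As a sanity check I would confirm the extremal small cases $(r,t)\in\{(2,2),(2,3)\}$, where $rt/2$ agrees with the trivial upper bound, and the case $(r,t)=(3,2)$, where the new bound $\dim U\leq 3$ indeed rules out the scattered solids permitted by Theorem \ref{general upper bound}. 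Finally, I note that the same phenomenon can be read geometrically inside the affine space $\D(S)$ emphasised above, by reinterpreting the intersection $U\cap\alpha U$ as an incidence statement in $\mathrm{AG}(r,q^t)$; but the scalar-multiplication formulation seems to me the most economical route.
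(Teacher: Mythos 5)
Your argument is correct and is essentially the proof of \cite[Theorem 4.3]{BlLa2000} to which the paper defers: the Desarguesian hypothesis is used precisely to identify $V(rt,q)$ with $\F_{q^t}^r$ so that the spread elements become the sets $\F_{q^t}v$ (this is the ``crucial correspondence'' the survey alludes to), and scatteredness then forces $U\cap\alpha U=\{0\}$ for every $\alpha\in\F_{q^t}\setminus\F_q$, whence $2\dim U\le rt$ by the dimension formula. Your aside that $t\ge 2$ is needed to choose such an $\alpha$ is the right caveat, since for $t=1$ the bound fails and the paper excludes that trivial case.
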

It was also shown in \cite{BlLa2000} that this upper bound is tight whenever $r$ is even.
\begin{theorem}{\rm \cite{BlLa2000}}
If $r$ is even, then there exists a scattered subspace w.r.t. ${\mathcal{D}}_{r,t,q}$ in $V(rt,q)$ of dimension $rt/2$.
\end{theorem}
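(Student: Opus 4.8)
The plan is to exhibit an explicit $\F_q$-subspace of the required dimension and to verify directly that it is scattered; tightness then follows, since Theorem~\ref{Desarguesian upper bound} already supplies the bound $\dim U \le rt/2$. I would work throughout in the field-reduction model, identifying $V(rt,q)$ with $\F_{q^t}^r$ as $\F_q$-vector spaces via the isomorphism $\varphi$ used to define $\D_{r,t,q}$, so that the spread elements become the $\F_{q^t}$-lines $\langle v\rangle_{\F_{q^t}}$ with $v\in\F_{q^t}^r\setminus\{0\}$. In this model an $\F_q$-subspace $U$ is scattered precisely when $\dim_{\F_q}(U\cap\langle v\rangle_{\F_{q^t}})\le 1$ for every nonzero $v$. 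Writing $r=2m$, I propose the candidate
$$U=\{(\alpha_1,\alpha_1^q,\alpha_2,\alpha_2^q,\dots,\alpha_m,\alpha_m^q)~:~\alpha_1,\dots,\alpha_m\in\F_{q^t}\},$$
the graph of the $\F_q$-linear Frobenius map applied blockwise. This is the $m$-fold analogue of $\{(\alpha,\alpha^q)\}$, which is the $r=2$ case and matches the maximum scattered example recorded earlier in the paper.

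The dimension count is routine: $U$ is the image of the $\F_q$-linear map $\F_{q^t}^m\to\F_{q^t}^{2m}$ sending $(\alpha_1,\dots,\alpha_m)$ to $(\alpha_1,\alpha_1^q,\dots,\alpha_m,\alpha_m^q)$, and this map is injective since the odd coordinates already recover the input. Hence $\dim_{\F_q}U=mt=rt/2$, as required.

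The substance lies in checking that $U$ is scattered, and this is the only step needing care. Suppose two nonzero vectors $u=(\alpha_i,\alpha_i^q)_i$ and $u'=(\beta_i,\beta_i^q)_i$ of $U$ lie in a common spread element $\langle v\rangle_{\F_{q^t}}$. Then $u$ and $u'$ are $\F_{q^t}$-proportional, so $u'=cu$ for some $c\in\F_{q^t}^{*}$, i.e.\ $\beta_i=c\alpha_i$ and $\beta_i^q=c\alpha_i^q$ for all $i$. Applying Frobenius to the first relation gives $\beta_i^q=(c\alpha_i)^q=c^q\alpha_i^q$, which combined with the second relation yields $(c^q-c)\alpha_i^q=0$ for every $i$. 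Since $u\neq 0$ some $\alpha_i\neq 0$, forcing $c^q=c$ and hence $c\in\F_q$. Thus $u$ and $u'$ are already $\F_q$-proportional, so $U\cap\langle v\rangle_{\F_{q^t}}$ is at most $1$-dimensional over $\F_q$, and $U$ is scattered.

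I would flag one conceptual point rather than a genuine difficulty. Although $U$ is a direct sum of $m$ copies of the $r=2$ construction, scatteredness of the summands does \emph{not} by itself give scatteredness of $U$, because a spread element $\langle v\rangle_{\F_{q^t}}$ may have support spread across several blocks. The argument above sidesteps this by exploiting that $\F_{q^t}$-proportionality is a global condition: the single scalar $c$ must act simultaneously in every block, and the identity $c^q=c\Rightarrow c\in\F_q$ then closes the argument uniformly. No restriction on $t$ is needed, and together with Theorem~\ref{Desarguesian upper bound} this shows that the bound $rt/2$ is attained whenever $r$ is even.
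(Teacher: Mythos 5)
Your proof is correct and complete: the block-Frobenius subspace $U=\{(\alpha_1,\alpha_1^q,\dots,\alpha_m,\alpha_m^q)\}$ has $\F_q$-dimension $mt=rt/2$, and your argument that an $\F_{q^t}$-proportionality scalar $c$ between two nonzero vectors of $U$ must satisfy $c^q=c$, hence lie in $\F_q$, is exactly what is needed. The survey itself gives no proof (it only cites the original reference), but your construction is essentially the one used there --- where it is assembled inductively, adjoining one Frobenius-graph block at a time --- and your closing remark that scatteredness of the summands does not automatically transfer to the direct sum is precisely the point that inductive step has to address; your single global verification handles it in one stroke.
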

For $r$ odd, the exact dimension of a maximum scattered space is in general not known.
The following theorem gives a lower bound on the dimension of a maximum scattered space.
\begin{theorem}\label{thm:scattered_existence}{\rm\cite{BlLa2000}}
The dimension of a maximum scattered subspace w.r.t. ${\mathcal{D}}_{r,t,q}$ in $V(rt,q)$ is at least $r'k$ where
$r' | r$, $(r',t)=1$, and $r'k$ is maximal such that
$$
k<(rt-t+3)/2~\mbox{for $q=2$ and $r'=1$}
$$
and 
$$
r'k<(rt-t+r'+3)/2~\mbox{otherwise.}
$$
\end{theorem}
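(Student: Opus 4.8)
The plan is to pass to the field-reduction picture and work inside $V(r,q^t)=\F_{q^t}^r$, where the Desarguesian spread $\mathcal{D}_{r,t,q}$ becomes the set of $1$-dimensional $\F_{q^t}$-subspaces. In this language an $\F_q$-subspace $U$ is scattered precisely when $\dim_{\F_q}(U\cap \ell)\le 1$ for every $\F_{q^t}$-line $\ell$, i.e. when the associated $\F_q$-linear set in $\PG(r-1,q^t)$ has maximal rank. Two elementary reductions drive the argument. First, a \emph{direct-sum lemma}: if $U_i\subseteq \F_{q^t}^{r_i}$ are scattered for $i=1,2$, then $U_1\oplus U_2$ is scattered in $\F_{q^t}^{r_1+r_2}$, because two proportional nonzero elements $\lambda(v_1,v_2),\mu(v_1,v_2)$ of $U_1\oplus U_2$ with $v_1\neq 0$ yield $\lambda v_1,\mu v_1\in U_1\cap \langle v_1\rangle_{\F_{q^t}}$, whence $\mu/\lambda\in\F_q$ by scatteredness of $U_1$. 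Second, it suffices to produce, \emph{for each admissible divisor $r'$} (that is, $r'\mid r$ with $\gcd(r',t)=1$), a scattered subspace of the stated dimension $r'k$; the theorem then follows by retaining the best such $r'$.

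For the base divisor $r'=1$ the required bound is already supplied by Theorem \ref{general lower bound}: a maximally scattered space has $\F_q$-dimension at least $\lceil (rt-t)/2\rceil+1$, and since a maximally scattered space is in particular scattered, a maximum scattered space has at least this dimension. A short check shows that $\lceil (rt-t)/2\rceil+1$ equals the largest $k$ admissible from $r'k<(rt-t+r'+3)/2$ when $q\neq 2$, and comfortably exceeds the slightly smaller value allowed in the exceptional line $q=2,\ r'=1$. Hence the whole case $r'=1$ requires no new construction.

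For a general admissible $r'>1$ I would exploit $\gcd(r',t)=1$ through the compositum $\F_{q^{r't}}=\F_{q^{r'}}\otimes_{\F_q}\F_{q^t}$, in which $\F_{q^{r'}}\cap \F_{q^t}=\F_q$. Writing $r=r'm$ and identifying $\F_{q^t}^{r'}\cong \F_{q^{r't}}$ as $\F_{q^t}$-spaces, I decompose $V(r,q^t)$ into $m$ such blocks. In each block I build a scattered $\F_q$-subspace of $\F_{q^{r't}}$ as the graph of a linearized (Frobenius-type) map — the natural generalization of the rank-$t$ pseudoregulus $\{x+x^q\omega : x\in\F_{q^t}\}$ used for $r=2$ — chosen so that its $\F_q$-dimension is as large as the counting permits, and then assemble the global subspace by the direct-sum lemma, filling the remaining degrees of freedom to reach dimension exactly $r'k$.

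The crux is the scatteredness of the blockwise construction. For a graph of a linearized map, an $\F_{q^t}$-proportionality translates into a condition forcing the scalar $\lambda$ to fix the defining Frobenius data, which confines $\lambda$ to the fixed field of a power of Frobenius; the role of $\gcd(r',t)=1$ is exactly to force this fixed field, intersected with $\F_{q^t}$, down to $\F_q$, so that no $\F_{q^t}$-line is met in more than a point. The main obstacle I anticipate is twofold: selecting the linearized map (equivalently, the exponents) so that this fixed-field computation genuinely collapses to $\F_q$ while the dimension count delivers precisely the maximal $k$ with $r'k<(rt-t+r'+3)/2$; and pinning down why the characteristic-two, $r'=1$ case loses a unit, which I expect to trace back to the parity in the greedy extension underlying Theorem \ref{general lower bound} rather than to the field construction itself.
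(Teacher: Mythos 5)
The survey itself contains no proof of this statement (it is quoted from \cite{BlLa2000}), so I will measure your proposal against what the result requires and against the argument in the cited source. Two ingredients of your plan are sound: the reduction of the case $r'=1$ to Theorem \ref{general lower bound} (your arithmetic check is correct), and the direct-sum lemma, which is true and is indeed the mechanism behind the dimension-$rt/2$ construction for $r$ even. The genuine gap is the case $r'>1$, which is the only case in which the theorem says anything beyond Theorem \ref{general lower bound} (e.g.\ for $r=3$, $t=4$ the divisor $r'=3$ raises the bound from $5$ to $6$). There you supply no construction at all, only the intention to find ``a graph of a linearized map'' in each block $\F_{q^{r't}}$ of the right dimension, and you yourself flag this as the crux. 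This is not a repairable detail: exhibiting explicit scattered subspaces of $\F_{q^t}^{r'}$ of dimension near $r't/2$ for $r'$ odd is precisely the hard problem that occupies \cite{BaBlLa2000} and \cite{BaGiMaPoPrep} and that remains open for some parameters, and the theorem is designed to sidestep it. Moreover the blockwise strategy is structurally capped: with $r=6$, $t=5$, $r'=3$ the theorem's target is the largest multiple of $3$ below $(30-5+3+3)/2=15.5$, namely $15$, while your construction is a direct sum of two scattered subspaces of $\F_{q^5}^3$, each of dimension at most $\lfloor 15/2\rfloor=7$ by Theorem \ref{Desarguesian upper bound}, hence of total dimension at most $14$; and when $r'=r$ there is a single block, so the decomposition buys nothing and you are back at the original problem.

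The actual proof in \cite{BlLa2000} is non-constructive. One works with subspaces that are additionally $\F_{q^{r'}}$-modules (via $V(rt,q)\cong \F_{q^{r't}}^{r/r'}$) and extends a scattered such subspace one $\F_{q^{r'}}$-dimension at a time; a counting argument bounds the set of vectors $w$ for which $W\oplus\langle w\rangle_{\F_{q^{r'}}}$ fails to be scattered with respect to $\D_{r,t,q}$, and shows this bad set cannot cover $V(rt,q)$ while the dimension is below the stated threshold. The hypothesis $(r',t)=1$ enters exactly where your instinct places it, through $\F_{q^{r'}}\cap\F_{q^t}=\F_q$, but its role is to guarantee that every $\F_{q^{r'}}$-point meets each element of the $\F_{q^t}$-spread in at most an $\F_q$-point (so that adjoining one only creates controllably few violations), not to validate a fixed-field computation for a specific linearized polynomial. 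In short: your proposal proves the $r'=1$ case, correctly identifies the role of the coprimality hypothesis, but replaces the counting argument that constitutes the substance of the theorem by an explicit construction that is neither supplied nor, in general, capable of reaching the stated bound.
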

We conclude this section with an overview of the values of $r,t,q$, with $r$ odd and $t$ even, for which maximum scattered spaces w.r.t. $\D_{r,t,q}$ have been constructed.
Note that for $t=2$, the existence of a scattered r-space w.r.t. $\D_{r,2,q}$ easily follows by considering an appropriate maximal subspace lying on the Segre variety $S_{r,2}(q)$, or equivalently a subspace $\F_q^r\otimes v$ ~of ~$\F_q^r\otimes \F_q^2$, for some nonzero vector $v\in \F_q^2$ (see e.g. Section 1.6 and in particular Theorem 1.6.4 of \cite{Lavrauw2001} for more details).
For $t=4$ and $r=3$, a 6-dimensional scattered space w.r.t. $\D_{3,4,q}$ was constructed in \cite{BaBlLa2000} (see more on this in Section \ref{subsec:blocking_sets}). A much more general result (including the construction from \cite{BaBlLa2000}) was recently obtained by Bartoli et al. in \cite{BaGiMaPoPrep}. They constructed scattered linear sets of rank $rt/2$ (see Section \ref{subsec:linear_sets} below for definitions) in $\PG(r-1,q^t)$ for many parameters $r$, $t$ and $q$. As a corollary one obtains the existence of maximum scattered spaces w.r.t. $\D_{r,t,q}$ in the following cases.

\begin{corollary} {\rm (From \cite{BaGiMaPoPrep})}
There exist scattered spaces w.r.t. ${\mathcal{D}}_{r,t,q}$, $t$ even, of dimension $rt/2$ in the following cases: (i) $q=2$ and $t\geq 4$; (ii) $q\geq 2$ and $t\not \equiv 0$ mod $3$; (iii) $q\equiv 1$ mod $3$ and $t \equiv 0$ mod $3$.
\end{corollary}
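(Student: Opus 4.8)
The plan is to derive the corollary directly from the cited construction of Bartoli et al.\ by translating the language of scattered linear sets into that of scattered subspaces via field reduction. First I would recall the dictionary set up in Section~\ref{subsec:linear_sets}: the $\F_q$-isomorphism $\varphi\colon \F_{q^t}^r\to \F_q^{rt}$ used to build $\D_{r,t,q}$ identifies each point $\langle v\rangle_{\F_{q^t}}$ of $\PG(r-1,q^t)$ with the spread element $S_v=\{\varphi(\lambda v):\lambda\in\F_{q^t}\}$, a $t$-dimensional $\F_q$-subspace of $V(rt,q)$. Under this identification an $\F_q$-subspace $U\le V(rt,q)$ determines the linear set $L_U=\{\langle v\rangle_{\F_{q^t}}: S_v\cap U\neq\{0\}\}$, whose rank equals $\dim_{\F_q}U$ and in which the weight of a point $\langle v\rangle_{\F_{q^t}}$ is $\dim_{\F_q}(S_v\cap U)$.

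The key observation is then a tautology once the definitions are unwound. The linear set $L_U$ is scattered, i.e.\ every one of its points has weight $1$, precisely when $\dim_{\F_q}(S_v\cap U)\le 1$ for every nonzero $v$: points with $S_v\cap U=\{0\}$ simply lie outside $L_U$, while points of $L_U$ carry dimension at least one, so the upper bound $\le 1$ forces weight exactly one. But $\dim_{\F_q}(S_v\cap U)\le 1$ for all $v$ is exactly the condition that $U$ meets every spread element in at most a $1$-space, i.e.\ that $U$ is scattered with respect to $\D_{r,t,q}$ in the sense of Section~\ref{lavrauw_section:basics}. Since the rank of $L_U$ coincides with the $\F_q$-dimension of $U$, a scattered linear set of rank $rt/2$ in $\PG(r-1,q^t)$ is literally the same object as a scattered $\F_q$-subspace of dimension $rt/2$ with respect to $\D_{r,t,q}$.

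With the dictionary in place the corollary is immediate. In each of the cases (i)--(iii) the construction of \cite{BaGiMaPoPrep} produces a scattered linear set of rank $rt/2$ in $\PG(r-1,q^t)$; pulling this back through $\varphi$ yields a scattered subspace of dimension $rt/2$ with respect to $\D_{r,t,q}$. That this dimension is the largest possible, so that the subspace is genuinely \emph{maximum} scattered, follows at once from Theorem~\ref{Desarguesian upper bound}, which bounds the dimension of any scattered subspace w.r.t.\ $\D_{r,t,q}$ by $rt/2$. The hypothesis that $t$ be even is exactly what guarantees $rt/2\in\mathbb{Z}$.

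The main obstacle is not on our side at all: the genuine difficulty lies in producing the linear sets, which in \cite{BaGiMaPoPrep} is done by exhibiting suitable $\F_q$-subspaces, typically as graphs of explicit linearized (scattered) maps meeting each $1$-dimensional $\F_{q^t}$-subspace in an $\F_q$-space of dimension at most one, and the case analysis that isolates exactly the ranges (i)--(iii) is carried out there. What requires care in our reduction is only the bookkeeping of the correspondence, namely verifying that ``rank'' matches ``$\F_q$-dimension'' and that ``weight one at every point'' matches ``scattered with respect to the spread,'' together with checking that the stated hypotheses in (i)--(iii) coincide with the parameter ranges in which the cited construction is valid.
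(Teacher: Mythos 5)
Your proposal is correct and follows essentially the same route as the paper: the corollary is obtained by citing the construction of scattered linear sets of rank $rt/2$ from \cite{BaGiMaPoPrep} in the stated parameter ranges and translating it back through the field-reduction dictionary (the proposition in Section~\ref{subsec:linear_sets} identifying scattered linear sets of rank $m$ with scattered subspaces of dimension $m$ w.r.t.\ ${\mathcal{D}}_{r,t,q}$). Your careful unwinding of ``rank equals $\F_q$-dimension'' and ``weight one at every point equals scattered'' is exactly the content the paper leaves implicit.
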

Apart from the computational examples from \cite{BaGiMaPoPrep}, for $t=6$ and $q\in \{3,4,5\}$, the existence of scattered spaces of dimension $rt/2$ w.r.t. $\D_{r,t,q}$, with $r$ odd and $t$ even, remains open for $t\equiv 0 \mod 3$, $q \not \equiv 1 \mod 3$, and $q>2$.

\section{Applications}
\subsection{Translation hyperovals}
We start the section with one of the earliest applications of scattered spaces: hyperovals of translation planes. 
We assume the reader is familiar with the notion of a projective plane of order $q$. All necessary background (and much more) can be found in for instance \cite{Dembowski1968} or \cite{HuPi1973}. 
A {\em hyperoval} in a projective plane of order $q$ is a set $\mathcal H$ of $q+2$ points, no three of which are collinear, i.e. no line of the plane contains three points of $\mathcal H$. The existence of a hyperoval in a projective plane $\pi$ implies the order $q$ of $\pi$ to be even.

Let $\pi$ be a projective plane. A {\em perspectivity} of $\pi$ is a collineation $\alpha$ for which there exists a point-line pair $(x, \ell)$ such that  $\alpha$ fixes each line on $x$ and each point on $\ell$, and $\alpha$ is then also called an {\em $(x,\ell)$-perspectivity}. If $x$ is on $\ell$ then $\alpha$ is called an {\em $(x,\ell)$-elation}, otherwise $\alpha$ is called an {\em $(x,\ell)$-homology}. The point $x$ is the {\em center} and the line $\ell$ is the {\em axis} of $\alpha$.
A {\em translation plane} is a projective plane which contains a line $\ell_\infty$, such that for each point
$x$ on $\ell_\infty$ the group of elations with center $x$ and axis $\ell_\infty$ acts transitively
on the points of $m\setminus \{x\}$ for each line $m$ on $x$.
Equivalently, the automorphism group $G$ of the plane is called {\em $(x,\ell_\infty)$-transitive} for each
point $x$ on $\ell_\infty$.

A hyperoval $\mathcal H$ is called a 
{\em translation hyperoval} if there exists a group $G$ of $q$ elations with common axis $\ell$, fixing $\mathcal H$.
The line $\ell$ is a 2-secant, called the {\em translation line} of $\mathcal H$, and the group $G$ acts transitively on the points of ${\mathcal{H}}\setminus \ell$. Translation hyperovals in the Desarguesian projective plane $\PG(2,q)$ were classified by Payne in 1971 \cite{Payne1971}. For non-Desarguesian projective planes, and in particular for translation planes, the classification remains open. Denniston \cite{Dennistion1979} and Korchm\'aros \cite{Korchmaros1986} constructed translation ovals in non-Desarguesian translation planes, while Jha and Johnson \cite{JhJo1992} proved that for each non-prime integer $N>3$, there exists a non-Desarguesian translation plane of order $2^N$ admitting a translation hyperoval. Note that not every projective plane admits translation hyperovals since some planes (e.g. Figueroa planes) don't have enough translations.

The study of translation planes is equivalent to the study of spreads of a vector space whose dimension is twice
the dimension of the elements of the spread. This correspondence is due to the Andr\'e-Bruck-Bose (ABB) construction of a translation plane from a spread and vice versa. In fact, the ABB-construction is a special case of the construction of the design with parallelism $\D(S)$ mentioned above (with $r=2$). In this case, $\D(S)$ is a $2-(q^{2t},q^t,1)$ design with parallelism, i.e. an affine plane. The corresponding projective plane is denoted by $\pi(S)$. The equivalence between spreads and translation planes was obtained in 1954 by Andr\'e \cite{Andre1954}, using a group-theoretic point of view. The geometric construction as presented above was published by Bruck and Bose \cite{BrBo1964} in 1964. The following theorem shows the equivalence between translation hyperovals in translation planes (sharing the same axis) and scattered spaces, and is comparable to \cite[Theorem 5]{JhJo1992}, although the terminology in \cite{JhJo1992} is quite different.

\begin{theorem}\label{thm:scattered-hyperoval}
A translation plane $\pi(S)$ of order $2^{t}$ with translation line $\ell_\infty$ contains a translation hyperoval with translation line $\ell_\infty$ if and only if the spread $S$ in $V(2t,2)$ admits a scattered space of dimension $t$.
\end{theorem}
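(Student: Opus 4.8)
The plan is to realize both objects inside the Andr\'e--Bruck--Bose model $\D(S)$ with $r=2$ (described in Section~\ref{subsec:desarguesian spreads}) and to translate the ``no three collinear'' condition into intersection conditions between a subspace and the spread elements. First I would fix the setup: embed $\PG(2t-1,2)$ as the hyperplane $H=\ell_\infty$ in $\Sigma=\PG(2t,2)$, identify the affine points $\Sigma\setminus H$ with the vectors of $V(2t,2)$ after choosing an origin, and recall that the affine lines are exactly the cosets $v_0+S_i$ of the spread elements $S_i\in S$, while the points of $\ell_\infty$ are the spread elements themselves. The elations with axis $\ell_\infty$ are precisely the translations $\phi_h\colon v\mapsto v+h$ ($h\in V(2t,2)$), forming the elementary abelian translation group $T$; a useful bookkeeping fact is that the center of $\phi_h$ is the point of $\ell_\infty$ given by the spread element containing $h$.

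With this dictionary the heart of the argument is a single computation: for a subspace $U\le V(2t,2)$ and a spread element $S_i$, the set $U$ meets every coset of $S_i$ in at most $2^{\dim(U\cap S_i)}$ points, and over $\F_2$ two affine points lie in a common coset of $S_i$ exactly when their sum lies in $S_i$. From this I would extract the translation of collinearity: the set $U$ (viewed as affine points) together with two points $S_1,S_2\in\ell_\infty$ is an arc precisely when $U\cap S_1=U\cap S_2=\{0\}$ and $\dim(U\cap S_i)\le 1$ for every other spread element $S_i$. The decisive observation is then a counting argument: the $2^t-1$ nonzero vectors of a scattered $U$ of dimension $t$ lie in $2^t-1$ distinct spread elements (at most one per element, since $\dim(U\cap S_i)\le 1$), so among the $2^t+1$ spread elements exactly two, say $S_1$ and $S_2$, are disjoint from $U$ while each of the others meets $U$ in a single point. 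Hence ``$U$ scattered of dimension $t$'' is equivalent to the displayed arc condition for a canonically determined pair $\{S_1,S_2\}$.

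For the ``if'' direction I would set $\mathcal H=U\cup\{S_1,S_2\}$, where $S_1,S_2$ are the two spread elements disjoint from $U$; the computation above shows every line meets $\mathcal H$ in at most two points, so the $(q+2)$-set $\mathcal H$ is a hyperoval, and $G=\{\phi_h:h\in U\}$ is a group of $2^t$ elations with axis $\ell_\infty$ fixing $\mathcal H$ and acting regularly on $\mathcal H\setminus\ell_\infty=U$, i.e.\ a translation hyperoval with translation line $\ell_\infty$. For the converse I would start from such a hyperoval, use that every elation with axis $\ell_\infty$ lies in $T$ (standard translation-plane theory) to write its translation group as $G=\{\phi_h:h\in U\}$ for a subgroup $U\le V(2t,2)$, which is an $\F_2$-subspace of dimension $t$ since $|G|=2^t$; regularity forces $\mathcal H\setminus\ell_\infty$ to be a coset of $U$, so after a translation we may assume it equals $U$, and the arc condition then yields that $U$ is scattered. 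The step I expect to require the most care is this last matching: verifying that the abstract data (a group of elations fixing $\mathcal H$, transitive on its affine part) really forces the affine part of $\mathcal H$ to be a translate of a $t$-dimensional $\F_2$-subspace, and that the two secant points of $\mathcal H$ on $\ell_\infty$ coincide with the two spread elements the counting argument singles out.
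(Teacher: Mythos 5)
Your proposal is correct and follows essentially the same route as the paper: the forward direction builds the hyperoval as the affine point set of a $t$-space with direction space $U$ together with the two spread elements disjoint from $U$ (the paper phrases this projectively via a $t$-space $K_U$ of $\Sigma^*$ with $K_U\cap\Sigma=U$), and the converse identifies the translation group of the hyperoval with a $t$-dimensional $\F_2$-subspace and checks it is scattered. The only cosmetic difference is in the converse, where the paper argues directly with the groups ($|T_z\cap H|=2$ for $z\notin\{x,y\}$ since a nontrivial elation fixing $\mathcal H$ is an involution) rather than passing through the observation that $\mathcal H\setminus\ell_\infty$ is a coset of $U$; both verifications are sound.
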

\begin{proof}
We prove the first part of the theorem in a projective setting. Let $S$ be a $(t-1)$-spread in $\Sigma=\PG(2t-1,2)$, and $U$ a scattered $(t-1)$-space w.r.t $S$. Embed $\Sigma$ as a hyperplane in $\Sigma^*$ and choose a $t$-dimensional
subspace $K_U$ of $\Sigma^*$ with $K_U\cap \Sigma=U$. Let $\mathcal H$ denote the 
set of points in $K_U\setminus U$ together with the two points (call them $x$ and $y$) at infinity corresponding to the spread elements which are disjoint from $U$. We claim that $\mathcal H$ is a hyperoval of $\pi(S)$. 
To prove this claim, consider a line $\ell$ in $\pi(S)$.
If $\ell$ contains $x$ (respectively $y$) then the $t$-space of $\Sigma^*$ corresponding to $\ell$ intersects $K_U$ in exactly one point $z$. In this case, the line $\ell$ contains exactly two points of $\mathcal H$, namely $z$ and $x$ (respectively $y$).
If $\ell$ does not contain $x$ or $y$, then the $t$-space $\bar{\ell}$ of $\Sigma^*$ corresponding to $\ell$ intersects $\Sigma$ in an element $R\in S$ which intersects $U$ in a point $u$. There are two possibilities: either $\bar{\ell}$ intersects $K_U$ only in the point $u$, or $\bar{\ell}$ intersects $K_U$ in a line $\{u,u',u''\}$. In the first case, $\ell$ is external to $\mathcal H$. In the latter case, the line $\ell$ intersects $\mathcal H$ in exactly two points $u'$ and $u''$. This shows that no three points of $\mathcal H$ are collinear in $\pi(S)$ (note that the line $\ell_\infty$ meets $\mathcal H$ in the two points $x$ and $y$). It follows that $\mathcal H$ is a set of $2^t+2$ points, no three of which are collinear, i.e. $\mathcal H$ is a hyperoval. The existence of the group $H$ of $q$ elations, with the line at infinity as the common axis, immediately follows from the construction, as $H$ corresponds to the translation group 
stabilising $K_U\setminus U$.

Conversely, suppose that $\pi(S)$ contains a translation hyperoval $\mathcal H$. Let $T$ denote the translation group of $\pi(S)$ and $H\leq T$ the translation group of $\mathcal H$. If $T_z\leq T$ denotes the group of $(z,\ell_\infty)$-elations, then the spread $S$ coincides with the set $\{T_z~:~z \in \ell_\infty\}$. All groups $T$, $H$ and $T_z \in S$ are elementary abelian 2-groups, and we will consider them as $\F_2$-vector spaces. Note that $|T|=q^{2t}$, $|H|=2^t$, and $|T_z|=2^t$. Let $x$ and $y$ be the two points of $\mathcal H$ on the line $\ell_\infty$. Then, for $z\in \ell_\infty \setminus \{x,y\}$, it follows that $|T_z\cap H|=2$, since a non-trivial $(z,\ell_\infty)$-elation fixing $\mathcal H$ is necessarily an involution. This suffices to conclude that $H$ intersects each element of $S\setminus \{T_x,T_y\}$ in a one-dimensional subspace, and therefore $H$ is a scattered space with respect to $S$ of dimension $t$ in $V(2t,2)$.
\end{proof}

\subsection{Translation caps in affine spaces}\label{subsec:caps}
The next applications is a generalisation to higher dimensional spaces of the correspondence between translation hyperovals and scattered spaces. This comes from recent work \cite{BaGiMaPoPrep}, to which we refer for the details. Here we only give a sketch of this correspondence.

As mentioned above, the ABB construction is a special case, with $r=2$, of the more general construction of the $2-(q^{rt},q^t,1)$ design $\D(S)$, and when the spread $S$ is Desarguesian, the design $\D(S)$ is an affine space ${\mathrm{AG}}(r-1,q^t)$. Generalising the previous construction, of a translation hyperoval from a scattered space in $\PG(2t-1,2)$, now starting from a scattered space $U$ w.r.t $\D_{r,t,2}$ in $\PG(rt-1,2)$ one obtains a set of points ${\mathcal{K}}=K_U\setminus U$ in the affine space ${\mathrm{AG}}(r-1,2^t)$. Again this set of points satisfies the property that no three of them are collinear. Such a set is called a {\it cap}, and this particular construction gives a {\it translation cap}. Translating this correspondence from \cite{BaGiMaPoPrep} into our terminology gives the following.
\begin{theorem}
A scattered subspace w.r.t. $\D_{r,t,2}$, $t>1$, corresponds to a translation cap in ${\mathrm{AG}}(r-1,2^t)$ and viceversa.
\end{theorem}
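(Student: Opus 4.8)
The plan is to generalise the proof of Theorem~\ref{thm:scattered-hyperoval} (the case $r=2$), replacing the affine plane and its hyperovals by the affine space $\D(S)=\mathrm{AG}(r-1,2^t)$ and its caps, and to phrase the heart of the equivalence group-theoretically so that both directions follow from one computation. Write $S=\D_{r,t,2}$ for the Desarguesian $(t-1)$-spread in $\Sigma=\PG(rt-1,2)$, embed $\Sigma$ as a hyperplane $H$ in $\Sigma^{*}=\PG(rt,2)$, and recall that, $S$ being Desarguesian, the design $\D(S)$ with point set $\mathcal P=\Sigma^{*}\setminus H$ is an affine space (by \cite[Theorem 2]{BaCo1974}). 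Its lines are the $t$-spaces $\bar\ell$ of $\Sigma^{*}$ with $\bar\ell\cap H\in S$, each carrying the $2^{t}$ affine points $\bar\ell\setminus(\bar\ell\cap H)$; the hypothesis $t>1$ ensures $2^{t}\ge 3$, so that the cap condition (no three collinear points) is non-vacuous.

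For the passage from scattered spaces to caps I would follow the $r=2$ argument almost verbatim. Given a scattered subspace $U$, choose a subspace $K_U$ of $\Sigma^{*}$ of projective dimension $\dim U+1$ with $K_U\cap H=U$, and set $\mathcal K=K_U\setminus U$. For any line $\bar\ell$ put $P=K_U\cap\bar\ell$; then $P\cap H\subseteq U\cap(\bar\ell\cap H)$, which is at most a point because $U$ is scattered and $\bar\ell\cap H$ is a spread element. Since a projective subspace of dimension $d$ meets the hyperplane $H$ in dimension at least $d-1$, this forces $\dim P\le 1$, so $P$, and hence $\mathcal K$, contains at most two points of $\bar\ell$ off $H$. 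Thus $\mathcal K$ is a cap, and the elations of $\Sigma^{*}$ with axis $H$ that stabilise $K_U$ restrict to the full elation group of $K_U$ with axis $U$, a group acting regularly on $\mathcal K$; hence $\mathcal K$ is a translation cap.

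The converse, and indeed a second proof of the forward direction, comes from reading everything inside the translation group $T$ of $\D(S)$, which I identify with $V(rt,2)$. Under this identification $\PG(T)$ is $H$, and the subgroups $T_z$ attached to the parallel classes (equivalently, to the spread elements) are the $t$-dimensional members of $S$, so that $S$ is literally a $t$-spread of $T$. Fixing a base point $v_0$ makes $\mathcal K=v_0+W$ for a subgroup $W\le T$, which is automatically an $\F_2$-subspace, and the translation-cap hypothesis says $W$ acts regularly on $\mathcal K$. Now three affine points $v_0,\ v_0+a,\ v_0+b$ lie on a common line of $\D(S)$ precisely when $a$ and $b$ lie in a common spread element $T_z$; since over $\F_2$ two distinct nonzero vectors are automatically independent, the presence of three collinear points in $\mathcal K$ is equivalent to $\dim(W\cap T_z)\ge 2$ for some $z$. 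Hence $\mathcal K$ is a cap if and only if $W$ is scattered, which establishes the correspondence in both directions.

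The step requiring the most care, and the real content of the statement, is this last bookkeeping: identifying $T$ with $V(rt,2)$, matching the directions of $\D(S)$ with the spread elements $T_z$, and verifying that $\D(S)$-collinearity of $v_0,v_0+a,v_0+b$ is captured exactly by $a,b\in T_z$. The characteristic-$2$ relation $(v_0+a)+(v_0+b)=a+b$ together with the fact that distinct nonzero $\F_2$-vectors are independent is precisely what turns the cap condition into $\dim(W\cap T_z)\le 1$; over a larger field one would instead have to produce three points with pairwise-distinct ratios, and this clean equivalence would break down, which is exactly why the statement is restricted to $q=2$.
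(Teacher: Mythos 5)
Your proposal is correct and follows essentially the same route as the paper, which only sketches the forward direction (choose $K_U$ with $K_U\cap H=U$ and take ${\mathcal{K}}=K_U\setminus U$) and defers details to \cite{BaGiMaPoPrep}; your converse is the natural transcription of the converse half of Theorem \ref{thm:scattered-hyperoval}, reading the cap inside the translation group $T\cong V(rt,2)$ and matching parallel classes with the subgroups $T_z$. The observation that over $\F_2$ two distinct nonzero vectors are automatically independent, so that the cap condition becomes exactly $\dim(W\cap T_z)\le 1$, is precisely the point the paper relies on implicitly, and you state it explicitly.
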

This correspondence leads to the existence of complete caps whose cardinality is close to the theoretical lower bound for complete caps. See \cite{BaGiMaPoPrep} for further details.

\subsection{Linear sets}\label{subsec:linear_sets}
Linear sets have many interesting aspects, and as it would take us too much time to elaborate on all of these. 
We refer to \cite{LaVa2015}  and \cite{Polverino2010} for surveys on the topic. Before we explain some of the applications of scattered spaces to the theory of linear sets, we briefly introduce the notion of a linear set using the notation and terminology of field reduction which was formalised in \cite{LaVa2015}.
The technique called "field reduction" is based on the well understood concept of subfields in a finite field, 
and, maybe surprisingly, has proved to be a very powerful tool in Galois Geometry.
Consider the {\em field reduction map} ${\mathcal{F}}_{r,t,q}$ as in \cite{LaVa2015} from $\PG(r-1,q^t)$ to $\PG(rt-1,q)$. Points of $\PG(r-1,q^t)$ are mapped onto $(t-1)$-spaces of $\PG(rt-1,q)$, and in particular the image of the set $\mathcal P$ of points of $\PG(r-1,q^t)$ forms a Desarguesian $(t-1)$-spread $\D_{r,t,q}$ of $\PG(rt-1,q)$. If $U$ is a subspace of $\PG(rt-1,q)$ then by $\B(U)$ we denote the set of points corresponding to the spread elements which have non-trivial intersection with $U$, i.e.
\begin{eqnarray}
\B(U)=\{x \in {\mathcal{P}}~:~{\mathcal{F}}_{r,t,q}(x)\cap U \neq \emptyset\}.
\end{eqnarray}
Here the set $\B(U)$ is considered as a set of points in $\PG(r-1,q^t)$, but using the one-to-one correspondence between $\mathcal P$ and $\D_{r,t,q}$ given by the field reduction map ${\mathcal{F}}_{r,t,q}$, sometimes $\B(U)$ is
also considered as a subset of $\D_{r,t,q}$, consistent with the notation we used in the previous sections. This just means that the sets $\B(U)$ and ${\mathcal{F}}_{r,t,q}(\B(U))$ are sometimes identified. The context (i.e. the ambient space) should always clarify if $\B(U)$ is considered as a subset of $\mathcal P$ or as a subset of $\D_{r,t,q}$.

A set of points $L$ in $\PG(r-1,q^t)$ is called an {\em $\F_q$-linear set} if there exists a subspace $U$ in $\PG(rt-1,q)$ such that $L=\B(U)$. An $\F_q$-linear set $\B(U)$ is said to have {\em rank $m$} if $U$ has projective dimension $m-1$. These definitions immediately lead to the following proposition.
\begin{proposition}
An $\F_q$-linear set $\B(U)$ in $\PG(r-1,q^t)$ has maximal size (w.r.t. its rank) if and only if $U$ is scattered w.r.t. ${\mathcal{D}}_{r,t,q}$.
\end{proposition}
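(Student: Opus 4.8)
The plan is to prove the statement by a direct double-counting of the points of $U$, organised according to the spread element in which each point lies. First I would fix the rank, writing $m$ for the rank of $\B(U)$, so that $U$ has projective dimension $m-1$ and hence contains exactly $(q^m-1)/(q-1)$ points. Since $\D_{r,t,q}$ is a spread, every point of $\PG(rt-1,q)$ — in particular every point of $U$ — lies in exactly one spread element. This partitions the point set of $U$ according to which element of $\D_{r,t,q}$ contains it, and by the very definition of $\B(U)$ the spread elements that actually occur in this partition are precisely those indexed by the points of $\B(U)$.

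Next I would translate this partition into the counting identity
$$\frac{q^m-1}{q-1}=\sum_{x\in \B(U)}\bigl|\,{\mathcal{F}}_{r,t,q}(x)\cap U\,\bigr|,$$
where $|\cdot|$ denotes the number of projective points of the indicated intersection. Each summand is at least $1$, because $x\in \B(U)$ means exactly that ${\mathcal{F}}_{r,t,q}(x)\cap U$ is non-empty. Comparing the number of summands with their total then immediately yields the inequality $|\B(U)|\le (q^m-1)/(q-1)$, and this upper bound is the maximal possible size of an $\F_q$-linear set of rank $m$, since it coincides with the number of points of $U$ itself and is therefore the same value for every rank-$m$ subspace.

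Finally I would read off the equality case. The bound $|\B(U)|=(q^m-1)/(q-1)$ holds if and only if every summand equals $1$, that is, if and only if $\bigl|{\mathcal{F}}_{r,t,q}(x)\cap U\bigr|=1$ for every $x\in \B(U)$; equivalently, $U$ meets each spread element in at most one point (trivially so for the elements it does not meet at all). By the definition of being scattered with respect to $\D_{r,t,q}$, this is exactly the condition that $U$ be scattered, which establishes both directions of the equivalence at once.

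I do not anticipate a serious obstacle here: the argument is a clean application of the fact that a spread partitions the ambient point set, restricted to $U$. The only points that require care are the bookkeeping between projective and vector dimension (rank $m$ corresponds to projective dimension $m-1$) and the interpretation of \emph{maximal size with respect to rank} as attaining the value $(q^m-1)/(q-1)$; once the rank is fixed this value is unambiguous, so the phrase is well defined and the biconditional reduces to the equality analysis above.
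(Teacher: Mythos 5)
Your proof is correct and is precisely the counting argument the paper has in mind when it says the proposition follows immediately from the definitions: partition the $(q^m-1)/(q-1)$ points of $U$ over the spread elements, so $|\B(U)|$ attains that bound exactly when every nonempty intersection is a single point, i.e.\ $U$ is scattered. No issues.
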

An $\F_q$-linear set $L$ of rank $m$ has at most $(q^m-1)/(q-1)$ points and if this bound is reached then $L$ is called a {\em scattered linear set}. If a $\B(U)$ is an $\F_q$-linear set in $\PG(r-1,q^t)$ and $U$ is maximum (respectively maximally) scattered w.r.t. $\D_{r,t,q}$, then $\B(U)$ is called a {\em maximum} (respectively {\em maximally}) {\em scattered $\F_q$-linear set}. 

In \cite{Polverino2010} Polverino introduced the notion of the dual linear set. If $\beta$ is a non-degenerate sesquilinear form on $\F_{q^t}^r$ and $Tr$ denotes the trace map from $\F_{q^t}$ to $\F_q$, then
$Tr\circ \beta$ defines a non-degenerate form from $\F_{q^t}^r$ to $\F_q$. If $\perp$ denotes the corresponding polarity in $\PG(rt-1,q)$, and $\B(U)$ is an $\F_q$-linear set in $\PG(r-1,q^t)$, then $\B(U^\perp)$ is called the {\em dual linear set with respect to $\beta$}. If $\B(U)$ has rank $m$ then $\B(U^\perp)$ has rank $rt-m$. For maximum
scattered linear sets we have the following theorem.

\begin{theorem}\cite[Theorem 3.5.]{Polverino2010} If $rt$ is even and $\B(U)$ is a maximum scattered $\F_q$-linear set of $\PG(r-1,q^t )$, then the dual linear set with respect to any polarity of $\PG(r-1,q^t)$ is a maximum scattered 
$\F_q$-linear set as well.
\end{theorem}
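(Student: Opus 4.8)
The plan is to transfer the scattered condition across the polarity by means of a weight--duality formula, thereby reducing the statement to a bound on how many points of the maximum scattered set $\B(U)$ can lie on a hyperplane. First I would record the compatibility of field reduction with the polarity. Write $V=\F_{q^t}^r$, let $B=Tr\circ\beta$ be the non-degenerate $\F_q$-bilinear form and $\perp$ its polarity on $\PG(rt-1,q)$. The key lemma is that for every $\F_{q^t}$-subspace $W\le V$ one has $W^{\perp}=W^{\perp_\beta}$, where $\perp_\beta$ denotes the $\F_{q^t}$-polarity of $\beta$ on $\PG(r-1,q^t)$. The inclusion $W^{\perp_\beta}\subseteq W^{\perp}$ is immediate; for the reverse one uses that $W$ is closed under multiplication by $\F_{q^t}$ together with the non-degeneracy of the trace form $(\alpha,\gamma)\mapsto Tr(\alpha\gamma)$ on $\F_{q^t}$. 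In particular $\perp$ sends each spread element $S_P=\mathcal F_{r,t,q}(P)$ to the $\F_{q^t}$-hyperplane $S_{P^{\perp}}$, so that $S_P^{\perp}=S_{P^{\perp}}$ has $\F_q$-dimension $(r-1)t$.

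Since $rt$ is even and $U$ is maximum scattered, Theorem \ref{Desarguesian upper bound} is attained, so $\dim U=rt/2$ and hence $\dim U^{\perp}=rt-rt/2=rt/2$. This already shows $\B(U^{\perp})$ has rank $rt/2$, which by Theorem \ref{Desarguesian upper bound} is the largest rank a scattered set can have; therefore it suffices to prove that $U^{\perp}$ is scattered, and maximality follows for free. For this I would establish the weight--duality formula: for each point $P$, writing $U^{\perp}\cap S_P=(U+S_P^{\perp})^{\perp}$ and invoking the lemma gives
\[
\dim(U^{\perp}\cap S_P)=t-\tfrac{rt}{2}+\dim(U\cap S_{P^{\perp}}).
\]
As $P$ runs over all points, $P^{\perp}$ runs over all hyperplanes, so $U^{\perp}$ is scattered if and only if $\dim(U\cap S_H)\le \tfrac{rt}{2}-t+1$ for every $\F_{q^t}$-hyperplane $S_H$. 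Since $U$ is scattered, $\B(U)\cap H$ is a scattered linear set of rank $\dim(U\cap S_H)$ in $H\cong\PG(r-2,q^t)$ with $|\B(U)\cap H|=(q^{\dim(U\cap S_H)}-1)/(q-1)$, and the statement reduces to the claim that every hyperplane of $\PG(r-1,q^t)$ carries at most $(q^{\,rt/2-t+1}-1)/(q-1)$ points of the maximum scattered linear set $\B(U)$.

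The hard part will be exactly this hyperplane bound, which I expect to be the crux of the whole argument. It cannot be obtained by dimension counting alone: as $S_H$ has corank $t$ one always has $\dim(U\cap S_H)\ge rt/2-t$, and the assertion is that this is within $1$ of the minimum, i.e. that $U+S_H$ has codimension at most $1$; but $\dim((U+S_H)^{\perp})=\dim(U^{\perp}\cap S_H^{\perp})$ is again a point-weight of $U^{\perp}$, so every purely dimension-theoretic reformulation is self-dual and merely restates the theorem. Moreover the generic scattered estimate applied inside $H$ gives only $\dim(U\cap S_H)\le(r-1)t/2$, which is too weak by $t/2-1$. The bound must therefore exploit genuine structure together with the fact that $U$ has the globally maximal dimension $rt/2$, so that $U$ cannot concentrate inside a corank-$t$ subspace. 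My plan to extract this is to use the dichotomy furnished by the lemma --- with respect to $B$ each spread element $S_P$ is either non-degenerate, whence $V=S_P\oplus S_P^{\perp}$ and one may project $U$ onto $S_P$ along $S_P^{\perp}$, or totally isotropic --- and to push the scattered hypothesis through the resulting projection and quotient maps; an alternative route is to invoke the known combinatorial description of the intersection of a maximum scattered linear set with a hyperplane, which for $q=2$ is tied to the ``no three collinear'' property of the associated translation caps of Section \ref{subsec:caps}. In either route the even parity of $rt$ and the maximality of $U$ enter precisely to pin $\dim(U\cap S_H)$ to the value $rt/2-t$ or $rt/2-t+1$, completing the equivalence and hence the proof.
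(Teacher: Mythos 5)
The survey itself contains no proof of this theorem (it is quoted from \cite{Polverino2010}), so your proposal has to stand on its own, and up to its last step it does: the lemma $W^{\perp}=W^{\perp_\beta}$ for $\F_{q^t}$-subspaces, the observation that $\dim U^{\perp}=rt/2$ so that only scatteredness of $U^\perp$ needs proof, and the weight--duality formula $\dim(U^{\perp}\cap S_P)=t-\frac{rt}{2}+\dim\bigl(U\cap S_{P^{\perp_\beta}}\bigr)$ are all correct and constitute exactly the reduction used in Polverino's argument. The genuine gap is that you stop at the point you yourself flag as the crux: the bound $\dim(U\cap S_H)\le \frac{rt}{2}-t+1$ for every $\F_{q^t}$-hyperplane $H$ is never established. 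Neither of the routes you sketch would deliver it: projecting $U$ onto a non-degenerate spread element $S_P$ along $S_P^{\perp}$ only re-derives the trivial lower bound $\dim(U\cap S_H)\ge \frac{rt}{2}-t$, and the translation-cap picture is confined to $q=2$ and in any case encodes the very intersection property you are trying to prove.

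What closes the argument is a result stated elsewhere in this survey (the two-intersection set theorem from \cite{BlLa2000}): if $U$ is scattered of dimension $rt/2$ w.r.t. $\D_{r,t,q}$, then $\B(U)$ meets every hyperplane in either $(q^{rt/2-t}-1)/(q-1)$ or $(q^{rt/2-t+1}-1)/(q-1)$ points. Since $\B(U)\cap H=\B(U\cap S_H)$ and $U\cap S_H$ is scattered, this says precisely that $\dim(U\cap S_H)\in\{\frac{rt}{2}-t,\ \frac{rt}{2}-t+1\}$, which through your weight formula gives $\dim(U^{\perp}\cap S_P)\in\{0,1\}$ and finishes the proof. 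That theorem is obtained not by structural projections but by a standard double count of incidences between points of $\B(U)$ (there are $(q^{rt/2}-1)/(q-1)$ of them) and hyperplanes, and between pairs of such points and hyperplanes; the resulting identities force every hyperplane weight to take one of the two values above. A secondary caveat: your opening claim that maximality forces $\dim U=rt/2$ presupposes that a scattered subspace of dimension $rt/2$ exists for the given $(r,t,q)$, which is guaranteed for $r$ even but still open for some $r$ odd; if $\dim U<rt/2$ the statement actually fails, since then $\dim U^{\perp}>rt/2$ violates Theorem \ref{Desarguesian upper bound}, so the hypothesis must be read as rank exactly $rt/2$.
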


One of the important questions regarding linear sets is the {\em equivalence problem}. Contrary to linear subspaces (which are  equivalent if and only if they have the same dimension), two linear sets of the same rank are not necessarily equivalent under the action of the projective group or the collineation group of the ambient projective space. This is of course not surprising since two linear sets of the same rank might even have different cardinalities. In few cases the equivalence problem has been solved. 
\begin{theorem}\cite{LaVa2010}{ }\footnote{It was pointed out in \cite{CsZa2015} that one of the conditions of Theorem 3 in \cite{LaVa2010} is not necessary for the equivalence of two linear sets. The condition is however sufficient, and hence does not affect the equivalences stated here.}
All scattered $\F_q$-linear sets of rank $3$ in $\PG(1,q^3)$ and $\PG(1,q^4)$ are equivalent under ${\mathrm{PGL}}(2,q^4)$.
All scattered $\F_q$-linear sets of rank $3$ in $\PG(1,2^5)$ are equivalent under ${\mathrm{P\Gamma L}}(2,2^5)$.\end{theorem}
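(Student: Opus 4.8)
The plan is to pass from the linear set to its underlying $\F_q$-subspace, put that subspace in a normal form using the projective group, extract a clean algebraic criterion for being scattered, and then reduce the whole statement to a transitivity question for a small linear group acting on the normal-form parameters. For a rank-$3$ linear set in $\PG(1,q^t)$ (so $r=2$) the underlying object is a plane $\PG(U)$ with $\dim_{\F_q}U=3$ inside $\PG(2t-1,q)$, and by the Proposition characterising maximal size and the sufficiency noted in the footnote it is enough, in order to prove ${\mathrm{PGL}}(2,q^t)$-equivalence of two such linear sets, to produce an element $g\in{\mathrm{GL}}(2,q^t)$ (or ${\mathrm{\Gamma L}}(2,q^t)$ in the semilinear case) with $g\,U_1=U_2$. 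First I would choose a point of $L$ and map it to $\langle(1,0)\rangle$. The scattered hypothesis forces $U\cap(\F_{q^t}\times\{0\})$ to be $1$-dimensional, so the projection of $U$ onto the second coordinate is $2$-dimensional; a diagonal scaling and a unipotent transformation then bring $U$ to the normal form $U_{\omega,m}=\langle(1,0),(0,1),(m,\omega)\rangle_{\F_q}$ with $\omega\in\F_{q^t}\setminus\F_q$. Since $(1,0),(0,1)\in U_{\omega,m}$, this subspace depends only on the classes $\bar\omega,\bar m\in\F_{q^t}/\F_q$.

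Next I would record the key criterion. Writing a general vector of $U_{\omega,m}$ as $(s+pm,\,r+p\omega)$ with $s,r,p\in\F_q$ and rescaling the slice $p\neq 0$ to $p=1$, one sees that $L_{\omega,m}$ is the sub-line $\PG(1,q)$ together with the $q^2$ points $\langle(s+m,\,r+\omega)\rangle$. A short computation with the condition $(s_1+m)(r_2+\omega)=(s_2+m)(r_1+\omega)$ shows that these $q^2+q+1$ points are pairwise distinct, and disjoint from the affine $\F_q$-points, exactly when $m\notin\langle1,\omega\rangle_{\F_q}$. Hence $U_{\omega,m}$ is scattered if and only if $\{1,\omega,m\}$ is $\F_q$-linearly independent. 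Thus a scattered rank-$3$ linear set determines, after normalisation, an ordered pair of independent vectors $(\bar\omega,\bar m)$ in the $(t-1)$-dimensional space $\F_{q^t}/\F_q$.

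The equivalence problem now becomes a transitivity problem. The subgroup ${\mathrm{GL}}(2,q)\le{\mathrm{GL}}(2,q^t)$ stabilises the sub-line $\PG(1,q)\subset L$ and acts on $(\bar\omega,\bar m)$ by the natural linear action on $(\F_{q^t}/\F_q)^2$; since ${\mathrm{GL}}(2,q)$ is transitive on ordered bases of a fixed $2$-dimensional subspace, its orbits on scattered normal forms correspond bijectively to the $2$-dimensional subspaces $\langle\bar\omega,\bar m\rangle$, equivalently to the $3$-dimensional $\F_q$-subspaces $S=\langle1,\omega,m\rangle$ of $\F_{q^t}$ containing $1$. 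For $t=3$ the space $\F_{q^3}/\F_q$ has dimension $2$, so there is only one such $S$ (the whole field $\F_{q^3}$) and a single orbit; this settles $\PG(1,q^3)$ at once.

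For $t=4$ (and for $q^t=2^5$) there are several admissible subspaces $S$ — for $t=4$ precisely $q^2+q+1$ of them — so ${\mathrm{GL}}(2,q)$ alone does not suffice, and one must exploit that the stabiliser of $U_{\omega,m}$ is strictly larger than the stabiliser of the sub-line. Concretely, any ordered pair $v_1,v_2\in U$ that is independent over $\F_{q^t}$ yields $g=[\,v_1\mid v_2\,]^{-1}\in{\mathrm{GL}}(2,q^t)$ with $g\,U$ again in normal form (here scatteredness guarantees that each spread element meets $U$ in only $q$ vectors, so there is a rich supply of such frames), and these re-normalisations generally carry $S$ to a different subspace. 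The main obstacle is exactly to show that this enlarged action is transitive on the $q^2+q+1$ subspaces $S$, thereby collapsing everything to one ${\mathrm{PGL}}(2,q^4)$-orbit; this is the genuinely case-specific part of the argument and is where the smallness of $t$ is used. For $q^t=2^5$ the analogous merging is done by a finite check over the $3$-dimensional subspaces of $\F_{32}$ containing $1$, and since it requires composing with the Frobenius automorphism it yields equivalence only under ${\mathrm{P\Gamma L}}(2,2^5)$, which accounts both for the appearance of the semilinear group and for the restriction $q=2$.
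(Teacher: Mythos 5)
Your reduction is set up correctly and one of the three cases is actually complete. The normal form $U_{\omega,m}=\langle(1,0),(0,1),(m,\omega)\rangle_{\F_q}$ is attainable by an element of $\mathrm{GL}(2,q^t)$; the criterion that $U_{\omega,m}$ is scattered w.r.t.\ $\D_{2,t,q}$ if and only if $1,\omega,m$ are $\F_q$-linearly independent is right (the cleanest way to see it: the spread element $\{(x,ax)\}$ meets $U_{\omega,m}$ in dimension $\geq 2$ exactly when the $\F_q$-span of $1$, $a$, $\omega-am$ is $1$-dimensional, i.e.\ $a\in\F_q$ and $\omega-am\in\F_q$, plus the analogous check for the two degenerate spread elements); and the identification of $\mathrm{GL}(2,q)$-orbits of normal forms with $2$-dimensional subspaces of $\F_{q^t}/\F_q$ is correct. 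Since $\F_{q^3}/\F_q$ has a unique such subspace, the $\PG(1,q^3)$ statement follows (with the group $\mathrm{PGL}(2,q^3)$ --- the survey's ``$\mathrm{PGL}(2,q^4)$'' for that case is evidently a slip). Note the survey itself gives no proof, quoting the result from \cite{LaVa2010}.

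For $\PG(1,q^4)$ and $\PG(1,2^5)$, however, your proposal stops exactly where the theorem acquires content. You correctly observe that $\mathrm{GL}(2,q)$ only identifies normal forms sharing the same intermediate space $S=\langle 1,\omega,m\rangle_{\F_q}$, leaving $q^2+q+1$ candidate orbits when $t=4$, and you then assert that re-normalising with respect to other $\F_{q^t}$-independent frames inside $U$ ``generally'' moves $S$ and that ``the main obstacle is exactly to show that this enlarged action is transitive.'' That obstacle \emph{is} the theorem: nothing in the proposal shows that the $\mathrm{GL}(2,q^4)$-orbit of $U_{\omega,m}$ contains normal forms realising more than one $S$, let alone all $q^2+q+1$ of them --- a priori that orbit could meet the set of normal forms in a proper $\mathrm{GL}(2,q)$-stable subset. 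To close the gap you would need to compute, for a general frame $v_1,v_2\in U_{\omega,m}$ independent over $\F_{q^4}$, the subspace $S'$ attached to $[\,v_1\mid v_2\,]^{-1}U_{\omega,m}$ and verify that every $2$-dimensional subspace of $\F_{q^4}/\F_q$ occurs as the frame varies; similarly for $\F_{2^5}$ you would have to show both that the $\F_2$-linear re-normalisations do \emph{not} suffice and that adjoining the Frobenius does. These explicit computations are the substance of the proof in \cite{LaVa2010}, and without them the $q^4$ and $2^5$ cases remain unproved.
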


The equivalence of maximum scattered $\F_q$-linear sets in $\PG(1,q^3)$ can be generalised to all projective spaces  of odd dimension over $\F_{q^3}$. The following was shown for $n=2$ in \cite[Proposition 2.7]{MaPoTr2007} and for general $n$ in \cite[Theorem 4]{LaVa2013}.
\begin{theorem}
All maximum scattered $\F_q$-linear sets in $\PG(2n-1,q^3)$ are ${\mathrm{P\Gamma L}}$-equivalent.
\end{theorem}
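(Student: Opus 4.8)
The plan is to pass from linear sets to subspaces, reduce the whole statement to the base case $n=1$ (already available from the cited rank-$3$ classification on $\PG(1,q^3)$), and carry out the reduction through the \emph{transversals} of the pseudoregulus determined by a maximum scattered subspace. Concretely, since $r=2n$ is even and $t=3$, Theorem \ref{Desarguesian upper bound} together with its tightness show that \emph{maximum scattered} here means rank $rt/2=3n$; so a maximum scattered $\F_q$-linear set of $\PG(2n-1,q^3)$ is $\B(U)$ for an $\F_q$-subspace $U$ of $V=\F_{q^3}^{2n}$ with $\dim_{\F_q}U=3n$ scattered w.r.t. $\D_{2n,3,q}$. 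It suffices to prove that any two such $U$ lie in one orbit of $\mathrm{\Gamma L}(2n,q^3)$, since such a map induces a collineation of $\PG(2n-1,q^3)$ carrying one linear set onto the other. A first observation is that $U$ spans $V$ over $\F_{q^3}$: if $\langle U\rangle_{\F_{q^3}}$ had $\F_{q^3}$-dimension $m<2n$, then $U$ would be scattered w.r.t. $\D_{m,3,q}$ and Theorem \ref{Desarguesian upper bound} would force $3n=\dim_{\F_q}U\le 3m/2<3n$.

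Next I would record the graph/linearized description that drives the computations. Writing $V=A\oplus B$ with $A=B=\F_{q^3}^n$ and using Frobenius $\sigma:v\mapsto v^q$, every $\F_q$-endomorphism of $\F_{q^3}^n$ is uniquely $A_0+A_1\sigma+A_2\sigma^2$ with $A_i\in M_n(\F_{q^3})$. After choosing an $(n-1)$-space skew to $\B(U)$ (a short counting argument) one may take $U=\{(v,g(v)):v\in\F_{q^3}^n\}$, and an $\F_{q^3}$-linear shear removes $A_0$. A direct computation then shows that $U$ is scattered exactly when, for every $\lambda\in\F_{q^3}\setminus\F_q$, the $\F_q$-linear map
\begin{displaymath}
v\longmapsto (\lambda^q-\lambda)\,A_1 v^q+(\lambda^{q^2}-\lambda)\,A_2 v^{q^2}
\end{displaymath}
is injective. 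The block group $\mathrm{GL}(n,q^3)\times\mathrm{GL}(n,q^3)$, together with $\sigma$ and the swap of the two factors, acts on the pair $(A_1,A_2)$, and the canonical representative $U_0=\{(x_1,x_1^q,\dots,x_n,x_n^q)\}$ corresponds to $(I,0)$. The target becomes: every scattered pair is equivalent to $(I,0)$.

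The engine of the proof is the reduction to $n=1$. To $U$ I would attach its \emph{transversal lines}: the $\F_{q^3}$-lines $\ell$ (i.e.\ $2$-dimensional $\F_{q^3}$-subspaces) for which $U\cap\ell$ has $\F_q$-dimension $3$; these are exactly the lines on which $U$ induces a maximum scattered rank-$3$ linear set of $\PG(1,q^3)$, and together they form the pseudoregulus associated with $U$. The key step is to produce $n$ pairwise skew transversals $\ell_1,\dots,\ell_n$ spanning $V$ and compatible with $U$, in the sense that $U=\bigoplus_{i=1}^n (U\cap\ell_i)$; the dimensions match since each summand has $\F_q$-dimension $3$. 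Granting this, on each $\ell_i\cong\PG(1,q^3)$ the intersection $U\cap\ell_i$ is a maximum scattered $\F_q$-linear set of rank $3$, hence $\mathrm{\Gamma L}(2,q^3)$-equivalent to $\{(x,x^q)\}$ by the already established result for $\PG(1,q^3)$; assembling these block-by-block maps (with a common Frobenius twist) yields $\phi\in\mathrm{\Gamma L}(2n,q^3)$ sending $U$ to $U_0$, which is the desired equivalence.

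The main obstacle is precisely the transversal decomposition $U=\bigoplus_i(U\cap\ell_i)$: one must show that enough transversals exist, that they can be chosen pairwise skew and spanning, and that they are compatible with $U$ so that the sum is direct and recaptures all of $U$. In the matrix picture this amounts to simultaneously block-diagonalizing into $n$ rank-one blocks the $\F_{q^3}$-semilinear operator built from the pair $(A_1,A_2)$, with the scattered hypothesis guaranteeing that its invariant $\F_{q^3}$-lines exist and lie in general position. I expect this to be the delicate part, the base case $n=1$ being supplied by the cited $\PG(1,q^3)$ classification and the gluing into a global semilinear map being comparatively routine.
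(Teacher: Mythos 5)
Your reduction to subspaces is sound: for $r=2n$ even the maximum rank is indeed $rt/2=3n$, the spanning argument via Theorem \ref{Desarguesian upper bound} is correct, the graph form $U=\{(v,A_1v^q+A_2v^{q^2})\}$ with its injectivity criterion for scatteredness is right, and the base case on $\PG(1,q^3)$ is available from the cited rank-$3$ classification. But the step you yourself flag as delicate is not a technical loose end to be tidied up later --- it is essentially the entire content of the theorem. You need, for an \emph{arbitrary} maximum scattered $U$, the existence of $n$ independent $\F_{q^3}$-lines $\ell_i$ with $\dim_{\F_q}(U\cap\ell_i)=3$ and $U=\bigoplus_i(U\cap\ell_i)$. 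Already the existence of a single such line for $n\ge 2$ does not follow from anything you have set up: each $2$-dimensional $\F_q$-subspace of $U$ spans a line of $\PG(2n-1,q^3)$ (scatteredness forbids it lying in a spread element), and a $3$-secant line exists precisely when two distinct such subspaces span the same line; but there are about $q^{6n-4}$ of them against about $q^{12n-12}$ lines, so no pigeonhole forces a coincidence for $n\ge 2$. Worse, the facts you are implicitly importing --- every point of $\B(U)$ on a unique $(q^2+q+1)$-secant, the secants pairwise disjoint, the two transversal $(n-1)$-spaces --- are obtained in \cite{LaVa2013} \emph{as consequences} of the canonical form $\{(x_0,x_0^q,\dots,x_{n-1},x_{n-1}^q)\}$, i.e.\ after the equivalence theorem is proved, so invoking them here is circular. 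And even granted one secant through each point, pairwise disjointness does not yield $n$ \emph{independent} secants by a greedy argument, since a new secant may meet the span of the previously chosen ones in a point off $\B(U)$, and the quotient of $U$ by that span need not stay scattered.

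The proofs in \cite{MaPoTr2007} (for $n=2$) and \cite{LaVa2013} (general $n$) take a different route, already hinted at in the pseudoregulus subsection of this survey: a linear set of rank $3n$ in $\PG(2n-1,q^3)$ is the projection of a $q$-subgeometry $\Sigma\cong\PG(3n-1,q)$ of $\PG(3n-1,q^3)$ from an $(n-1)$-dimensional center $\Gamma$; maximum scatteredness is shown to force $\Gamma$, $\Gamma^\sigma$, $\Gamma^{\sigma^2}$ (with $\sigma$ the Galois collineation fixing $\Sigma$) to be mutually complementary, i.e.\ to form the transversal triple of a Desarguesian $(n-1)$-spread of $\Sigma$; and the equivalence then follows from the transitivity of the stabilizer of $\Sigma$ on such centers. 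If you want to salvage your block-diagonalization plan, you must prove, from the injectivity criterion on $(A_1,A_2)$ alone, that the associated semilinear operator admits $n$ independent invariant $\F_{q^3}$-lines meeting $U$ in dimension $3$ --- which is the same work in different clothing. (Your worry about a common Frobenius twist, by contrast, is harmless: all maximum scattered $3$-dimensional subspaces of $V(2,q^3)$ lie in a single $\mathrm{GL}(2,q^3)$-orbit, e.g.\ $\{(x,\delta x^{q^2})\}$ is carried to $\{(y,y^q)\}$ by swapping the coordinates and rescaling, so every block can be normalized linearly.)
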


This equivalence does however not generalise to maximum scattered $\F_q$-linear sets of odd dimensional projective spaces over extension fields of degree $>3$.
For instance, in \cite{LuMaPoTr2014} it was shown that in $\PG(2n-1,q^t)$, $q>3$, $t\geq4$, there exist inequivalent maximum scattered linear sets.

Another interesting question concerning linear sets is the {\em intersection problem}. Given two linear sets $L_1$ and $L_2$ of given rank in a given projective space, what are the possibilities for the intersection $L_1\cap L_2$? Again the answer is trivial for subspaces, and the question has also been answered for subgeometries (see \cite[Theorem 1.3]{DoDu2008}), but for linear sets the problem is much more complicated and only few results are known. 
The following theorem gives an answer to the intersection problem for an $\F_q$-linear set of rank $k$ and a
scattered $\F_q$-linear set of rank two (i.e. an $\F_q$-subline).
\begin{theorem}\cite[Theorem 8 and 9]{LaVa2010} An $\mathbb{F}_q$-subline intersects an $\F_q$-linear set of rank $k$ of $\PG(1,q^h)$ in $0,1,\ldots,\min\{q+1,k\}$ or $q+1$ points and for every subline $L\cong\PG(1,q)$ of $\PG(1,q^h)$, there is a linear set $S$ of rank $k$, $k\leq h$ and $k\leq q+1$, intersecting $L$ in exactly $j$ points, for all $0\leq j\leq k$.
\end{theorem}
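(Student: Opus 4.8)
The plan is to work in the field-reduction model and turn the statement into a counting problem about which Desarguesian spread elements meet a fixed subspace. Using the correspondence between points of $\PG(1,q^h)$ and elements of $\D_{2,h,q}$, write the rank-$k$ linear set as $S=\B(U)$ with $\dim_{\F_q}U=k$ inside $\F_{q^h}^2$. Since $\mathrm{P\Gamma L}(2,q^h)$ sends sublines to sublines and linear sets to linear sets of the same rank, and $\mathrm{PGL}(2,q)$ is sharply $3$-transitive on an $\F_q$-subline, I first normalise the subline to the canonical one $L=\B(W)$ with $W=\F_q^2$. Its $q+1$ points correspond to the spread elements $\sigma_a=\F_{q^h}(1,a)$, $a\in\F_q$, together with $\sigma_\infty=\F_{q^h}(0,1)$, and a point of $L$ lies in $S$ exactly when the corresponding $\sigma_a$ meets $U$ nontrivially. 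Thus $|L\cap S|=|A|$ with $A=\{a\in\F_q\cup\{\infty\}:U\cap\sigma_a\neq 0\}$, and the theorem becomes a statement about the set $A$ of $\F_q$-rational slopes realised by $U$.

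For the intersection part I would prove the dichotomy that either $A=\F_q\cup\{\infty\}$, giving $q+1$ points, or $|A|\le k$. If $A$ omits a point $P_0$ of $L$, I apply an element of $\mathrm{PGL}(2,q)$ stabilising $L$ and sending $P_0$ to $\infty$; this changes neither $\dim U$ nor $|A|$, so I may assume $\infty\notin A$, i.e. $U\cap\sigma_\infty=0$. Then the first-coordinate projection of $U$ is injective, so $U$ is the graph $\{(\mu,g(\mu)):\mu\in U_1\}$ of an $\F_q$-linear map $g\colon U_1\to\F_{q^h}$ with $\dim_{\F_q}U_1=k$. Now $a\in A$ iff $E_a=\{\mu\in U_1:g(\mu)=a\mu\}$ is nonzero, i.e. $a$ is an \emph{eigenvalue} of $g$ with eigenvector in $U_1$. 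The key observation is that $U_1$ is an $\F_q$-space and $a\in\F_q$, so $g(\mu)=a\mu$ forces $a\mu\in U_1$ and hence $g^{n}(\mu)=a^{n}\mu$ for all $n$; a Vandermonde argument over the distinct eigenvalues then shows that eigenvectors for distinct eigenvalues are $\F_q$-independent. Consequently the $E_a$ form a direct sum inside $U_1$, whence $|A|=\#\{a:E_a\neq0\}\le\dim_{\F_q}U_1=k$. Combined with the trivial bound $|A|\le q+1$, this yields exactly the list $0,1,\dots,\min\{q+1,k\}$ or $q+1$.

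For the realisability part, given $j$ with $0\le j\le k\le\min\{h,q+1\}$ and a prescribed subline $L$, I would build $U$ explicitly. Choose $j$ points of $L$, say those of $\sigma_{a_1},\dots,\sigma_{a_j}$ (possible as $j\le q+1$), absorb the extra $k-j$ dimensions into a single spread element by taking a $(k-j+1)$-dimensional $\F_q$-subspace $M\subseteq\sigma_{a_1}$, and pick $r_i\in\sigma_{a_i}$ for $2\le i\le j$. Setting $U=M\oplus\langle r_2,\dots,r_j\rangle$, the point count is governed by the independence analysis above: choosing the underlying $\F_{q^h}$-data to be $\F_q$-independent, which is possible precisely because $k\le h$ leaves enough room in $\F_{q^h}$, makes the sum direct, forces $\dim_{\F_q}U=k$, and guarantees $U\cap\sigma_a=0$ for every $a\notin\{a_1,\dots,a_j\}$. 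Since a subspace of a single spread element meets no other spread element, the padding inside $\sigma_{a_1}$ creates no spurious intersections, so $S=\B(U)$ has rank $k$ and meets $L$ in exactly $j$ points; the case $j=0$ is handled by taking $U\subseteq\sigma_b$ for some $b\notin L$.

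The hard part is the dichotomy in the first statement: a priori the subspaces $U\cap\sigma_a$ only meet pairwise trivially, which does not force their sum to be direct, and the exceptional value $q+1$ arises exactly when directness fails. The step that resolves this is the reduction to the graph form combined with the eigenvector-independence argument, which upgrades ``pairwise trivial intersection'' to genuine $\F_q$-independence as soon as a single point of the subline is missing from $S$; this is precisely what rules out any intersection size strictly between $k$ and $q+1$.
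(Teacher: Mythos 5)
The survey does not reproduce a proof of this theorem --- it is quoted from \cite{LaVa2010} --- so there is no in-paper argument to compare against; judged on its own, your proof is correct and is essentially the standard field-reduction argument behind Theorems 8 and 9 of that source. The two key steps both hold up: after using the stabiliser of the subline to move an omitted point to $\sigma_\infty$, the subspace $U$ really is a graph over a $k$-dimensional $U_1\subseteq\F_{q^h}$, and the eigenspaces $E_a$ (for $a\in\F_q$) are in direct sum because $g$ may legitimately be iterated on eigenvectors (as $a\mu\in U_1$ for $a\in\F_q$), which gives the dichotomy $|L\cap S|\le k$ or $|L\cap S|=q+1$; and in the constructive part the single $\F_q$-independence condition on $M_1,\rho_2,\dots,\rho_j$ (possible since $k\le h$) simultaneously forces $\dim U=k$ and kills the relation $(a_1-a)\mu+\sum_i c_i(a_i-a)\rho_i=0$ for every $a\notin\{a_1,\dots,a_j\}$, so no spurious points of $L$ appear.
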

In \cite[Proposition 5.2]{DoDu2014} the authors determined the intersection of two scattered $\F_q$-linear sets of rank $t+1$ in $\PG(2,q^t)$.
Further results on the intersection of (not necessarily scattered) linear sets can be found in \cite{LaVa2013} and \cite{Pepe2011}.

\subsection{Two-intersection sets}
A two-intersection set w.r.t. $k$-dimensional spaces in $\PG(V)$ is a set $\Omega$ of points such that the size of the intersection of the set $\Omega$ with a $k$-space only takes two different values, say $m_1$ and $m_2$. The numbers $m_1$ and $m_2$ are called the {\em intersection numbers} of the set $\Omega$. A fundamental result which makes scattered spaces particularly interesting is the following.

\begin{theorem}{\rm\cite{BlLa2000}}
If $U$ is a scattered space w.r.t. ${\mathcal{D}}_{r,t,q}$ with $\dim U=rt/2$, then $B(U)$ is a two-intersection set w.r.t. hyperplanes in $\PG(r-1,q^t)$, with intersection numbers
$$
m_1=\frac{q^{\frac{rt}{2}-t} -1}{q -1} ~\mbox{and}~
m_2=\frac{q^{\frac{rt}{2}-t+1} -1}{q -1}.
$$
\end{theorem}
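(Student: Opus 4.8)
The plan is to convert the combinatorial intersection $\B(U)\cap H$ into the dimension of an $\F_q$-intersection inside $V(rt,q)$, and then to transport that dimension to the dual subspace $U^\perp$ by means of a polarity compatible with field reduction; the whole statement will then collapse to the fact that $U^\perp$ is itself scattered.

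First I would use field reduction. A hyperplane $H$ of $\PG(r-1,q^t)$ pulls back under $\mathcal{F}_{r,t,q}$ to an $\F_{q^t}$-subspace $\hat H$ of $V(rt,q)$ of $\F_q$-codimension $t$; being $\F_{q^t}$-invariant, $\hat H$ is a union of elements of the spread $\D_{r,t,q}$. A point of $\PG(r-1,q^t)$ belongs to $\B(U)\cap H$ precisely when its spread element $R$ satisfies $R\subseteq\hat H$ and $R\cap U\neq 0$. Since $U$ is scattered, each such $R$ meets $U$ in a single point, and no two distinct points of $U\cap\hat H$ can share a spread element (else that element would meet $U$ in dimension at least $2$); hence these elements are in bijection with the points of $\PG(U\cap\hat H)$, giving
\[
|\B(U)\cap H|=\frac{q^{\dim(U\cap\hat H)}-1}{q-1}.
\]
Everything now rests on computing $\dim(U\cap\hat H)$.

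Next I would bring in the polarity $\perp$ of $\PG(rt-1,q)$ attached to $\mathrm{Tr}\circ\beta$ for a nondegenerate $\F_{q^t}$-sesquilinear form $\beta$, as in Section~\ref{subsec:linear_sets}. The reason for this particular choice is that $\perp$ sends $\F_{q^t}$-subspaces to $\F_{q^t}$-subspaces, so $\hat H^\perp$ is a single spread element $R$. Using $(U\cap\hat H)^\perp=U^\perp+\hat H^\perp$, the dimension formula, and $\dim U=\dim U^\perp=rt/2$, a one-line computation yields
\[
\dim(U\cap\hat H)=\tfrac{rt}{2}-t+\dim(U^\perp\cap R).
\]
Thus the candidate intersection numbers $m_1$ and $m_2$ are exactly the images of the two values $\dim(U^\perp\cap R)\in\{0,1\}$, and the theorem is equivalent to the claim that $U^\perp$ meets every spread element in at most a point, i.e. that $U^\perp$ is scattered.

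The crux is therefore precisely this scatteredness of the dual. Because $\dim U=rt/2$ forces $rt$ to be even and makes $U$ maximum scattered (by Theorem~\ref{Desarguesian upper bound}), I would invoke Polverino's duality theorem from Section~\ref{subsec:linear_sets}, which guarantees that $\B(U^\perp)$ is again maximum scattered, so that $\dim(U^\perp\cap R)\le 1$ for every spread element $R$. Substituting the two admissible values back into the displayed formula produces $m_1=(q^{rt/2-t}-1)/(q-1)$ and $m_2=(q^{rt/2-t+1}-1)/(q-1)$; both are attained since $U^\perp$ is a nonzero proper scattered subspace, so some spread elements avoid it while others meet it in a point. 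I expect the main obstacle to be exactly this dual-scatteredness step: the identity above shows that it is logically equivalent to the upper bound $\dim(U\cap\hat H)\le rt/2-t+1$, so it genuinely carries the content of the theorem and cannot be extracted from the dimension formula alone. Without the duality theorem one would have to establish it directly, for instance through the correspondence with two-weight codes.
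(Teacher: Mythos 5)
The survey states this theorem with only a citation to \cite{BlLa2000} and contains no proof of it, so I am judging your argument on its own merits. Your reductions are correct and cleanly carried out: since $\hat H$ is a union of spread elements and $U$ is scattered, the identity $|\B(U)\cap H|=(q^{\dim(U\cap \hat H)}-1)/(q-1)$ holds; the trace form does carry $\F_{q^t}$-subspaces to their $\F_{q^t}$-orthogonal complements, so $\hat H^\perp$ is a single spread element $R$; and $(U\cap\hat H)^\perp=U^\perp+\hat H^\perp$ together with $\dim U^\perp=rt/2$ gives $\dim(U\cap\hat H)=rt/2-t+\dim(U^\perp\cap R)$. Combined with the Grassmann lower bound $\dim(U\cap\hat H)\geq rt/2-t$, this correctly reduces the theorem to the single claim that $U^\perp$ is scattered w.r.t. $\D_{r,t,q}$.

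That claim, however, is never actually established, and the way you discharge it is circular. As you yourself point out, your dimension identity shows that ``$U^\perp$ is scattered'' is \emph{equivalent} to the bound $\dim(U\cap\hat H)\leq rt/2-t+1$ for all hyperplanes $H$, which is precisely the content of the theorem. Polverino's duality theorem (\cite[Theorem 3.5]{Polverino2010}, quoted in Section \ref{subsec:linear_sets}) is not an independent input here: in \cite{Polverino2010} it is \emph{deduced from} the two-intersection property of maximum scattered linear sets --- i.e.\ from the statement you are proving --- via exactly the weight computation you wrote down. So the argument reduces the theorem to an equivalent reformulation and then cites a result whose published proof presupposes the theorem. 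To close the gap one must prove the upper bound on $\dim(U\cap\hat H)$ directly; the classical route (and the spirit of \cite{BlLa2000}) is combinatorial: writing $k_H=|\B(U)\cap H|=(q^{d_H}-1)/(q-1)$ with $d_H\geq rt/2-t$ an integer, one computes $\sum_H 1$, $\sum_H k_H$ and $\sum_H k_H(k_H-1)$ by counting hyperplanes through points and through pairs of points of $\B(U)$, and shows that $\sum_H (k_H-m_1)(k_H-m_2)=0$; since no integer $d_H$ yields a value strictly between $m_1$ and $m_2$, every summand is nonnegative and hence zero, forcing $d_H\in\{rt/2-t,\,rt/2-t+1\}$. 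Some such direct argument is the missing core of your proposal; once it is supplied, your duality computation becomes a clean \emph{proof} of Polverino's theorem rather than an appeal to it.
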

If $t$ is even, then this set has the same parameters as the union of 
$(q^{t/2}-1)/(q-1)$ pairwise disjoint Baer subgeometries isomorphic to $\PG(r-1,q^{t/2})$ (call such a set of {\it type I}).
If $t$ is odd, then this set has the same parameters as the union of 
$(q^t-1)/(q-1)$ elements of an $(r/2-1)$-spread in $\PG(r-1,q^t)$. 
We call these two-intersection sets of {\it type II}.
It was proved for $r=3$ and $t=4$ in \cite{BaBlLa2000} and for general $rt$ even in \cite{BlLa2002} that the two-intersection sets obtained from scattered spaces are not of these types.
\begin{theorem}{\rm\cite{BlLa2002}}\label{thm:nonequiv}
A scattered $\F_q$-linear set of rank $rt/2$ in $\PG(r-1,q^t)$ is 
inequivalent to the two-intersection sets of type I or type II.
\end{theorem}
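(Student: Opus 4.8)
The plan is to distinguish the scattered linear set $\B(U)$ from the structured sets by means of a $\mathrm{P\Gamma L}(r,q^t)$-invariant. Since all three sets share the same cardinality and the same hyperplane intersection numbers $m_1,m_2$ (this is exactly what makes the comparison delicate), no invariant read off from hyperplanes can help; instead I would examine the intersection pattern with \emph{low-dimensional} subspaces, beginning with lines. The governing invariant is the multiset of line-intersection sizes $\{\,|\B(U)\cap \ell|:\ell\text{ a line of }\PG(r-1,q^t)\,\}$, which is preserved by every collineation.

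The key lemma I would establish first is: \emph{a scattered linear set of rank $rt/2$ meets every line of $\PG(r-1,q^t)$ in at most $(q^t-1)/(q-1)$ points, and in particular contains no full line}. The proof is by field reduction. A line $\ell\cong\PG(1,q^t)$ reduces to a subspace $\tilde\ell\cong\PG(2t-1,q)$ of $\PG(rt-1,q)$, and the spread elements meeting $\ell$ induce a Desarguesian $(t-1)$-spread (a copy of $\D_{2,t,q}$) on $\tilde\ell$. Since $\mathcal F(P)\subseteq\tilde\ell$ for every $P\in\ell$, one has $\B(U)\cap\ell=\B(U\cap\tilde\ell)$, and $U\cap\tilde\ell$ is scattered with respect to this induced Desarguesian spread. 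Applying Theorem~\ref{Desarguesian upper bound} inside $\tilde\ell=V(2t,q)$ (the case $r=2$) forces $\dim(U\cap\tilde\ell)\le t$, whence $|\B(U)\cap\ell|\le(q^t-1)/(q-1)<q^t+1$, giving the claim.

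This immediately settles the comparison with type~II (the case $t$ odd, so $r$ is even and, in the genuine range, $r\ge 4$): each spread element is a subspace $\PG(r/2-1,q^t)$ of projective dimension $\ge 1$ contained in the set, so a type~II set contains full lines of $\PG(r-1,q^t)$. As $\B(U)$ contains none, and ``containing a full line'' is a collineation invariant, $\B(U)$ cannot be $\mathrm{P\Gamma L}$-equivalent to a type~II set.

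The remaining, and genuinely harder, case is type~I (when $t$ is even), because a union of pairwise disjoint Baer subgeometries $\PG(r-1,q^{t/2})$ also contains no full line of $\PG(r-1,q^t)$: a line meets each subgeometry in at most a Baer subline of $q^{t/2}+1$ points. Here I would pass to the full line-intersection \emph{distribution} (and, if that proves insufficient, to intersections with solids or with codimension-two subspaces). The idea is that a type~I set carries the rigid internal geometry of its $(q^{t/2}-1)/(q-1)$ subgeometries: it contains complete $\F_{q^{t/2}}$-subgeometries $\PG(r-1,q^{t/2})$, and with them a highly structured family of $(q^{t/2}+1)$-secant Baer sublines closed under the subgeometry incidence. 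By contrast, the line-sizes occurring for $\B(U)$ are exactly the cardinalities of scattered $\F_q$-linear sets of rank $\le t$ on a line, and I would show that no arrangement of the disjoint subgeometries can reproduce this spectrum. The main obstacle is precisely this last step: computing the line-type distributions of an arbitrary type~I set and of $\B(U)$ and proving that they differ, since one must exclude \emph{every} configuration of the subgeometries rather than a single canonical model.
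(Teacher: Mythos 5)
The survey states this theorem with a citation to \cite{BlLa2002} and reproduces no proof, so your proposal has to be judged against what a complete argument requires rather than against a proof in the text. The part you do carry out is correct and is the standard argument: field reduction of a line $\ell$ gives $\tilde\ell\cong\PG(2t-1,q)$ carrying the induced Desarguesian spread $\D_{2,t,q}$, the identity $\B(U)\cap\ell=\B(U\cap\tilde\ell)$ holds, and Theorem~\ref{Desarguesian upper bound} with $r=2$ yields $|\B(U)\cap\ell|\le (q^t-1)/(q-1)<q^t+1$; since a type~II set is a union of subspaces of projective dimension $r/2-1\ge 1$, it contains full lines while $\B(U)$ does not, and type~II is disposed of.

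The genuine gap is the type~I case, which is where the substance of the theorem lies; you identify the difficulty honestly but do not resolve it, and the invariant you propose is not strong enough as stated. The set of \emph{achievable} line-intersection sizes does not separate the two objects: every size $\theta_{k-1}(q)=(q^k-1)/(q-1)$, $k\le t$, realized by $\B(U)$ also has the form $a+b(q^{t/2}+1)$ realizable by a union of disjoint Baer subgeometries (for $r=3$, where every line meets each Baer subplane in $1$ or $q^{t/2}+1$ points, one checks $\theta_{k-1}(q)=N+b_kq^{t/2}$ with $N=(q^{t/2}-1)/(q-1)$ and $b_k=\theta_{k-t/2-1}(q)$, so the scattered spectrum is \emph{contained} in the type~I spectrum). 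To conclude one must prove a positive existence statement about an arbitrary configuration of $N$ pairwise disjoint subgeometries --- for instance, that some line meets exactly two of them in full Baer sublines, giving an intersection size with $b=2\notin\{0\}\cup\{\theta_j(q):j\ge 0\}$ and hence not of the form $\theta_{k-1}(q)$ --- or else run the argument in the other direction by showing that a $(q^{t/2}+1)$-point Baer subline cannot sit inside a scattered $\F_q$-linear set of rank $\le t$ on a line. Neither step is supplied, and the first is exactly the ``exclude every configuration'' obstacle you name. A further warning sign that more than the line spectrum is needed: for $t=2$ the maximum scattered subspace $\F_q^r\otimes v$ gives $\B(U)=\PG(r-1,q)$, a single Baer subgeometry, i.e.\ a type~I set, so the statement requires $t>2$ and no line-based invariant can possibly separate the objects in that case; your plan does not restrict $t$ and would there be attempting to prove something false.
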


\subsection{Two-weight codes}
An $\F_q$-linear $[n,k]$-code $C$ is a $k$-dimensional subspace of $\F_q^n$. Vectors belonging to $C$ are called {\em codewords} and the {\em weight} ${\mathrm{wt}}(c)$ of a codeword is the number of nonzero coordinates of $c$ with respect to some fixed basis of $\F_q^n$. The {\it distance} $d(c_1,c_2)$ between two codewords is the number of positions in which they have different coordinates and is thus equal to $\wt(c_1-c_2)$. If the nonzero minimum distance of $C$ is $d$, then the code is called an {\em $\F_q$-linear $[n,k,d]$-code}. In this case the code $C$ is an $e$-error correcting code with $e=\lfloor \frac{1}{2}(d-1)\rfloor$. 

Given a two-intersection set $\B(U)$ from a maximum scattered space $U$ in $\PG(rt-1,q)$, we can obtain a two-weight code as follows. We briefly sketch the construction and refer to Calderbank et al. \cite{CaKa1986} for further details. Put $n=|\B(U)|$ and define the code $C_U$ as the subspace generated by the columns of the $(n\times r)$-matrix $M_U$ whose rows are the coordinates of the points of $\B(U)$ with respect to some fixed frame of $\PG(r-1,q^t)$. Then $C_U$ has length $n$ and dimension $r$ (for this we use that $U$ is maximum scattered). Since $\B(U)$ has intersection numbers $m_1$ and $m_2$, the code $C_U$ is a two-weight code with weights $n-m_1$ and $n-m_2$. Hence we have the following theorem.
\begin{theorem}{\rm \cite{BlLa2000}}
If $U$ is a scattered space of dimension $m=rt/2$ w.r.t. ${\mathcal{D}}_{r,t,q}$, then $C_U$ is an $\F_q$-linear
$[(q^m-1)(q-1),r]$-code with weights
$$
q^{m-t}\left ( \frac{q^t-1}{q-1}\right ) ~~\mbox{and}~~q^{m-t+1}\left ( \frac{q^{t-1}-1}{q-1}\right ).
$$
\end{theorem}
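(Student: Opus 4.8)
The plan is to read off this statement as a corollary of the two immediately preceding results together with the Calderbank--Kantor construction summarised just before it, so that the proof reduces to bookkeeping. First I would fix the length: since $U$ is scattered of rank $m$, the Proposition identifying maximal-size $\F_q$-linear sets with scattered subspaces gives that $\B(U)$ attains the bound $(q^m-1)/(q-1)$, so $n:=|\B(U)|=(q^m-1)/(q-1)$, which is the length of $C_U$.

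Next I would invoke the two-intersection theorem, which says that $\B(U)$ meets every hyperplane of $\PG(r-1,q^t)$ in either $m_1=(q^{m-t}-1)/(q-1)$ or $m_2=(q^{m-t+1}-1)/(q-1)$ points. The construction of $C_U$ (the span of the columns of $M_U$, whose rows are the coordinate vectors of the points of $\B(U)$) makes each codeword correspond to a coefficient vector $a$, and the positions at which that codeword vanishes are exactly the points of $\B(U)$ lying on the hyperplane with coefficient vector $a$. Hence every nonzero weight equals $n$ minus one of the two intersection numbers, and $C_U$ is a two-weight code with weights $n-m_1$ and $n-m_2$.

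The final step is the arithmetic simplification
$$
n-m_1=\frac{q^m-q^{m-t}}{q-1}=q^{m-t}\left(\frac{q^t-1}{q-1}\right),\qquad n-m_2=\frac{q^m-q^{m-t+1}}{q-1}=q^{m-t+1}\left(\frac{q^{t-1}-1}{q-1}\right),
$$
which are exactly the two weights in the statement.

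The only point that needs genuine (if small) care is the dimension. I would argue it equals $r$ by showing that $\B(U)$ spans $\PG(r-1,q^t)$: since the larger intersection number $m_2=(q^{m-t+1}-1)/(q-1)$ is strictly less than $n$ (using $t\geq 2$), no hyperplane can contain all of $\B(U)$, so the $r$ columns of $M_U$ are linearly independent and $C_U$ has dimension $r$. This is precisely where the hypothesis that $U$ is maximum scattered (equivalently, that $\B(U)$ has full size) enters. There is no substantial obstacle beyond this; the entire content sits in the two preceding theorems, and the remaining work is the substitution above. I would also keep the field of definition of $C_U$ explicit, since the coordinate vectors of the points of $\B(U)$ live over $\F_{q^t}$.
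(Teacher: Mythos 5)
Your proposal is correct and follows essentially the same route as the paper, which derives the theorem from the preceding two-intersection theorem via the Calderbank--Kantor correspondence: length $n=(q^m-1)/(q-1)$ from scatteredness, weights $n-m_1$ and $n-m_2$, and dimension $r$ from the fact that $\B(U)$ is not contained in a hyperplane. Your explicit spanning argument for the dimension and your remark on the field of definition (the code naturally lives over $\F_{q^t}$) only make explicit what the paper leaves to the reference; note also that the length in the statement should read $(q^m-1)/(q-1)$, as you have it.
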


\subsection{Blocking sets}\label{subsec:blocking_sets}
Blocking sets have received a tremendous amount of attention in the past decades, and it is through
research in this area that scattered spaces came into the spotlight.
In particular, the results by Blokhuis et al. from \cite{BlStSz1999} where it was shown that
an $s$-fold blocking set in $\PG(2,q^4)$ of size
$s(q^4+1)+c$, with $s$ and $c$ small enough, contains the union of $s$ disjoint Baer subplanes, motivated
the paper by Ball et al. \cite{BaBlLa2000} from 2000. In the latter paper the authors constructed
a scattered linear set of rank 6, thus obtaining a $(q+1)$-fold
blocking set of size $(q+1)(q^4+q^2+1)$ in $\PG(2,q^4)$, and they proved that it is not the union of Baer subplanes
(see also Theorem \ref{thm:nonequiv}).

A more general result on scattered spaces and blocking sets is the following theorem from \cite{BlLa2000}. This shows that also scattered spaces which are not maximum generate blocking sets.
\begin{theorem}{\rm \cite{BlLa2000}}\\
A scattered subspace $U$ of dimension $m$, with respect to a
Desarguesian $t$-spread, in ${\mathrm V}(rt,q)$ induces a $\left (
\theta_{k-1}(q)  \right )$-fold blocking set $\B(U)$, with respect to
$(\frac{rt-m+k}{t}-1)$-dimensional subspaces in $\mathrm{PG}(r-1,q^t)$,
of size $\theta_{m-1}(q) $, where $1\leq k \leq m$ such that
$t~|~(m-k)$.
\end{theorem}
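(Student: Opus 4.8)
The plan is to translate the defining property of a blocking set through field reduction and reduce everything to a single dimension count inside a spread-induced subspace. First I would dispose of the size claim: since $U$ is scattered, distinct points of $\PG(U)$ lie in distinct spread elements, so the map sending a point of $\PG(U)$ to the unique element of $\D_{r,t,q}$ containing it is a bijection onto $\B(U)$. This gives $|\B(U)|=\theta_{m-1}(q)$ immediately.

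Next I would fix an arbitrary subspace $\Pi$ of $\PG(r-1,q^t)$ of projective dimension $s:=\frac{rt-m+k}{t}-1$; the hypothesis $t\mid(m-k)$ is exactly what guarantees that $s$ is an integer, since $rt-m+k=rt-(m-k)$, so that such subspaces exist and the statement is meaningful. I would then pass to the field-reduction image $W:={\mathcal{F}}_{r,t,q}(\Pi)$, an $\F_q$-subspace of $V(rt,q)$ of vector dimension $(s+1)t=rt-m+k$. The structural fact underpinning the whole argument is that $W$ is a union of spread elements: for a point $\langle v\rangle_{\F_{q^t}}\in\Pi$ the associated spread element $\varphi(\langle v\rangle_{\F_{q^t}})$ lies in $W$, these spread elements partition $W\setminus\{0\}$, and they are in bijection with the points of $\Pi$. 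In particular the spread elements contained in $W$ form a Desarguesian sub-$t$-spread of $W$.

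The core of the proof is then a Grassmann count inside $W$. In $V(rt,q)$ we have
$$\dim(U\cap W)\geq \dim U+\dim W-rt=m+(rt-m+k)-rt=k.$$
Because $U$ is scattered with respect to the full spread, its intersection $U\cap W$ is scattered with respect to the induced sub-spread of $W$, so every spread element of $W$ meets $U\cap W$ in at most a point. Hence the spread elements of $W$ meeting $U$ nontrivially are precisely those meeting the scattered subspace $U\cap W$, and by the same bijection as before there are exactly $\theta_{\dim(U\cap W)-1}(q)\geq\theta_{k-1}(q)$ of them. Translating back across ${\mathcal{F}}_{r,t,q}$, these spread elements are exactly the points of $\B(U)\cap\Pi$, whence $|\B(U)\cap\Pi|\geq\theta_{k-1}(q)$. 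Since $\Pi$ was an arbitrary $s$-dimensional subspace, $\B(U)$ is a $\theta_{k-1}(q)$-fold blocking set with respect to $s$-dimensional subspaces.

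The two counting statements (the size of $\B(U)$ and the number of sub-spread elements met by $U\cap W$) are routine consequences of scatteredness, and the dimension inequality is immediate. The step I expect to require the most care is the claim that ${\mathcal{F}}_{r,t,q}$ carries the subspace $\Pi$ to a subspace $W$ that is a union of spread elements equipped with an induced Desarguesian sub-spread. This compatibility between field reduction and $\D_{r,t,q}$ is precisely where the \emph{Desarguesian} hypothesis is genuinely used, and it is the hinge that lets a purely combinatorial Grassmann estimate in $V(rt,q)$ be read off as a blocking property in $\PG(r-1,q^t)$.
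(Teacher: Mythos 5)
Your argument is correct and is essentially the argument of the cited source \cite{BlLa2000} (the survey itself states the theorem without proof): field-reduce an arbitrary $\left(\frac{rt-m+k}{t}-1\right)$-dimensional subspace to a $(rt-m+k)$-dimensional $\F_q$-subspace $W$ partitioned by spread elements, apply the Grassmann bound $\dim(U\cap W)\geq k$, and use scatteredness twice to convert point counts in $U$ and in $U\cap W$ into counts of spread elements, giving $|\B(U)|=\theta_{m-1}(q)$ and $|\B(U)\cap\Pi|\geq\theta_{k-1}(q)$. The only point worth adding is the (routine) check that the relevant dimension $s=\frac{rt-m+k}{t}-1$ satisfies $0\leq s\leq r-1$, which follows from $1\leq k\leq m$ together with the upper bound $m\leq rt-t$ for scattered subspaces.
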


\subsection{Embeddings of Segre varieties}
The {\em Segre variety $S_{t,t}(q)$} is an algebraic variety in $\PG(t^2-1,q)\cong \PG(\F_q^t\otimes \F_q^t)$, whose points correspond to the fundamental tensors in $\F_q^t\otimes \F_q^t$.
The geometry of points and lines lying on the Segre variety $S_{t,t}(q)$ is a {\em semilinear space} (also called
a {\em product space} representing the product $\PG(t-1,q)\times \PG(t-1,q)$).
A \textit{projective embedding} of a semilinear space is an injective map into a projective space
mapping lines into lines. So, $S_{t,t}(q)$ is a projective embedding of the product space
$\PG(t-1,q)\times \PG(t-1,q)$ in $\PG(t^2-1,q)$.
By \cite{Zanella1996}, any embedded product space is an injective projection of a Segre variety.
Since the image of any embedding of $\PG(t-1,q)\times \PG(t-1,q)$ into a projective space $\PG(m,F)$ 
contains two disjoint $(t-1)$-subspaces, it holds that $m\ge 2t-1$. 
Therefore, a projective embedding of
$\PG(t-1,q)\times \PG(t-1,q)$ into $\PG(2t-1,F)$ is called a  \textit{minimum embedding}.
In \cite{LaShZa2015} a construction is given of such a minimum embedding using a maximum scattered
 subspace w.r.t. a Desarguesian spread.
\begin{theorem}{\rm \cite{LaShZa2015}}
If $U$ is a maximum scattered subspace w.r.t. ${\mathcal{D}}_{2,t,q}$, then
$\B(U)\subset \PG(2t-1,q)$ is a minimum embedding of the Segre variety $S_{t,t}(q)$.
\end{theorem}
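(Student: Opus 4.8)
The plan is to realise $\B(U)$ --- regarded as the union of the spread elements of $\D_{2,t,q}$ meeting $U$ --- as a grid of two rulings by $(t-1)$-subspaces of $\PG(2t-1,q)$, and to identify this grid with the product space $\PG(t-1,q)\times\PG(t-1,q)$. First I would fix the model $V=\F_{q^t}^2$ viewed as $V(2t,q)$, so that the elements of $\D_{2,t,q}$ are the $\F_{q^t}$-lines $\langle(a,b)\rangle_{\F_{q^t}}$, and the spread element through a nonzero $u$ is $S_u=\{\lambda u:\lambda\in\F_{q^t}\}$. Since $U$ is maximum scattered, Theorem \ref{Desarguesian upper bound} gives $\dim_{\F_q}U=t$, i.e.\ $U$ has projective dimension $t-1$. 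As the spread elements meeting $U$ are pairwise disjoint $(t-1)$-subspaces and $U$ is scattered, a first count shows that the point set $\B(U)$ has exactly $\left(\tfrac{q^t-1}{q-1}\right)^2$ points, matching the size of $S_{t,t}(q)$; this is the sanity check that the maximum (rather than merely maximally) scattered hypothesis is the right one.

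Next I would produce the two rulings. The first ruling consists of the spread elements $R\in\B(U)$ themselves. Because $U$ is scattered, each such $R$ meets $U$ in a single point $u_R$, and distinct spread elements meet $U$ in distinct points, so the first ruling is indexed bijectively by the points of $\PG(U)\cong\PG(t-1,q)$. For the second ruling I would use scalar multiplication: for $\mu\in\F_{q^t}^*$ the map $x\mapsto\mu x$ is an $\F_q$-linear map of $V$ fixing every spread element, so $\mu U$ is again a $(t-1)$-subspace meeting each $R\in\B(U)$ in the single point $\mu u_R$. The key lemma here is that the stabiliser of $U$ in $\F_{q^t}^*$ is exactly $\F_q^*$: otherwise $U$ would be a vector space over a proper subfield $\F_{q^s}\subsetneq\F_{q^t}$, forcing $U\cap S_u\supseteq\F_{q^s}u$ for each $0\ne u\in U$ and contradicting scatteredness. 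Granting this, the subspaces $\mu U$ with $\mu$ ranging over $\F_{q^t}^*/\F_q^*$ are $(q^t-1)/(q-1)$ distinct $(t-1)$-subspaces which (again by scatteredness) partition the points of $\B(U)$, each meeting each first-ruling element in exactly one point.

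With both rulings in hand I would set up the bijection $(P,\mu\F_q^*)\mapsto\langle\mu u_P\rangle$ from $\PG(U)\times(\F_{q^t}^*/\F_q^*)$ onto $\B(U)$ and check that it respects the product incidence structure. Fixing the first coordinate recovers inside each spread element $R_P=\F_{q^t}u_P$ the line structure of $\PG(\F_{q^t})\cong\PG(t-1,q)$ on the columns, while fixing the second coordinate recovers inside $\mu U$ the line structure of $\PG(U)\cong\PG(t-1,q)$ on the rows; since the lines of the product space are precisely the lines lying in a single row or a single column, and each such line is a genuine line of $\PG(2t-1,q)$ (being contained in a projective subspace), the map is an injective embedding of $\PG(t-1,q)\times\PG(t-1,q)$ into $\PG(2t-1,q)$ sending lines to lines. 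Finally, two distinct spread elements are disjoint $(t-1)$-subspaces spanning $\PG(2t-1,q)$, so the image spans the whole space and the ambient dimension $2t-1$ is minimal; invoking the result of \cite{Zanella1996} that every embedded product space is an injective projection of a Segre variety, $\B(U)$ is therefore a minimum embedding of $S_{t,t}(q)$.

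I expect the main obstacle to be the key lemma on the $\F_{q^t}^*$-stabiliser of $U$ together with the verification that the two rulings reproduce the product line-structure exactly, with no spurious collinearities across different rulings; once scatteredness is leveraged to pin down $S_u\cap U=\langle u\rangle$, the remaining steps are essentially bookkeeping with the point counts.
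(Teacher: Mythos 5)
Your proof is correct. The survey itself states this theorem without proof (it is quoted from \cite{LaShZa2015}), but your construction --- the two rulings of $\B(U)$ given by the spread elements on one hand and the scalar translates $\mu U$ on the other, with the stabiliser lemma showing that $\{\mu\in\F_{q^t}:\mu U=U\}\cup\{0\}=\F_q$ so that the second ruling is indexed by $\F_{q^t}^*/\F_q^*\cong\PG(t-1,q)$ --- is essentially the argument used in that reference, so there is nothing further to add.
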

The smallest non-trivial example of a Segre variety $S_{t,t}(q)$ is the hyperbolic quadric $\PG(3,q)$ for $t=2$. 
In \cite{LaShZa2015}, it is shown that there exists an embedding $\B(U)$ of $S_{t,t}(q)$ which is also a 
 hypersurface of degree $t$ in $\PG(2t-1,q)$, extending the properties of the hyperbolic quadric in $\PG(3,q)$.
By construction this embedding is covered by two systems of maximum subspaces (in this case $(t-1)$-dimensional). However, unlike the Segre variety, it turns out that
the embedding $\B(U)$ contains $t$ systems of maximum subspaces, and hence for $t>2$, contrary
to what one might expect, there exist systems of maximum subspaces which are not the image of maximum subspaces of the Segre variety, see \cite[Theorem 6]{LaShZa2015}.

\subsection{Pseudoreguli}

The concept of a pseudoregulus is a generalisation of the concept of a regulus. If $A$, $B$, $C$ are three
distinct $(n-1)$-dimensional subspaces contained in a common $(2n-1)$-space, then through each point
of any of these three subspaces there is exactly one line intersecting each of the spaces $A$, $B$ and $C$.
Such a line is called a {\it transversal line} w.r.t. $A$, $B$, and $C$. If each transversal line $\ell$ 
is given coordinates with respect to the frame $A\cap \ell$, $B\cap \ell$, and $C\cap \ell$, then the points on 
all the transversals with the same coordinates form an $(n-1)$-dimensional subspace. 
The set of these subspaces is called the
{\em regulus $R(A,B,C)$ determined by $A$, $B$ and $C$}, and the transversal lines w.r.t. $A$, $B$ and $C$ are also called {\it transversal lines of the regulus $R(A,B,C)$}.  A regulus consisting of $(n-1)$-dimensional subspaces is also called an {\em $(n-1)$-regulus}.
Equivalently, the regulus $R(A,B,C)$ is the family of maximal subspaces containing $A$, $B$, and $C$ of the 
unique Segre variety $S_{2,n}(q)$ containing $A$, $B$ and $C$. The transversal lines form the other family of maximal subspaces of $S_{2,n}(q)$.
If $n=2$ these become the two families of $q+1$ lines lying on a hyperbolic quadric in $\PG(3,q)$.
In 1980, Freeman constructed a set of $q^2+1$ lines in $\PG(3,q^2)$ which have exactly 2 transversal lines, called
a {\em pseudoregulus}. The $q^2+1$ lines are the extended lines of a Desarguesian spread in a 
Baer subgeometry $\PG(3,q)$, and the transversal lines are the two conjugate lines defining the spread.  
This idea was extended by Marino et al. in \cite{MaPoTr2007}, to a set of $q^3+1$ lines in $\PG(3,q^3)$ using a maximum scattered $\F_q$-linear set (of rank 6), obtaining the following.
\begin{proposition}{\rm\cite{MaPoTr2007}}
To any scattered $\F_q$-linear set $L$ of rank 6 of $\PG(3, q^3)$ is associated an $\F_q$-pseudoregulus $L$ consisting of all $(q^2 + q + 1)$-secant lines of $L$.
\end{proposition}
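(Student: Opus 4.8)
The plan is to reduce to a single explicit linear set and then read off the pseudoregulus from an auxiliary hypersurface. Since here $r=4$ and $t=3$, rank $6=rt/2$ is the maximal possible rank, so $\B(U)$ is a maximum scattered linear set; by the equivalence theorem (all maximum scattered $\F_q$-linear sets of $\PG(2n-1,q^3)$ are ${\mathrm{P\Gamma L}}$-equivalent, applied with $n=2$) every scattered $\F_q$-linear set of rank $6$ in $\PG(3,q^3)$ is projectively equivalent to one fixed example. As a collineation of $\PG(3,q^3)$ maps $(q^2+q+1)$-secants to $(q^2+q+1)$-secants and pseudoreguli to pseudoreguli, it suffices to prove the statement for the representative $L=\B(U)$ with $U=\{(x,x^q,y,y^q):x,y\in\F_{q^3}\}\subseteq \F_{q^3}^4$. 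First I would check that $U$ is scattered of rank $6$: if $\lambda(x,x^q,y,y^q)$ again lies in $U$ for some $\lambda\in\F_{q^3}$ and $(x,y)\neq 0$, then $\lambda^q=\lambda$, so $\lambda\in\F_q$, which is precisely the scattered condition.

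Next I would exhibit the candidate pseudoregulus. Let $m_1=\{\langle(a,0,c,0)\rangle\}$ and $m_2=\{\langle(0,b,0,d)\rangle\}$ be two skew lines, and for each $\langle(a,c)\rangle\in\PG(1,q^3)$ put $\ell_{a,c}=\langle(a,0,c,0),(0,a^q,0,c^q)\rangle$. A direct computation shows that a point $\langle(\sigma a,\tau a^q,\sigma c,\tau c^q)\rangle$ of $\ell_{a,c}$ lies in $L$ exactly when $\tau/\sigma^q$ is a $(q-1)$-st power in $\F_{q^3}^*$, i.e. lies in the group of $q^2+q+1$ norm-one elements; hence $|\ell_{a,c}\cap L|=q^2+q+1$, and the assignment $\langle(x,x^q,y,y^q)\rangle\mapsto\ell_{x,y}$ shows that the $q^3+1$ lines $\ell_{a,c}$ are pairwise skew and partition $L$. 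Each $\ell_{a,c}$ meets both $m_1$ and $m_2$, so these are common transversals; intrinsically they are the two lines fixed pointwise by the cyclic group $\{\mathrm{diag}(\lambda,\lambda^q,\lambda,\lambda^q):\lambda\in\F_{q^3}^*\}$ of order $q^2+q+1$ stabilising $L$. Write $\mathcal{L}=\{\ell_{a,c}\}$.

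The heart of the argument is to prove that $\mathcal{L}$ is \emph{all} of the $(q^2+q+1)$-secants and has \emph{exactly} the two transversals $m_1,m_2$, and for this I would introduce the degree-$(q+1)$ hypersurface $X=\{\langle(u,v,w,z)\rangle:vw^q=zu^q\}$, for which one checks immediately that $L\subseteq X$. If $\ell$ is any $(q^2+q+1)$-secant, then $\ell$ meets $X$ in at least $q^2+q+1>q+1$ points; since the restriction of $vw^q-zu^q$ to a parametrised line is a polynomial of degree at most $q+1$ in the parameter, it must vanish identically, so $\ell\subseteq X$. Thus every secant is a line of $X$, and it remains to determine all lines on $X$. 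Parametrising a line as $\langle A,B\rangle$ and expanding $vw^q-zu^q$ along it in the basis $1,t,t^q,t^{q+1}$, the condition $\langle A,B\rangle\subseteq X$ becomes the four parallelism relations $\check A\parallel\hat A^{(q)}$, $\check B\parallel\hat B^{(q)}$, $\check B\parallel\hat A^{(q)}$, $\check A\parallel\hat B^{(q)}$, where $\hat P=(u,w)$, $\check P=(v,z)$, and $(q)$ denotes the coordinatewise $q$-th power.

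A short case analysis on these relations then finishes the proof: if the two tops $\hat A,\hat B$ are nonzero and proportional, the whole line projects to one point of $\PG(1,q^3)$ and is some $\ell_{a,c}$; if they are nonzero and independent, the relations force $\check A=\check B=0$, i.e. $\langle A,B\rangle=m_1$; and the remaining degenerate cases (a top vanishing) give either a further $\ell_{a,c}$ or $m_2$. Hence the lines of $X$ are precisely $\mathcal{L}\cup\{m_1,m_2\}$. Since $m_1,m_2$ are external to $L$, the $(q^2+q+1)$-secants are exactly the lines of $\mathcal{L}$; and because the $\ell_{a,c}$ are pairwise skew, any transversal of $\mathcal{L}$ has all of its $q^3+1$ points on lines of $\mathcal{L}\subseteq X$, hence lies on $X$ and is therefore $m_1$ or $m_2$. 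Thus $\mathcal{L}$ is a set of $q^3+1$ lines with exactly two transversals, i.e. an $\F_q$-pseudoregulus, consisting precisely of the $(q^2+q+1)$-secant lines of $L$. I expect the line classification on $X$—controlling the degenerate parallelism cases—to be the main obstacle, together with the bookkeeping that makes the polynomial-degree bound force each secant onto $X$.
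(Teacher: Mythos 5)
Your computations for the canonical example $U=\{(x,x^q,y,y^q):x,y\in\F_{q^3}\}$ check out: the scatteredness verification, the count $|\ell_{a,c}\cap L|=q^2+q+1$ via the norm-one subgroup, the containment $L\subseteq X$ for $X:vw^q=zu^q$, the degree-$(q+1)$ argument forcing every $(q^2+q+1)$-secant onto $X$, and the classification of the lines of $X$ as $\{\ell_{a,c}\}\cup\{m_1,m_2\}$ are all correct, and they do establish the proposition for that one linear set. The genuine gap is your opening reduction. The proposition is a statement about an \emph{arbitrary} scattered $\F_q$-linear set of rank $6$, and you dispose of the general case by invoking the theorem that all maximum scattered $\F_q$-linear sets of $\PG(2n-1,q^3)$ are ${\mathrm{P\Gamma L}}$-equivalent. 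But that equivalence (Proposition 2.7 of \cite{MaPoTr2007} for $n=2$, Theorem 4 of \cite{LaVa2013} in general) is in the literature \emph{deduced from} the pseudoregulus structure: one first attaches to each such linear set its pseudoregulus with two transversal spaces, then uses transitivity on pairs of skew transversals together with the reconstruction of the $q-1$ scattered linear sets belonging to a given pseudoregulus to conclude equivalence. The paper itself signals this order of ideas, since the reconstruction theorem and the ``$q-1$ scattered linear sets per pseudoregulus'' corollary are presented as the way back from pseudoreguli to linear sets. As written, your argument is therefore circular, and what it actually proves is the existence of \emph{one} scattered linear set of rank $6$ carrying the asserted pseudoregulus.

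To close the gap you must treat a general $L$ directly, which is what the paragraph following the proposition sketches: by the canonical form for linear sets, any $\F_q$-linear set of rank $6$ in $\PG(3,q^3)$ is the projection of a subgeometry $\Sigma\cong\PG(5,q)$ of $\PG(5,q^3)$ from a line $\ell$ disjoint from $\Sigma$; scatteredness translates into a general-position condition on $\ell,\ell^\omega,\ell^{\omega^2}$, these three conjugate lines then define a Desarguesian plane spread $\D_{4,3,q}$ of $\Sigma$, and projecting that spread from $\ell$ produces the $q^3+1$ mutually disjoint $(q^2+q+1)$-secants, with $\ell^\omega$ and $\ell^{\omega^2}$ giving the two transversals. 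That route is intrinsic and needs no prior equivalence statement. Your hypersurface $X$ remains a nice device for the last step --- showing that the secants you exhibit are \emph{all} of the $(q^2+q+1)$-secants and that there are \emph{exactly} two transversals --- but it can only be deployed after the general $L$ has been put in a form to which it applies without assuming the equivalence theorem.
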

Instead of a Baer subgeometry in $\PG(3,q^2)$ the authors of \cite{MaPoTr2007} considered a subgeometry $\PG(5,q)$ in $\PG(5,q^3)$ and a Desarguesian plane spread $\D_{4,3,q}$ of $\PG(5,q)$ together with the three conjugate lines $\ell$, $\ell^\omega$ and $\ell^{\omega^2}$ defining the spread $\D_{4,3,q}$. Projecting the subgeometry from $\ell$ onto the 3-dimensional space $\langle \ell^\omega,\ell^{\omega^2}\rangle$ gives a scattered linear set of rank 6. The transversals to the associated $\F_q$-pseudoregulus are the lines $\ell^\omega$ and $\ell^{\omega^2}$.
\begin{remark}
Note that in $\PG(3,q^3)$ the pseudoregulus can thus be reconstructed from the scattered linear set of rank 6. Simply take all the $(q^2+q+1)$-secants. This was not the case in the previous situation in $\PG(3,q^2)$, where the pseudoregulus cannot be reconstructed from the Baer subgeometry $\PG(3,q)$ (or equivalently a maximum scattered linear set of rank 4). In this case any Desarguesian spread of the Baer subgeometry gives a pseudoregulus.
\end{remark}
These ideas were further developed in \cite{LaVa2013} in higher dimensional projective spaces, and the following theorem was proved.
\begin{theorem}{\rm \cite{LaVa2013}} If $L$ is a scattered $\F_q$-linear set of rank $3n$ in $\PG(2n-1,q^3)$, $n\geq 2$, then a line of $\PG(2n-1,q^3)$ meets $L$ in $0,1,q+1$ or $q^2+q+1$ points and every point of $L$ lies on exactly one $(q^2+q+1)$-secant to $L$. Two different $(q^2+q+1)$-secants to $L$ are disjoint and there exist exactly two $(n-1)$-spaces, meeting each of the $(q^2+q+1)$-secants in a point.
\end{theorem}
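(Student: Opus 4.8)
The plan is to first exploit the classification result quoted above. Since $\dim U = 3n = (2n)\cdot 3/2$, the linear set $L$ is maximum scattered, so by the theorem that all maximum scattered $\F_q$-linear sets in $\PG(2n-1,q^3)$ are $\mathrm{P\Gamma L}$-equivalent, I may replace $L$ by the explicit model $L=\B(U)$ with
$U=\{(x,x^q):x\in\F_{q^3}^n\}$, an $\F_q$-subspace of $V=\F_{q^3}^n\times\F_{q^3}^n$ of $\F_q$-dimension $3n$; one checks directly from $(cx,cx^q)=(y,y^q)\Rightarrow c\in\F_q$ that $U$ is scattered w.r.t. $\D_{2n,3,q}$. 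All four assertions are $\mathrm{P\Gamma L}$-invariant, so it suffices to verify them for this model. For the intersection numbers, let $m$ be a line of $\PG(2n-1,q^3)$ and $M\cong\PG(5,q)$ its field-reduction image, carrying the induced Desarguesian plane-spread $\D_{2,3,q}$. Because $\mathcal{F}(P)\subseteq M$ for $P\in m$, one has $m\cap L=\B(U\cap M)$, and $U\cap M$ is scattered w.r.t. $\D_{2,3,q}$; by Theorem~\ref{Desarguesian upper bound} with $r=2$, $t=3$ its projective dimension is at most $2$. As each point of $U\cap M$ lies in a distinct spread plane, counting gives $|m\cap L|\in\{0,1,q+1,q^2+q+1\}$, the first claim.

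\textbf{Construction of the secants and the two transversal spaces.} Next I would put $A=\{\langle(a,0)\rangle\}$ and $B=\{\langle(0,b)\rangle\}$, the two $(n-1)$-spaces spanned by the factors, and for each $\langle x\rangle\in\PG(n-1,q^3)$ set $m_x=\langle(x,0),(0,x^q)\rangle$. A short computation shows $\langle(x,\mu x^q)\rangle\in L$ precisely when $\mu$ lies in the image of $c\mapsto c^{q-1}$ on $\F_{q^3}^*$, a subgroup of order $(q^3-1)/(q-1)=q^2+q+1$; hence $m_x$ is a $(q^2+q+1)$-secant passing through $P=\langle(x,x^q)\rangle$ (take $\mu=1$) and meeting $A$ and $B$ in $\langle(x,0)\rangle$ and $\langle(0,x^q)\rangle$. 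If $\langle x\rangle\neq\langle x'\rangle$ the four spanning vectors are independent, so $m_x$ and $m_{x'}$ are skew; this proves that distinct secants are disjoint. Moreover every point of $L$ is $\langle(x,x^q)\rangle\in m_x$ for a unique $x$, so the $m_x$ partition $L$ and there are $(q^{3n}-1)/(q^3-1)$ of them.

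\textbf{Uniqueness of the secant, and of the transversal spaces.} It remains to show these are all the secants and that $A,B$ are the only transversals. A $(q^2+q+1)$-secant through $P$ corresponds to an $\F_q$-$3$-space $T\ni x_0$ (where $(x_0,x_0^q)=\mathcal{F}(P)\cap U$) with $\Pi=\{(x,x^q):x\in T\}$ having $2$-dimensional $\F_{q^3}$-span. Since $\langle T\rangle_{\F_{q^3}}$ is the first-factor projection of $\langle\Pi\rangle$, it has dimension $1$ or $2$. If $1$, then $T=\langle x_0\rangle_{\F_{q^3}}$ and the secant is $m_{x_0}$. If $2$, projection identifies $\langle\Pi\rangle$ with the graph of an $\F_{q^3}$-linear $g$ agreeing with $x\mapsto x^q$ on $T$; choosing an $\F_q$-basis $e_1,e_2,e_3$ of $T$ and comparing the unique $\F_{q^3}$-relation among $e_1^q,e_2^q,e_3^q$ with its Frobenius image forces the nonzero coefficients to be $\F_q$-proportional, hence a nontrivial $\F_q$-relation among $e_1,e_2,e_3$, a contradiction. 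Thus the secant through each point is unique. For the transversal spaces, a candidate $(n-1)$-space $N$ disjoint from $A$ and $B$ would be the graph of an $\F_{q^3}$-linear $h$ with $h(x)\parallel x^q$ for all $x$; additivity together with $n\geq 2$ (so that independent $x,y$ exist) forces $h(x)=\mu x^q$ with $\mu$ constant, which is not $\F_{q^3}$-linear. In general, writing the intersection point $N\cap m_x=\langle(\lambda_x x,\mu_x x^q)\rangle$, the points $\langle x\rangle$ with $\langle(x,0)\rangle\notin N$ and $\langle(0,x^q)\rangle\notin N$ satisfy $x\in\pi_A(N)\cap\{z:z^q\in\pi_B(N)\}$, so $\PG(n-1,q^3)$ is covered by this subspace together with $\PG(N\cap A)$ and $\PG(N\cap B)$; since $\F_{q^3}^n$ ($n\geq2$) cannot be covered by three proper subspaces (one needs at least $q^3+1>3$), one of them is the whole space, which gives $N=A$, $N=B$, or the already-excluded graph case. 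Hence exactly $A$ and $B$ are transversal.

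\textbf{Main obstacle.} I expect the crux to be the two uniqueness arguments, which both rest on the single phenomenon that an $\F_{q^3}$-linear object cannot coincide with the Frobenius-semilinear map $x\mapsto x^q$ on a spanning set without degenerating. Making the bookkeeping precise—ruling out the $2$-dimensional case for $\langle T\rangle_{\F_{q^3}}$ and running the covering estimate—is where the care lies; by contrast, the existence of the secants $m_x$ and the intersection numbers are routine once the model and Theorem~\ref{Desarguesian upper bound} are in place.
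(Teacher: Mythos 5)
Your proof is correct: the field-reduction argument for the intersection numbers (a line becomes a $\PG(5,q)$ carrying an induced $\D_{2,3,q}$, and Theorem~\ref{Desarguesian upper bound} caps the intersection at rank $3$), the explicit secants $m_x$ with their pairwise disjointness, the Frobenius-versus-linearity contradiction that kills a second secant through a point (the two relations $\sum a_ie_i^q=0$ and $\sum a_i^qe_i^q=0$ forcing an $\F_q$-relation among the $e_i$), and the covering-by-three-subspaces argument for the transversals all check out. The route differs from the source, though. You reduce everything to the canonical model $U=\{(x,x^q)\}$ by invoking the $\mathrm{P\Gamma L}$-equivalence of all maximum scattered $\F_q$-linear sets in $\PG(2n-1,q^3)$, so your argument sits logically downstream of that theorem; since the equivalence and the pseudoregulus structure both come from \cite{LaVa2013} (and \cite{MaPoTr2007} for $n=2$), you should say explicitly that the equivalence is established there independently of the secant structure — it is, via the representation of a rank-$3n$ linear set as the projection of a canonical subgeometry $\Sigma\cong\PG(3n-1,q)$ of $\PG(3n-1,q^3)$ from an $(n-1)$-space $\pi$ whose Galois conjugates $\pi,\pi^\sigma,\pi^{\sigma^2}$ are mutually disjoint and span the space — so there is no circularity, but the dependence deserves a sentence. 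In \cite{LaVa2013} the secants and the two transversal $(n-1)$-spaces are obtained from that same subgeometry picture (as projections of conjugate orbits of points of $\Sigma$ and of the conjugate vertices, respectively), which explains geometrically why there are exactly two transversals and yields the equivalence and the pseudoregulus in one stroke; your version trades that for explicit Frobenius computations in coordinates, which is more elementary once the canonical form is granted and has the virtue of reducing both uniqueness steps to the single principle that an $\F_{q^3}$-linear map cannot agree with $x\mapsto x^q$ on an $\F_q$-spanning set. The covering argument by the three subspaces $\pi_A(N)\cap\pi_B(N)^{q^{-1}}$, $N\cap A$, $N\cap B$ is a clean way to unify the case analysis for the transversals and is, if anything, tidier than a direct case split.
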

So in this case, the transversals are no longer lines, but $(n-1)$-dimensional subspaces, and again the pseudoregulus is uniquely determined by the maximum scattered linear set.
This leads to the following questions. Does every pseudoregulus (uniquely) determine a scattered subspace? And
how do we recognise a pseudoregulus, given a set of mutually disjoint lines? The following theorem gives a geometric characterisation of a regulus and pseudoregulus in $\PG(3,q^3)$, giving a partial answer to the second question.
In the following results, $\tilde{{\mathcal{L}}}$ denotes the set of points contained in the lines of the set $\mathcal L$.
\begin{theorem} {\rm \cite[Theorem 24]{LaVa2013}}
Let ${\mathcal{L}}$ be a set of $q^3+1$ mutually disjoint lines in $\PG(3,q^3)$, $q>2$. If each subline defined by three collinear points of $\tilde{{\mathcal{L}}}$ is contained in $\tilde{{\mathcal{L}}}$, then ${{\mathcal{L}}}$ is a regulus or a pseudoregulus.
\end{theorem}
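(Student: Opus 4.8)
The plan is to coordinatise $\mathcal{L}$ by a matrix (spread-set) model, turn the subline hypothesis into $\F_q$-linearity of that model, and finally separate the regulus from the pseudoregulus by deciding whether the model is a field.

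\textbf{Step 1 (coordinatisation).} I would fix two lines $\ell_1,\ell_2\in\mathcal{L}$ and write $\F_{q^3}^4=\F_{q^3}^2\oplus\F_{q^3}^2$ with $\ell_1=\{(v,0)\}$ and $\ell_2=\{(0,w)\}$. Every remaining line of $\mathcal{L}$ is disjoint from $\ell_1$ and $\ell_2$, hence equals $\ell_M=\{(v,Mv):v\in\F_{q^3}^2\}$ for a unique $M\in\mathrm{GL}(2,q^3)$, and $\ell_M\cap\ell_{M'}=\emptyset$ iff $M-M'$ is nonsingular. Thus $\mathcal{L}$ corresponds to a set $\mathcal{C}=\{0\}\cup\{M_i\}$ of $q^3$ matrices in $M_2(\F_{q^3})$ with pairwise nonsingular differences, where $\ell_1\leftrightarrow 0$ and $\ell_2\leftrightarrow\infty$. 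A quick computation also shows that the unique regulus through $\ell_2,\ell_M,\ell_{M'}$ is the ``affine line'' $\mathcal{R}=\{\ell_{(1-\lambda)M+\lambda M'}:\lambda\in\F_{q^3}\}\cup\{\ell_2\}$ in matrix space.

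\textbf{Step 2 (the crux: $\F_q$-linearity).} This is where I expect the real work. Fix $M,M'\in\mathcal{C}$ and consider $\mathcal{R}$ as above. Each of the $q^3+1$ transversals $t$ of $\mathcal{R}$ meets the three lines $\ell_M,\ell_{M'},\ell_2\in\mathcal{L}$ in three distinct collinear points of $\tilde{\mathcal{L}}$, so the hypothesis puts the whole $\F_q$-subline through them inside $\tilde{\mathcal{L}}$. Since the correspondence between $t$ and the parameter $\lambda$ of $\mathcal{R}$ is a projectivity, that subline is precisely $\{t\cap\ell_{(1-\lambda)M+\lambda M'}:\lambda\in\F_q\cup\{\infty\}\}$. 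Letting $t$ run through all transversals of $\mathcal{R}$ sweeps out every point of each line $\ell_{(1-\lambda)M+\lambda M'}$ with $\lambda\in\F_q$, so each such line lies entirely in $\tilde{\mathcal{L}}$; a line contained in $\tilde{\mathcal{L}}$ but not in $\mathcal{L}$ would meet every line of $\mathcal{L}$, in particular $\ell_2$, yet as a line of $\mathcal{R}$ it is disjoint from $\ell_2\in\mathcal{R}$, so it must belong to $\mathcal{L}$ itself. Therefore $(1-\lambda)M+\lambda M'\in\mathcal{C}$ for every $\lambda\in\F_q$. Because $0\in\mathcal{C}$ and $q>2$ (so that an $\F_q$-subline has more than three points and one can choose $\nu\in\F_q\setminus\{0,1\}$), a short elementary argument upgrades this to closure under addition and $\F_q$-scaling. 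Hence $\mathcal{C}$ is a $3$-dimensional $\F_q$-subspace of $M_2(\F_{q^3})$ all of whose nonzero elements are nonsingular, i.e.\ an $\F_q$-linear spread set. I would stress that $q>2$ is indispensable here: for $q=2$ a subline is just three collinear points and the hypothesis carries no information.

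\textbf{Step 3 (regulus vs.\ pseudoregulus) and the main obstacle.} With $\mathcal{C}$ now $\F_q$-linear, the dichotomy is governed by whether $\mathcal{C}$ is closed under multiplication by scalars from $\F_{q^3}$. If it is, then $\mathcal{C}$ is $1$-dimensional over $\F_{q^3}$, say $\mathcal{C}=\F_{q^3}N$, and comparing with Step 1 shows that $\mathcal{L}=\{\ell_{\lambda N}:\lambda\in\F_{q^3}\}\cup\{\ell_2\}$ is exactly the regulus through $\ell_1,\ell_2,\ell_N$. If $\mathcal{C}$ is not $\F_{q^3}$-linear, then it is a genuinely $\F_q$-linear scattered spread set, and $\mathcal{L}$ should be the set of $(q^2+q+1)$-secants of the associated maximum scattered $\F_q$-linear set of rank $6$ in $\PG(3,q^3)$, i.e.\ a pseudoregulus. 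The hard part is precisely this last identification: one must rule out any intermediate possibility, equivalently (in the transversal picture) show that the number of directions $\langle a\rangle$ on $\ell_1$ for which $W_a=\{Ma:M\in\mathcal{C}\}$ is a single $\F_{q^3}$-point is either $q^3+1$ (regulus) or exactly $2$ (pseudoregulus). I would handle this by invoking the structure theory of linear sets, in particular the classification that all maximum scattered $\F_q$-linear sets of $\PG(3,q^3)$ are $\mathrm{P\Gamma L}$-equivalent, which pins down every non-field case as the standard pseudoregulus.
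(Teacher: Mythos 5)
Your Steps 1 and 2 are correct and well executed: the graph coordinatisation, the identification of the regulus through $\ell_2,\ell_M,\ell_{M'}$ with the affine $\F_{q^3}$-line $(1-\lambda)M+\lambda M'$, the observation that the subline cut out on a transversal is exactly the $\F_q$-rational part of that parametrisation, the sweeping argument, the exclusion of a transversal line hiding inside $\tilde{\mathcal{L}}$, and the upgrade from affine $\F_q$-closure to $\F_q$-linearity (where you rightly flag that $q>2$ is essential) are all sound. (The survey only cites this result from [LaVa2013], so there is no in-paper proof to compare against; I am judging the argument on its own terms.)

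The genuine gap is the second half of Step 3. Having shown that $\mathcal{C}$ is a $3$-dimensional $\F_q$-subspace of $M_2(\F_{q^3})$ all of whose nonzero elements are invertible and which is not of the form $\F_{q^3}N$, you still owe the proof that $\mathcal{L}$ is a pseudoregulus, and the tool you propose cannot be applied as stated. A pseudoregulus in this setting is \emph{defined} as the set of $(q^2+q+1)$-secants of a scattered $\F_q$-linear set of rank $6$ in $\PG(3,q^3)$; the classification ``all maximum scattered $\F_q$-linear sets of $\PG(3,q^3)$ are $\mathrm{P\Gamma L}$-equivalent'' is a statement about rank-$6$ subspaces of $V(12,q)$, whereas the object you have produced, $\mathcal{C}$, is a rank-$3$ structure living in the \emph{matrix} space $\PG(M_2(\F_{q^3}))$, not a rank-$6$ linear set in the $\PG(3,q^3)$ containing $\mathcal{L}$. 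To invoke that classification you would first have to construct the rank-$6$ scattered subspace $U$ with $\B(U)$ partitioned by $(q^2+q+1)$-point pieces on the lines of $\mathcal{L}$ --- e.g.\ by gluing the rank-$3$ linear sets that $\mathcal{C}$ induces on each line --- and verify both that these pieces span a single $6$-dimensional $\F_q$-space scattered w.r.t.\ $\D_{4,3,q}$ and that $\mathcal{L}$ is precisely its set of long secants; none of this is in the proposal. Equivalently, what is missing is a proof that every non-$\F_{q^3}$-linear such $\mathcal{C}$ is equivalent, under the allowed changes of coordinates, to $\{\mathrm{diag}(\alpha,\alpha^{q^i})N:\alpha\in\F_{q^3}\}$, and this is the actual content of the theorem rather than a routine citation. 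Note also that you have only exploited reguli through the fixed line $\ell_2$; applying the same subline argument to reguli through $\ell_1$ (or through an arbitrary pair of lines of $\mathcal{L}$) yields the further constraint that $\{M^{-1}:M\in\mathcal{C}\setminus\{0\}\}\cup\{0\}$ is again an $\F_q$-subspace, and this extra information is very likely needed to carry out the final identification.
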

Also the first question was answered in \cite{LaVa2013}: it is possible to reconstruct the maximum scattered $\F_q$-linear set from an $\F_q$-pseudoregulus in $\PG(2n-1,q^3)$, but this scattered linear set is not unique. 
\begin{theorem}{\rm \cite{LaVa2013}}
Let $q > 2$, $n\geq 2$. Let ${\mathcal{L}}$ be a pseudoregulus in $\PG(2n-1, q^3)$, let $P$ be a point of $\tilde{{\mathcal{L}}}$, on the line $\ell$ of ${{\mathcal{L}}}$, not lying on one of the transversal spaces to ${{\mathcal{L}}}$. Let $T = \{\ell_1,\ell_2,\ldots\}$ be the set of $(q + 1)$-secants through $P$ to $\tilde{{\mathcal{L}}}$, let $P(T)$ be the set of points on the lines of $T$ in $\tilde{{\mathcal{L}}}$. Let $\pi_i$ be the plane $\langle \ell,\ell_i\rangle$, and let $D_i$ be the set of directions on $\ell$, determined by the intersection of $\pi_i$ with $\tilde{{\mathcal{L}}}$. Then $D_i = D_1$, for all $i$, and $P(T)$, together with the points of $D_1$, form a scattered $\F_q$-linear set of rank $3n$ determining the pseudoregulus ${{\mathcal{L}}}$.
\end{theorem}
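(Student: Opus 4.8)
The plan is to reduce the problem to an explicit coordinate model and then carry out a direction computation there. Since $\mathcal{L}$ is a pseudoregulus it is, by definition, the set of $(q^2+q+1)$-secants of some maximum scattered $\F_q$-linear set, and by the $\mathrm{P\Gamma L}$-equivalence of such sets in $\PG(2n-1,q^3)$ I may choose coordinates so that $\mathcal{L}$ is in standard form. Concretely, let $\sigma$ be the order-three Frobenius collineation $x\mapsto x^q$ of $\Pi^\ast=\PG(3n-1,q^3)$, let $\Sigma=\mathrm{Fix}(\sigma)\cong\PG(3n-1,q)$ be the associated subgeometry, and let $S,S^\sigma,S^{\sigma^2}$ be three pairwise disjoint conjugate $(n-1)$-spaces spanning $\Pi^\ast$. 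Projecting $\Sigma$ from $S$ onto $\Pi=\langle S^\sigma,S^{\sigma^2}\rangle\cong\PG(2n-1,q^3)$ yields $L=p_S(\Sigma)$, and the pseudoregulus is $\mathcal{L}=\{\ell_A:=\langle A,A^\sigma\rangle : A\in S^\sigma\}$ with transversal spaces $\Gamma_1=S^\sigma$ and $\Gamma_2=S^{\sigma^2}$. Here each $\ell_A$ is the image under $p_S$ of the $\sigma$-invariant plane $\langle A,A^\sigma,A^{\sigma^2}\rangle$, whose $\PG(2,q)$-subplane $\langle A,A^\sigma,A^{\sigma^2}\rangle\cap\Sigma$ projects bijectively onto the $(q^2+q+1)$ points of $L$ on $\ell_A$. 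The given point is then $P=p_S(P')$ for a subgeometry point $P'$ in this subplane, with $\ell=\ell_A$.

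The first substantive step is to identify the secants in $T$. I would show that the $(q+1)$-secants to $\tilde{\mathcal{L}}$ through $P$ are exactly the projections $p_S(m)$ of the $\F_q$-lines $m\subset\Sigma$ through $P'$. One direction is immediate, since scatteredness of $L$ (equivalently, injectivity of $p_S|_\Sigma$) forces each such $m$ to map to $q+1$ distinct collinear points; the converse requires a short dimension-and-counting argument ruling out any further $(q+1)$-secant through $P$. This gives a bijection between $T$ and the pencil of $\F_q$-lines of $\Sigma$ through $P'$. With $\ell_i=p_S(m_i)$ fixed, the plane $\pi_i=\langle\ell,\ell_i\rangle$ lifts to the $\F_{q^3}$-span $\langle\langle A,A^\sigma,A^{\sigma^2}\rangle, m_i, S\rangle$, and $D_i$ is read off as the set of directions determined on $\ell$ (as line at infinity) by the affine point set $(\pi_i\cap\tilde{\mathcal{L}})\setminus\ell$.

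The coherence statement $D_i=D_1$ is, I expect, the main obstacle. After lifting, computing $\pi_i\cap\tilde{\mathcal{L}}$ and the induced directions on $\ell$ reduces to an analysis inside the three-dimensional $\sigma$-conjugate configuration attached to $A$, together with the incidences of the conjugate lines $\langle C,C^\sigma\rangle$ that meet $\pi_i$. The content to be extracted is that the resulting directions depend only on the data $A,A^\sigma,A^{\sigma^2}$ and on the fixed semilinear map $\sigma$, and \emph{not} on the choice of $m_i$ in the pencil through $P'$; proving this invariance coordinate-free, rather than line by line, is the delicate part, and I would attack it by expressing each direction through the action of $\sigma$ on $\langle P',S\rangle$ and checking that the $m_i$-dependence cancels. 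Once $D_i=D_1$ is established, $D_1$ is a well-defined set of points on $\ell$ and $M:=P(T)\cup D_1$ is unambiguously defined.

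Finally I would verify that $M$ is a scattered $\F_q$-linear set of rank $3n$ determining $\mathcal{L}$. The natural route is to exhibit $M$ as the projection $p_S(\Sigma')$ of a second subgeometry $\Sigma'\cong\PG(3n-1,q)$, obtained by re-gluing the $\F_q$-structure along the lines of $\Sigma$ through $P'$ and the centre $S$; scatteredness and rank $3n$ then follow from injectivity of $p_S|_{\Sigma'}$ exactly as for $\Sigma$. That $M$ determines $\mathcal{L}$ follows by checking that its $(q^2+q+1)$-secants are again precisely the lines $\ell_A$, i.e. the images of the $\sigma$-invariant planes, which are unaffected by the re-gluing since the transversal spaces $S^\sigma,S^{\sigma^2}$ are preserved. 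This also explains the non-uniqueness recorded in the text: distinct admissible choices of $P$ produce distinct subgeometries $\Sigma'$, all projecting to scattered linear sets that share the same pseudoregulus $\mathcal{L}$.
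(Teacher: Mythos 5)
The survey states this result without proof (it is quoted from \cite{LaVa2013}), so the comparison below is with the argument your sketch would actually have to deliver. Your overall strategy --- normalise the pseudoregulus to the standard model obtained by projecting a subgeometry $\Sigma\cong\PG(3n-1,q)$ from $S$ onto $\langle S^\sigma,S^{\sigma^2}\rangle$ --- is the right setting and is the one used in \cite{LaVa2013}. But there are concrete gaps.

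First, the normalisation of $P$ is wrong as written. After you fix the standard form for $\mathcal{L}$, the point $P$ is an \emph{arbitrary} point of $\tilde{\mathcal{L}}$ off the transversals, and such a point is in general \emph{not} of the form $p_S(P')$ with $P'\in\Sigma$: on each line $\ell$ of $\mathcal{L}$ the linear set $L=p_S(\Sigma)$ occupies only $q^2+q+1$ of the $q^3-1$ admissible points. (Indeed, by the Corollary following the theorem, $\tilde{\mathcal{L}}$ minus the transversals is partitioned into $q-1$ scattered linear sets, all with pseudoregulus $\mathcal{L}$; in coordinates these are the sets $B(U_c)$ with $U_c=\{(u,cu^q)\}$.) You must either invoke the transitivity of the stabiliser of $\mathcal{L}$ on these points, or replace $L$ by the unique sibling $L_c$ containing $P$, before writing $P=p_S(P')$. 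This also shows that your closing picture is off: once $P\in L$ is arranged, the thing to prove is simply $P(T)\cup D_1=L$; there is no ``re-gluing'' and no second subgeometry $\Sigma'$ in the proof of the statement itself --- the $q-1$ distinct linear sets come from the $q-1$ possible normalisations, not from a modification of $\Sigma$.

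Second, the two claims that carry the entire weight --- that the $(q+1)$-secants to $\tilde{\mathcal{L}}$ through $P$ are exactly the projected $\F_q$-lines through $P'$, and that $D_i=D_1$ --- are only announced, not proved. Neither is routine. Even the ``immediate'' direction requires showing that a projected $\F_q$-line meets no point of $\tilde{\mathcal{L}}\setminus L_c$; the converse requires showing that no line through $P$ picks up $q+1$ points of $\tilde{\mathcal{L}}$ scattered across several of the sibling sets $L_{c'}$ or the transversals. Writing a line through $P=\langle(u_0,u_0^q)\rangle$ with direction $(a,b)$, the points of $\tilde{\mathcal{L}}$ on it are governed by an equation of the form $\alpha t+\beta t^{q}+\gamma t^{q+1}=0$, and controlling its root set is exactly where the hypothesis $q>2$ must enter; your sketch never uses $q>2$, which is a reliable sign that the genuine difficulty has not been engaged. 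Until these two steps are carried out, the proposal is a plausible plan rather than a proof.
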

It follows from the proof of the above theorem in \cite{LaVa2013}, that the scattered linear set is not uniquely determined by the pseudoregulus. In fact we have the following.
\begin{corollary}
Let $q > 2$. If ${{\mathcal{L}}}$ is a pseudoregulus in $\PG(2n-1, q^3)$, then there are $q-1$ scattered $\F_q$-linear sets having ${{\mathcal{L}}}$ as associated pseudoregulus.
\end{corollary}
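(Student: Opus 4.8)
The plan is to derive the corollary from the reconstruction in the preceding theorem by turning the freedom in that construction into a clean parameter count. First I would fix the two transversal $(n-1)$-spaces $T_1,T_2$ of $\mathcal{L}$; by the preceding theorem these spaces are intrinsic to the pseudoregulus and are forced to be the transversal spaces of \emph{any} scattered $\fq$-linear set $L$ with associated pseudoregulus $\mathcal{L}$. Since the lines of $\mathcal{L}$ are exactly the $(q^2+q+1)$-secants of $L$, the set $L$ is the disjoint union over $\ell\in\mathcal{L}$ of the $q^2+q+1$ points $L\cap\ell$, and each $L\cap\ell$ is a scattered $\fq$-linear set of rank $3$ on the line $\ell\cong\PG(1,q^3)$ whose two points on the transversal spaces, namely $T_1\cap\ell$ and $T_2\cap\ell$, are \emph{not} among its points. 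Thus the global object is assembled from its restrictions to the lines of $\mathcal{L}$, and the task reduces to classifying the admissible restrictions on a single line and controlling how they must fit together.

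For the local classification I would coordinatise $\ell$ so that $T_1\cap\ell=\langle(1,0)\rangle$ and $T_2\cap\ell=\langle(0,1)\rangle$. Using that a scattered $\fq$-linear set of rank $3$ on a line is unique up to $\PGL$ (as recorded in the equivalence theorems above), one sees that such a set is stabilised by a cyclic group of order $q^2+q+1$ whose two fixed points are its distinguished exterior pair; requiring this pair to be $\{\langle(1,0)\rangle,\langle(0,1)\rangle\}$ then forces the set into the form $\{\langle(1,t)\rangle : t\in\mu\,\ker\mathrm{N}\}$, where $\mathrm{N}=\mathrm{N}_{\F_{q^3}/\fq}$ is the norm and $\mu\in\F_{q^3}^*$. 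The key computation is that $x\mapsto x^{q-1}$ surjects $\F_{q^3}^*$ onto $\ker\mathrm{N}$ (since $q-1\mid q^3-1$ and $\ker\mathrm{N}$ has order $q^2+q+1$), so the canonical scattered set is the coset $\ker\mathrm{N}$ and the diagonal maps $\mathrm{diag}(1,\mu)$, fixing both transversal points, translate it to $\mu\,\ker\mathrm{N}$. Two such cosets coincide as point sets precisely when their representatives have equal norm, whence there are exactly $q-1$ admissible restrictions on each fixed line, indexed by $\mathrm{N}(\mu)\in\fq^*$.

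For the gluing I would show the per-line norm invariant is constant across $\mathcal{L}$ and that each value is realised by a genuine rank-$3n$ linear set. Here I would invoke the reconstruction of the preceding theorem: starting from a point $P$ and the pencil of $(q+1)$-secants through it, that proof produces one scattered $L$ and simultaneously transports a coordinate frame coherently from $\ell$ to every other line via the projective correspondence $T_1\cap\ell\leftrightarrow T_2\cap\ell$ induced by $\mathcal{L}$. This coherence forces the norm value to agree on all lines, so a global scattered linear set with pseudoregulus $\mathcal{L}$ corresponds to a single parameter in $\F_{q^3}^*/\ker\mathrm{N}\cong\fq^*$. Conversely, applying to one reconstructed $L$ the collineations that fix $T_1$ and $T_2$ and act as the scalar $\mu$ relative to the frame, for $\mu$ ranging over coset representatives of $\F_{q^3}^*/\ker\mathrm{N}$, produces $q-1$ pairwise distinct scattered linear sets, all stabilising $\mathcal{L}$. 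Combining the two directions gives exactly $q-1$.

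I expect the gluing to be the main obstacle. The local count is essentially elementary once the norm description is in place, but proving that the norm value cannot vary from line to line -- equivalently, that the $q^3+1$ rank-$3$ pieces assemble into a single $\fq$-subspace of projective dimension $3n-1$ under field reduction rather than into an incoherent union -- requires the explicit projective linkage between the transversal spaces that is the heart of the preceding theorem. The delicate point is ruling out mixed configurations in which different lines carry different norm values yet the $(q^2+q+1)$-secants still happen to be exactly $\mathcal{L}$; it is precisely the rigidity supplied by that theorem's construction which excludes them.
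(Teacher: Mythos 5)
Your overall strategy (classify the admissible restrictions of $L$ to each line of $\mathcal{L}$, then glue) can in principle be made to work, but as written it has two genuine gaps, and the second one is the entire content of the corollary. First, the local count. The only constraint you actually extract from the configuration is that $L\cap\ell$ is a scattered $\F_q$-linear set of rank $3$ on $\ell\cong\PG(1,q^3)$ avoiding the two points $T_1\cap\ell$ and $T_2\cap\ell$. That alone does not force $L\cap\ell$ to be a coset $\mu\ker N$ (with $N=N_{\F_{q^3}/\F_q}$) in the frame determined by those two points: a rank-$3$ scattered linear set on a line misses $q^3-q^2-q$ points, and by $3$-transitivity of $\mathrm{PGL}(2,q^3)$ it can be positioned so as to avoid any prescribed pair, so there are far more than $q-1$ rank-$3$ scattered linear sets on $\ell$ missing $T_1\cap\ell$ and $T_2\cap\ell$. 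What you need is that $\{T_1\cap\ell,\,T_2\cap\ell\}$ coincides with the canonical fixed pair of the stabilizer of $L\cap\ell$; you assert this (``requiring this pair to be $\{\langle(1,0)\rangle,\langle(0,1)\rangle\}$'') rather than prove it, and proving it essentially amounts to showing that $L$ is of pseudoregulus type with respect to $T_1,T_2$. Second, the gluing. The coset parameter on a single line is only defined relative to a choice of vector representatives for $T_1\cap\ell$ and $T_2\cap\ell$: rescaling the representative of $T_2\cap\ell$ by $c$ replaces the coset $\mu\ker N$ by $(\mu/c)\ker N$, hence translates the norm value by the arbitrary factor $N(c)^{-1}$. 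So the claim that ``the norm value agrees on all lines'' has no content until the frames on all $q^3+1$ lines are chosen coherently, and that coherence is exactly the step you defer to ``the heart of the preceding theorem''. You correctly flag this as the main obstacle, but without it your argument only bounds the number of linear sets by $(q-1)^{q^3+1}$.

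The corollary is stated in the paper as a consequence of the proof of the reconstruction theorem in \cite{LaVa2013}, and the intended count is much shorter and bypasses both issues. Fix one line $\ell\in\mathcal{L}$ and let $\Pi$ be the set of $q^3-1$ points of $\ell$ not on the transversal spaces. The reconstruction (together with the uniqueness established in its proof) assigns to each $P\in\Pi$ the unique scattered $\F_q$-linear set $L_P$ of rank $3n$ containing $P$ and having $\mathcal{L}$ as associated pseudoregulus. Consequently every scattered linear set $L$ with pseudoregulus $\mathcal{L}$ equals $L_P$ for each of the $q^2+q+1$ points $P\in L\cap\ell\subseteq\Pi$, two such linear sets are either equal or meet $\ell$ in disjoint subsets of $\Pi$, and every point of $\Pi$ lies in some $L_P$. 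The linear sets in question therefore correspond to the classes of a partition of $\Pi$ into sets of size $q^2+q+1$, giving $(q^3-1)/(q^2+q+1)=q-1$. If you prefer to salvage your local--global route, the cleanest repair is to first establish that every such $L$ has the form $\{\langle u+\rho\Phi(u)\rangle : u\in V_1\setminus\{0\}\}$ for a fixed $\F_{q^3}$-semilinear $\Phi:V_1\rightarrow V_2$ and some $\rho\in\F_{q^3}^*$; your local classification and your gluing then collapse into the single statement that $\rho$ and $\rho'$ give the same set if and only if $N(\rho)=N(\rho')$.
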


The property that any maximum scattered $\F_q$-linear set in $\PG(2n-1,q^t)$ gives rise to a pseudoregulus no longer holds for $t>3$. This was further investigated in \cite{LuMaPoTr2014}, introducing {\em maximum scattered linear sets of pseudoregulus type}. 
Also, all maximum scattered $\F_q$-linear sets in $\PG(2n-1,q^t)$ are no longer (projectively) equivalent for $t>3$. Indeed the following was proved in \cite{LuMaPoTr2014}.
\begin{theorem}{\rm \cite{LuMaPoTr2014}} For $n\geq 2$ and $t>3$, (i) there exist maximum scattered $\F_q$-linear sets which are not of pseudoregulus type, and (ii) there are $\varphi(t)/2$ orbits of maximum scattered $\F_q$-linear sets of $\PG(2n-1,q^t)$ of  pseudoregulus type under the action of the collineation group ${\mathrm{P\Gamma L}}(2n,q^t)$.
\end{theorem}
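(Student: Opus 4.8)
The plan is to treat the two assertions separately, building both on an explicit family of canonical models. Write $W=\F_{q^t}^{2n}=V_1\oplus V_2$ with $V_1=\langle e_1,\dots,e_n\rangle$ and $V_2=\langle e_{n+1},\dots,e_{2n}\rangle$, and for each integer $s$ with $1\le s\le t-1$ and $(s,t)=1$ set
\[
L_s=\{\langle (v,v^{q^s})\rangle~:~v\in \F_{q^t}^n\setminus\{0\}\}\subset \PG(2n-1,q^t),
\]
where $v^{q^s}$ denotes the coordinatewise Frobenius. First I would verify that each $L_s$ is a maximum scattered $\F_q$-linear set (with $r=2n$ its rank is $nt=rt/2$, matching the bound of Theorem \ref{Desarguesian upper bound}), the scattered property being exactly where the coprimality $(s,t)=1$ enters: a common divisor $d>1$ of $s$ and $t$ would force the map $x\mapsto x^{q^s}$ to fix the subfield $\F_{q^d}$ pointwise and destroy the scattered condition. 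I would then show these are precisely the pseudoregulus-type examples: the transversal $(n-1)$-spaces $\PG(V_1)$ and $\PG(V_2)$ meet every maximal secant of $L_s$, and conversely any maximum scattered linear set of pseudoregulus type, having two prescribed transversal spaces, can be moved by an element of $\mathrm{P\Gamma L}(2n,q^t)$ so that these spaces become $\PG(V_1)$ and $\PG(V_2)$; after this normalisation the linear set is the graph of an invertible $\F_q$-semilinear map $\F_{q^t}^n\to\F_{q^t}^n$, which scaling reduces to a single Frobenius power, i.e.\ to some $L_s$.

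For the equivalence classification in (ii), the easy direction is that $L_s\cong L_{t-s}$: the involution $(v,w)\mapsto(w,v)$ exchanging $V_1$ and $V_2$ carries the graph of $v\mapsto v^{q^s}$ to the graph of the inverse map $w\mapsto w^{q^{t-s}}$, so $s$ and $-s\bmod t$ give equivalent linear sets. The harder direction is to prove these are the only coincidences. Here the key point is that the two transversal spaces are canonically attached to the pseudoregulus determined by $L_s$ (they are the only two $(n-1)$-spaces meeting all maximal secants, by the results quoted from \cite{LaVa2013}), so any collineation sending $L_s$ to $L_{s'}$ must permute $\{\PG(V_1),\PG(V_2)\}$. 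Tracking such a collineation, whose semilinear part is some field automorphism $x\mapsto x^{q^j}$, through the defining graph, and using that Frobenius powers commute (so conjugating $x\mapsto x^{q^s}$ by $x\mapsto x^{q^j}$ returns $x\mapsto x^{q^s}$), I would conclude that fixing the two spaces forces $s'=s$ while swapping them forces $s'=t-s$. Since the residues $s$ with $1\le s\le t-1$ and $(s,t)=1$ number $\varphi(t)$ and pair off as $\{s,t-s\}$ (with $s\neq t-s$ because $t>3$ rules out $2s=t$ having a coprime solution), this yields exactly $\varphi(t)/2$ orbits.

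For (i), the idea is that once $t>3$ there is enough room to build a maximum scattered linear set from an $\F_q$-linear map that is not a scaled monomial. Taking $n=1$ first, I would exhibit an $\F_q$-linearized polynomial $f$ with more than one term for which
\[
L_f=\{\langle (x,f(x))\rangle~:~x\in \F_{q^t}^*\}
\]
is scattered of rank $t$; such binomial examples exist precisely for $t\ge 4$. One then checks that the maximal secants of $L_f$ do not form a pseudoregulus, equivalently that $L_f$ is not $\mathrm{P\Gamma L}$-equivalent to any $L_s$, since the normalisation argument above would force a pseudoregulus-type set to arise from a monomial map, while $f$ is not monomial-equivalent. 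Embedding this example blockwise then yields non-pseudoregulus maximum scattered linear sets in $\PG(2n-1,q^t)$ for all $n\ge 2$.

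The main obstacle I anticipate is the rigidity step in (ii): showing that no collineation can alter the Frobenius exponent beyond replacing $s$ by $\pm s\bmod t$. This requires carefully separating the linear and field-automorphism parts of an element of $\mathrm{P\Gamma L}(2n,q^t)$ and exploiting the canonicity of the transversal spaces, and it is where the precise structure of pseudoreguli from \cite{LaVa2013} does the real work. For (i), the subsidiary difficulty is the explicit verification that the chosen binomial is simultaneously scattered and genuinely non-monomial, a direct but delicate computation with linearized polynomials that is valid only once $t>3$.
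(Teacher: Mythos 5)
The survey itself does not prove this theorem: it is quoted from \cite{LuMaPoTr2014}, so there is no in-paper argument to compare against, and your proposal has to be measured against the strategy of that source. For part (ii) your reconstruction is essentially the correct one: the canonical models $L_s$, the computation showing that every point of $L_s$ has weight $\gcd(s,t)$ (so scattered iff $(s,t)=1$), the equivalence $L_s\sim L_{t-s}$ via the block swap, and the rigidity argument resting on the fact that the pair of transversal $(n-1)$-spaces is canonically attached to the linear set are all the actual ingredients. Two steps deserve more care than you give them. Reducing a general pseudoregulus-type set with transversals $\PG(V_1),\PG(V_2)$ to some $L_s$ produces a graph $v\mapsto \rho A v^{q^s}$; absorbing $A$ needs a further $\mathrm{GL}(n,q^t)$ change of basis on one block (harmless), but absorbing the scalar $\rho$ is precisely where $n\ge 2$ is used (for $n=1$ the orbit count is different, cf.\ \cite{CsZaPrep}), so it should not be dismissed as ``scaling''. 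Also, the fact that the underlying $\F_q$-space is the graph of a map that is \emph{semilinear} over $\F_{q^t}$ (rather than merely $\F_q$-linear) has to be extracted from the pseudoregulus structure; that is the substantive structural lemma, not a formality.

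The genuine gap is in part (i). You assert that scattered binomials exist on $\PG(1,q^t)$ ``precisely for $t\ge 4$'', but the standard binomial $x\mapsto \delta x^{q}+x^{q^{t-1}}$ is scattered only under a norm condition $N_{q^t/q}(\delta)\ne 1$, which has no nonzero solution for $q=2$; since the statement carries no hypothesis on $q$, your route does not cover all cases as written. More importantly, ``embedding blockwise'' is doing unexamined work: the direct sum $U_f^{\oplus n}\subset\F_{q^t}^{2n}$ is indeed still scattered of rank $nt$ (the relevant intersection is an intersection of solution spaces each of $\F_q$-dimension at most $1$), but you give no argument that the resulting linear set in $\PG(2n-1,q^t)$ fails to be of pseudoregulus type --- being defined by a non-monomial $f$ in one coordinate system does not by itself exclude a pseudoregulus structure in another. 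You would need to analyse the $\frac{q^t-1}{q-1}$-secant lines of the blockwise sum (or some other invariant of pseudoregulus-type sets, such as the existence of a pair of disjoint transversal $(n-1)$-spaces) to finish, which is exactly how \cite{LuMaPoTr2014} distinguishes its non-pseudoregulus examples.
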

In the statement of this theorem $\varphi(t)$ denotes Euler's totient function, i.e. the number of integers smaller than $t$ and relatively prime to $t$. Also the maximum scattered $\F_q$-linear sets in the theorem are always of rank $nt$,  and all maximum scattered $\F_q$-linear sets of pseudoregulus type are of the form $L_{\rho,f}$, see \cite{LuMaPoTr2014}.

The authors of \cite{LuMaPoTr2014} also introduced maximum scattered $\F_q$-linear sets of pseudoregulus type on a projective line, and these were further investigated in \cite{CsZaPrep}. Of course there is no longer a pseudoregulus associated to such a linear set, but the definition is stimulated from the algebraic formula for maximum scattered $\F_q$-linear sets $L_{\rho,f}$ of pseudoregulus type in higher dimensions. 

\subsection{Semifield theory}\label{subsec:semifields}
The term "semifield" is a short term introduced by Knuth in 1965 \cite{Knuth1965} for a non-associative division algebra. The algebraic structures themselves were first studied by Dickson in 1906 \cite{Dickson1906}. We restrict ourselves to the finite case.
A {\it finite semifield} $\mathbb S$ is an algebra with at least two elements, and two binary operations $+$ and $\circ$, satisfying the following axioms.
\begin{itemize}
\item[(S1)] $({\mathbb{S}},+)$ is a group with identity element $0$.
\item[(S2)] $x\circ(y+z) =x\circ y + x\circ z$ and $(x+y)\circ z = x\circ z + y
\circ z$, for all $x,y,z \in {\mathbb{S}}$.
\item[(S3)] $x\circ y =0$ implies $x=0$ or $y=0$.
\item[(S4)] $\exists 1 \in {\mathbb{S}}$ such that $1\circ x = x \circ 1 = x$, for all $x \in {\mathbb{S}}$.
\end{itemize}
An algebra satisfying all of the axioms of a semifield except (S4) is called a {\it pre-semifield}.
An important example of a finite pre-semifield is the Generalized Twisted Field (GTF) due to Albert \cite{Albert1961}, obtained by defining the multiplication $x \circ y=xy-cx^\alpha y^\beta$ on the finite 
field $\F_{q^n}$, where $\alpha, \beta \in {\mathrm{Aut}}(\F_{q^n})$ with $Fix(\alpha)=Fix(\beta)=\F_q$, and $c\in \F_{q^n}$ with $N_{\F_{q^n}/ \F_q}(c)\neq 1$.

Finite semifields have been studied intensively by a variety of mathematicians and they have many connections with structures in Galois Geometry. We refer to \cite{Knuth1965}, \cite{Kantor2006}, \cite{Lavrauw2013} and to the chapter \cite{LaPo2011} and its references for the necessary background, an overview of the results, connections, and further reading. 
A semifield $\SSS$ defines a projective plane $\pi(\SSS)$, called a {\em semifield plane},
and isomorphism classes of semifield planes correspond to {\em isotopism classes} of semifields, a result from Albert \cite{Albert1960}.
We restrict ourselves here to what we call scattered semifields, which  
we define as follows. 
Suppose $\SSS$ is a semifield which is $l$-dimensional over its left nucleus and $ls$-dimensional over its center $\F_q$.
Consider the set $R(\SSS)$ of endomorphisms of $\F_{q^s}^l$ corresponding to right multiplication in the semifield $\SSS$. This set is called a {\em semifield spread set}, and $R(\SSS)$ is an $\F_q$-linear set of rank $ls$ in the projective space $\PG(l^2-1,q^s)$. Now if $R(\SSS)$ is a scattered $\F_q$-linear set, then $\SSS$ is called a {\em scattered semifield}.
It is known that $R(\SSS)$ is disjoint from the $(l-2)$--th secant variety $\Omega(S_{l,l}(q^s))$ of
a Segre variety $S_{l,l}(q^s)$ and this connection also gives a nice interpretation of the isotopism classes.
\footnote{A relation between the isotopism classes of semifields and equivalence classes of {\em embeddings} of Segre varieties can be found in \cite{LaZa2014}.}
\begin{theorem}[from \cite{Lavrauw2011}]\label{thm:spreadequivalent}
The isotopism class of a semifield $\SSS$ corresponds to the orbit of $R(\SSS)$ under the action
of the group ${\mathcal{H}}\leq {\mathrm{PGL}}(l^2,q^s)$ preserving the two systems of maximal
subspaces contained in the Segre variety $S_{l,l}(q^s)$ in $\PG(l^2-1,q^s)$.
\end{theorem}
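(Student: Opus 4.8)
The plan is to translate isotopism into the two-sided action of $\mathrm{GL}(l,q^s)\times\mathrm{GL}(l,q^s)$ on the space $M_l(\F_{q^s})$ of $l\times l$ matrices over $\F_{q^s}$, and then to identify this action geometrically with the stabiliser of the two systems of maximal subspaces of the Segre variety. Recall first that for $y\in\SSS$ the right multiplication $\rho_y\colon x\mapsto x\circ y$ is $\F_{q^s}$-linear, because $\F_{q^s}$ is the left nucleus; hence $R(\SSS)=\{\rho_y : y\in\SSS\}$ is an $\F_q$-subspace of $M_l(\F_{q^s})$ of $\F_q$-dimension $ls$, additive closure following from the distributive laws (S2), and its nonzero elements are nonsingular by (S3). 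This is exactly the statement, recalled above, that its projectivisation is an $\F_q$-linear set of rank $ls$ in $\PG(l^2-1,q^s)$ disjoint from the singular locus $\Omega(S_{l,l}(q^s))$.

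First I would prove the forward implication. Suppose $\SSS_1$ and $\SSS_2$ are isotopic, via nonsingular $\F_q$-linear maps $F,G,H$ with $F(x)\circ_2 G(y)=H(x\circ_1 y)$ for all $x,y$. Writing $\rho^i_y$ for right multiplication in $\SSS_i$ and fixing $y$, evaluating at an arbitrary $x$ gives $\rho^2_{G(y)}\circ F=H\circ\rho^1_y$, so that $\rho^2_{G(y)}=H\,\rho^1_y\,F^{-1}$. Letting $y$ run through $\SSS_1$, and using that $G$ is a bijection, yields the set equality $R(\SSS_2)=H\,R(\SSS_1)\,F^{-1}$. Thus the two spread sets lie in a common orbit under the maps $A\mapsto HAF^{-1}$.

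For the converse I would begin with a set equality $R(\SSS_2)=H\,R(\SSS_1)\,G$ and reconstruct the missing third map. Since $A\mapsto HAG$ is an additive bijection carrying the $\F_q$-subspace $R(\SSS_1)$ onto $R(\SSS_2)$, composing it with the $\F_q$-linear parametrisations $y\mapsto\rho^1_y$ and $z\mapsto\rho^2_z$ gives a nonsingular $\F_q$-linear map $\SSS_1\to\SSS_2$, which I would call $G'$; reversing the computation of the previous step then exhibits $(G^{-1},G',H)$ as an isotopism. At this point I would invoke Albert's normalisation, that every pre-semifield in the orbit is isotopic to a genuine semifield, so that no information relevant to the isotopism class is lost when passing between the affine subspace $R(\SSS)$ and its projective linear set.

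Finally I would carry out the geometric identification. Viewing $M_l(\F_{q^s})$ as $\F_{q^s}^l\otimes\F_{q^s}^l$, the Segre variety $S_{l,l}(q^s)$ consists of the rank-one tensors and its two systems of maximal subspaces are the $\langle u\rangle\otimes\F_{q^s}^l$ and the $\F_{q^s}^l\otimes\langle v\rangle$. A transformation $A\mapsto HAG$ sends the rank-one matrix corresponding to $u\otimes v$ to the one corresponding to $Hu\otimes G^{\top}v$, so it preserves the Segre variety and fixes each of its two systems, whereas the transpose $A\mapsto A^{\top}$ interchanges them. Using the classical description of the stabiliser of $S_{l,l}(q^s)$ in $\mathrm{PGL}(l^2,q^s)$ as generated by the two-sided multiplications together with the transpose, one identifies $\mathcal H$ with $\{A\mapsto HAG : H,G\in\mathrm{GL}(l,q^s)\}$ modulo scalars, and combining this with the two implications gives the asserted bijection. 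The hard part will be the converse bookkeeping: ensuring that the orbit of the projective linear set $R(\SSS)$ retains exactly the additive data needed to recover the isotopism, and that every orbit representative can be normalised to an actual semifield rather than merely a pre-semifield; the identification of $\mathcal H$ itself, though conceptually central, is a known property of Segre varieties.
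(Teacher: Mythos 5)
First, a point of reference: the paper does not prove this theorem at all --- it is quoted verbatim from \cite{Lavrauw2011} --- so there is no in-text proof to compare against. Your skeleton (isotopism $\leftrightarrow$ two-sided multiplication $A\mapsto HAF^{-1}$ on spread sets, plus the classical identification of the stabiliser of the two systems of $S_{l,l}(q^s)$ with the two-sided multiplications modulo scalars) is the standard route and is essentially the one taken in the cited source. Your converse direction is fine: reconstructing $G'$ from $\rho^2_{G'(y)}=H\rho^1_y G$ and checking that $(G^{-1},G',H)$ is an isotopism is a correct and complete computation.

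The genuine gap is in the forward direction, and it is exactly where the substance of the theorem lies. An isotopism $(F,G,H)$ consists a priori only of additive bijections (at best $\F_q$-linear, i.e.\ linear over the \emph{center}, as you yourself write), whereas the group $\mathcal{H}\leq\mathrm{PGL}(l^2,q^s)$ acts by $\F_{q^s}$-linear maps of $M_l(\F_{q^s})$. Your identity $R(\SSS_2)=H\,R(\SSS_1)\,F^{-1}$ is correct, but with $F,H$ merely additive the map $A\mapsto HAF^{-1}$ need not stabilise $M_l(\F_{q^s})$ as an $\F_{q^s}$-space, let alone define an element of $\mathrm{PGL}(l^2,q^s)$; so you have not produced an element of $\mathcal{H}$. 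The missing step is to show that $F$ and $H$ are forced to be $\F_{q^s}$-semilinear with a common companion automorphism. This follows from a centraliser argument you never make: putting $x=1$ in $\mu(x\circ y)=\mu(x)\circ y$ shows that the centraliser of $R(\SSS_i)$ in $\mathrm{End}_{\F_p}(\SSS_i)$ is precisely left multiplication by the left nucleus $\cong\F_{q^s}$, and the relation $R(\SSS_2)=HR(\SSS_1)F^{-1}$ (together with $\mathrm{id}\in R(\SSS_i)$) forces $H$ and $F$ to intertwine these two copies of $\F_{q^s}$ via the same automorphism $\sigma$. Even after this, $A\mapsto HAF^{-1}$ is only $\sigma$-semilinear on $M_l(\F_{q^s})$, i.e.\ lands in $\mathrm{P\Gamma L}(l^2,q^s)$; one must either argue that $\sigma$ can be normalised away or read $\mathcal{H}$ as the stabiliser of the two systems inside the full collineation group, which is how the source handles it. Treating this linearity issue as immediate, as your proposal does, skips the actual content of the theorem; the remaining worries you flag (recovering the $\F_q$-subspace from the projective linear set, and Albert's pre-semifield normalisation) are real but comparatively routine, since scalar multiples of a spread set are again spread sets of isotopic pre-semifields and the scalars already lie in $\mathcal{H}$.
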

It follows from Theorem \ref{thm:spreadequivalent} that being "scattered" is an isotopism invariant, and this
makes it a useful tool to investigate the isotopism problem, which is usually a very hard problem: given two semifields
$\SSS_1$ and $\SSS_2$, decide whether they are isotopic or not.

The power of this geometric approach is illustrated in
\cite{CaPoTr2006} ($l=s=2$), and \cite{MaPoTr2007} ($l=2$, $s=3$), and
by the recent results from \cite{LaMaPoTr2015a} and \cite{LaMaPoTr2015b}, where the structure of $R(\SSS)$ is used to solve the isotopism problem regarding the semifields constructed by Dempwolff in \cite{Dempwolff2013}.

\subsection{Splashes of subgeometries}

Given a subgeometry $\pi_0$ and a line $l_\infty$ in a projective space $\pi$, by extending the hyperplanes of $\pi_0$ to hyperplanes of $\pi$ and intersecting these with the line $l_\infty$, one obtains a set of points on the projective line $l_\infty$. Precisely, if we denote the set of hyperplanes of a projective space $\pi$ by ${\mathcal{H}}(\pi)$, and $\overline{U}$ denotes the extension of a subspace $U$ of the subgeometry $\pi_0$ to a subspace of $\pi$,  then we obtain the set of points $\{ l_\infty \cap \overline{H} ~:~H \in {\mathcal{H}}(\pi_0)\}$.
These sets have been studied in \cite{LaZa2015} generalising the initial studies in \cite{BaJa15} where the {\it splash of $\pi_0$ on $l_\infty$} was introduced for Desarguesian planes and cubic extensions, i.e. for a subplane $\pi_0\cong\PG(2,q)$ in $\pi\cong\PG(2,q^3)$. If $l_\infty$ is tangent to (respectively, disjoint from) $\pi_0$, then a splash is called the {\it tangent splash} (respectively, {\it external} or {\em exterior splash}) 
{of $\pi_0$ on $l_\infty$}. Note that when $l_\infty$ is secant to $\pi_0$, the splash of $\pi_0$ on $l_\infty$ is just a subline.

One of the main results of \cite{LaZa2015} shows the equivalence between splashes and linear sets on a projective line.
\begin{theorem}
Let $r,n>1$.
If $S=S(\pi_0,l_\infty)$ is the splash of the $q$-subgeometry $\pi_0$ of $\PG(r-1,q^n)$ on the line $l_\infty$, 
then $S$ is an $\F_q$-linear set 
of rank $r$. Conversely, if $S$ is an ${\mathbb{F}}_q$-linear set of rank $r$ on the line $l_\infty\cong\PG(1,q^n)$, then there exists an embedding of
$l_\infty$ in $\PG(r-1,q^n)$ and a $q$-subgeometry $\pi_0$ of $\PG(r-1,q^n)$ such that $S=S(\pi_0,l_\infty)$.
\end{theorem}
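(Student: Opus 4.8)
The plan is to translate the splash operation directly into the field-reduction description of linear sets from Section~\ref{subsec:linear_sets}, and then to reverse that translation for the converse. For the forward direction I would first coordinatise. Take $\pi_0$ to be the canonical $q$-subgeometry of $\pi=\PG(r-1,q^n)$, so that its hyperplanes are exactly the hyperplanes of $\pi$ with a coefficient vector $a\in\F_q^r$; write $\overline{H}_a$ for the extension to $\pi$ of the hyperplane $H_a$ of $\pi_0$. Fix two points $P=(p_1,\dots,p_r)$ and $Q=(q_1,\dots,q_r)$ over $\F_{q^n}$ spanning $l_\infty$, and write a general point of $l_\infty$ as $\langle \lambda P+\mu Q\rangle$. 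The incidence $\lambda P+\mu Q\in\overline{H}_a$ reads $\lambda\,(a\cdot p)+\mu\,(a\cdot q)=0$, where $a\cdot p=\sum_i a_ip_i$ and $a\cdot q=\sum_i a_iq_i$ lie in $\F_{q^n}$. Hence, whenever $(a\cdot p,a\cdot q)\neq(0,0)$, the point $l_\infty\cap\overline{H}_a$ is $\langle(-\,a\cdot q,\ a\cdot p)\rangle$. This exhibits an $\F_q$-linear map $\psi\colon\F_q^r\to\F_{q^n}^2$, $\psi(a)=(-\,a\cdot q,\ a\cdot p)$, with $S(\pi_0,l_\infty)=\{\langle\psi(a)\rangle:\psi(a)\neq0\}$.

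The key observation is then that, identifying $\F_{q^n}^2$ with $V(2n,q)$ and putting $U:=\mathrm{im}\,\psi$, the set $S$ is exactly the linear set $\B(U)$ in the sense of Section~\ref{subsec:linear_sets} (applied with $r=2$, $t=n$ there). Since $\psi$ is $\F_q$-linear, $U$ is an $\F_q$-subspace and $\dim_{\F_q}U=r-\dim_{\F_q}\ker\psi$, so $\B(U)$ has rank $r$ exactly when $\psi$ is injective. Now $a\in\ker\psi$ means $a\cdot p=a\cdot q=0$, i.e.\ $\overline{H}_a\supseteq l_\infty$; dually, this says the projection centre $l_\infty^{*}$ meets the dual subgeometry $\pi_0^{*}$. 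Thus under the natural general-position hypothesis that no extended hyperplane of $\pi_0$ contains $l_\infty$ (equivalently $l_\infty^{*}\cap\pi_0^{*}=\emptyset$, the non-degeneracy implicit in the tangent/external splash), $\psi$ is injective and $S$ is an $\F_q$-linear set of rank $r$.

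For the converse I would run the construction backwards. Given an $\F_q$-linear set $S$ of rank $r$ on $l_\infty\cong\PG(1,q^n)$, write $S=\B(U)$ with $U\leq\F_{q^n}^2$ of $\F_q$-dimension $r$, and choose an $\F_q$-basis $u_1,\dots,u_r$ of $U$, say $u_i=(c_i,d_i)$. Define $P=(d_1,\dots,d_r)$ and $Q=(-c_1,\dots,-c_r)$ in $\F_{q^n}^r$, let $l_\infty:=\langle P,Q\rangle$ be the corresponding embedded line in $\PG(r-1,q^n)$, and take $\pi_0$ to be the canonical $q$-subgeometry. Unwinding the definition of $\psi$ gives $\psi(e_i)=u_i$, hence $\mathrm{im}\,\psi=U$, and the forward computation yields $S(\pi_0,l_\infty)=\B(U)=S$. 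Here one only has to check that $P,Q$ are $\F_{q^n}$-independent, so that $l_\infty$ is a genuine line and the embedding is well defined; this is equivalent to $\langle U\rangle_{\F_{q^n}}=\F_{q^n}^2$, i.e.\ to $S$ spanning $l_\infty$, which is the standard non-degeneracy assumption on a linear set of rank $r$ on a line.

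The main obstacle is bookkeeping the two non-degeneracy conditions and matching them to the hypotheses: in the forward direction the rank drops below $r$ precisely when some extended hyperplane swallows $l_\infty$, so I must argue that this is excluded by the splash being tangent or external (phrased dually as $l_\infty^{*}\cap\pi_0^{*}=\emptyset$), and in the converse direction I must rule out the collapse of $\langle P,Q\rangle$ to a point. Everything else is the routine verification that the $\F_q$-linear map $\psi$ realises the dictionary between ``extend the $\F_q$-hyperplanes and cut with $l_\infty$'' and the field-reduction definition $S=\B(U)$; once $\psi$ is in hand both implications are immediate, and the whole statement can equivalently be read as the fact that a splash is the dual of a projection of a subgeometry from a centre, which is a linear set of rank $r$.
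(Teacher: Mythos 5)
The survey does not actually prove this statement; it is quoted from Lavrauw--Zanella \cite{LaZa2015}, so your argument can only be judged on its own terms and against that source. Your route --- coordinatise $\pi_0$ as the canonical subgeometry, observe that cutting $l_\infty=\langle P,Q\rangle$ with the extended hyperplane $\overline{H}_a$ gives the point with coordinates $(-a\cdot Q,\,a\cdot P)$ in the frame $P,Q$, and package this as an $\F_q$-linear map $\psi\colon\F_q^r\to\F_{q^n}^2$ with $S=\B(\mathrm{im}\,\psi)$ --- is precisely the dual of the Lunardon--Polverino projection description of linear sets, which is also the mechanism behind the proof in \cite{LaZa2015}. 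The converse, taking a basis $(c_i,d_i)$ of $U$ and setting $P=(d_1,\dots,d_r)$, $Q=(-c_1,\dots,-c_r)$ so that $\psi(e_i)=u_i$, is the correct inverse dictionary, and your check that $P,Q$ are $\F_{q^n}$-independent exactly when $\langle U\rangle_{\F_{q^n}}=\F_{q^n}^2$ (i.e.\ $S$ spans $l_\infty$) is right. So the proof is essentially correct and essentially the intended one.

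The one imprecise step is your justification of the injectivity of $\psi$ in the forward direction. You correctly identify that $\ker\psi\neq 0$ exactly when some extended hyperplane of $\pi_0$ contains $l_\infty$ (dually, $l_\infty^{*}\cap\pi_0^{*}\neq\emptyset$), but this is \emph{not} a consequence of $l_\infty$ being tangent to or disjoint from $\pi_0$. For example, with $r=4$ and $n=3$, an extended hyperplane $\overline{H}\cong\PG(2,q^3)$ meets $\pi_0$ in a subplane $\PG(2,q)$, and only $q^5+q^4+1$ of the $q^6+q^3+1$ lines of $\overline{H}$ meet that subplane; any of the remaining lines is external to $\pi_0$ yet lies inside $\overline{H}$, and for such a line $\mathrm{im}\,\psi$ has $\F_q$-dimension strictly less than $r$. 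The condition you need is instead forced by the splash being a set of \emph{points} at all: if $l_\infty\subseteq\overline{H}_a$ then $l_\infty\cap\overline{H}_a$ is not a point, so the hypothesis that no extended hyperplane of $\pi_0$ contains $l_\infty$ is implicit in the definition of $S(\pi_0,l_\infty)$ (and is explicit in \cite{LaZa2015}), rather than something to be deduced from tangency or externality. With that reading, your argument is complete.
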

Also, it is shown that the number of hyperplanes through a point determines the weight of that point in the linear set. This leads to the following characterisation of scattered linear sets.
\begin{theorem}
Let $S$ be the splash of a subgeometry $\pi_0\cong\PG(r-1,q)$ of $\pi\cong\PG(r-1,q^n)$ on $l_\infty\cong\PG(1,q^n)$. Then $S$ is a scattered linear set if and only if $S$ is an external splash, where every point of $S$ is on exactly one hyperplane of $\pi_0$.
\end{theorem}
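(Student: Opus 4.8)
The plan is to reduce the statement to a precise dictionary between, on one side, the number of hyperplanes of $\pi_0$ passing through a splash point $P$, and on the other side, the weight of $P$ as a point of the linear set $S$. Once this dictionary is in place, both the ``scattered'' condition and the ``external/one hyperplane'' condition become statements about weights, and the equivalence drops out. First I would fix coordinates so that $\pi_0$ is the standard $\F_q$-subgeometry $\{(x_0:\dots:x_{r-1}) : x_i \in \F_q\}$ of $\pi=\PG(r-1,q^n)$, so that the hyperplanes of $\pi_0$ are exactly the $H_{\mathbf a}=\{\sum_i a_i x_i=0\}$ with $\mathbf a=(a_0,\dots,a_{r-1})\in \F_q^r\setminus\{0\}$, taken up to an $\F_q$-scalar. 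Writing $l_\infty=\{s\mathbf u+t\mathbf v : (s:t)\in\PG(1,q^n)\}$ with $\mathbf u,\mathbf v\in\F_{q^n}^r$, a direct substitution shows that $l_\infty\cap\overline{H_{\mathbf a}}$ is the point $(\phi_{\mathbf v}(\mathbf a):-\phi_{\mathbf u}(\mathbf a))$, where $\phi_{\mathbf u}(\mathbf a)=\sum_i a_i u_i$ and $\phi_{\mathbf v}(\mathbf a)=\sum_i a_i v_i$ are $\F_q$-linear maps $\F_q^r\to\F_{q^n}$.

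Consequently the splash is $S=\B(W)$ for the $\F_q$-subspace $W=\{(\phi_{\mathbf v}(\mathbf a),-\phi_{\mathbf u}(\mathbf a)):\mathbf a\in\F_q^r\}\subseteq\F_{q^n}^2$, which recovers the linear-set description from the first theorem of this subsection and has rank $r$ exactly when the map $\Phi:\mathbf a\mapsto(\phi_{\mathbf v}(\mathbf a),-\phi_{\mathbf u}(\mathbf a))$ is injective (equivalently, no rational hyperplane contains $l_\infty$); since rank $r$ is guaranteed, I may assume $\Phi$ injective. The heart of the argument, and the step I expect to be the main obstacle, is the following counting identity. For a splash point $P=(s:t)$, the hyperplanes $H_{\mathbf a}$ with $l_\infty\cap\overline{H_{\mathbf a}}=P$ are precisely the nonzero $\mathbf a$ (up to $\F_q$-scalar) with $\Phi(\mathbf a)\in\langle(s,t)\rangle_{\F_{q^n}}$, the spread element of $\D_{2,n,q}$ corresponding to $P$. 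As $\Phi$ is injective and $\F_q$-linear, these $\mathbf a$ are in bijection with the nonzero vectors of $W\cap\langle(s,t)\rangle_{\F_{q^n}}$ up to scalar, and the $\F_q$-dimension of this intersection is by definition the weight $w(P)$. Hence the number of hyperplanes of $\pi_0$ through $P$ equals $(q^{w(P)}-1)/(q-1)$, which is exactly the assertion (quoted just before the theorem) that the number of hyperplanes through a point determines its weight; I would either cite this from \cite{LaZa2015} or supply the short computation above.

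With this identity in hand the theorem follows quickly. By definition $S$ is scattered if and only if every point has weight one, and by the identity this happens if and only if every point of $S$ lies on exactly $(q^1-1)/(q-1)=1$ hyperplane of $\pi_0$. It then remains only to see that this forces $S$ to be external. If $l_\infty$ were secant or tangent to $\pi_0$, any intersection point $R\in l_\infty\cap\pi_0$ would be a splash point lying on every hyperplane of $\pi_0$ through $R$, of which there are $(q^{r-1}-1)/(q-1)$; since the line $l_\infty$ forces $r\geq 3$, this number is at least $q+1>1$, so $R$ would have weight $r-1>1$ and $S$ could not be scattered. Thus a scattered splash is automatically external, while conversely an external splash on which every point lies on a unique hyperplane has all weights equal to one and is therefore scattered, completing the equivalence.
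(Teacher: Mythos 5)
Your argument is correct and follows exactly the route the paper indicates: the survey justifies this theorem by the remark (from \cite{LaZa2015}) that the number of hyperplanes of $\pi_0$ through a splash point determines its weight in the linear set, and your counting identity $\#\{H \ni P\} = (q^{w(P)}-1)/(q-1)$ is precisely that correspondence, made explicit via the map $\Phi$ and combined with the same weight-one reformulation of scatteredness. Your appeal to the injectivity of $\Phi$ is likewise the same non-degeneracy input the paper supplies through its preceding theorem asserting that the splash has rank $r$, so nothing essential is missing.
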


\subsection{MRD-codes}
A {\em rank metric code} is a set of $(m\times n)$-matrices over some field where the distance between two codewords is defined as  the matrix rank of their difference. A {\em maximum rank distance code} ({\em MRD} code) is a rank metric code of maximum size with respect to its distance. Precisely, if $C\subset M_{m\times n}(\F_q)$, with minimum distance $d$, then $C$ is an MRD code if and only if $|C|=q^{nk}$ with $k=m-d+1$.
Such codes were constructed by Delsarte \cite{Delsarte1978} and Gabidulin \cite{Gabidulin1985} for every $m$, $n$, and $d$. 

Recently, Sheekey constructed new families of MRD codes in \cite{Sheekey2015}. They generalise the previously known Gabidulin codes. 
These codes are called {\em twisted Gabidulin codes} because of their analogy with the generalized twisted fields constructed by Albert (see Section \ref{subsec:semifields}).

In the same paper \cite{Sheekey2015}, the author gives an interesting connection between maximum scattered linear sets on a projective line $\PG(1,q^t)$, and MRD-codes of dimension $2t$ over $\F_q$ and minimum distance $t-1$.
If $U$ is a $t$-dimensional subspace of $\F_q^{2t}$, then 
$U$ can be represented by $\{(x,f(x))~:~x \in \F_{q^t}\}$ for
some $\F_q$-linear polynomial $f(Y)\in \F_q[Y]$. Now $U$ is scattered
w.r.t. the Desarguesian spread $D_{2,t,q}$ if for each $a\in \F_{q^t}^*$, the dimension of the intersection of $U$ with
$D_a=\{(x,ax)~:~x \in \F_{q^t}\}$ has dimension at most 1. This is equivalent with the condition that the rank of the $\F_q$-linear map $x\mapsto ax+bf(x)$ is at least $t-1$, for all $a,b \in \F_{q^t}$, $(a,b)\neq (0,0)$. 
Then $C_{U}=\{\varphi(ax+bf(x))~:~a,b \in \F_{q^t}, (a,b)\neq (0,0)\}$ is an $\F_q$-subspace of the vector space of $(t\times t)$-matrices, where $\varphi$ is an isomorphism associating a $(t\times t)$-matrix $M_g$ to each linearized polynomial $g(x)$ w.r.t. some fixed basis.
\begin{theorem}{\rm\cite{Sheekey2015}}
If $U$ is a maximum scattered space w.r.t. $\D_{2,t,q}$, then $C_{U}$
is an $\F_q$-linear MRD-code of dimension $2t$ and minimum distance $t-1$, and conversely.
\end{theorem}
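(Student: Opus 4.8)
The plan is to exploit the dictionary, already set up in the excerpt, between the $t$-dimensional subspace $U=\{(x,f(x)):x\in\F_{q^t}\}$ and the space of linearized polynomials, reading the three defining data of an MRD code (the ambient matrix size, the $\fq$-dimension, and the minimum distance) directly off the scattered condition. Throughout I identify $M_{t\times t}(\fq)$ with $\mathrm{End}_{\fq}(\F_{q^t})$ via $\varphi^{-1}$, so that a codeword $\varphi(ax+bf(x))$ is just the $\fq$-linear map $x\mapsto ax+bf(x)$ and its matrix rank equals the rank of this map.

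For the forward implication I would first pin down the $\fq$-dimension. Consider the $\fq$-linear map $\Phi:\F_{q^t}^2\to M_{t\times t}(\fq)$, $(a,b)\mapsto\varphi(ax+bf(x))$, whose image is $C_U$. Its kernel consists of pairs with $ax+bf(x)\equiv 0$: if $b=0$ this forces $a=0$, and if $b\neq0$ it forces $f=m_{-a/b}$ to be a scalar multiplication, whence $U=D_{-a/b}$ would itself be a spread element and hence not scattered. Since $U$ is scattered w.r.t. $\D_{2,t,q}$, the map $\Phi$ is injective and $\dim_{\fq}C_U=2t$, i.e. $|C_U|=q^{2t}$. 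Next I read off the distance: as recorded in the excerpt, scatteredness is exactly the statement that every nonzero map $x\mapsto ax+bf(x)$ has rank at least $t-1$, so every nonzero codeword has rank at least $t-1$ and the minimum distance satisfies $d\ge t-1$. Finally I invoke the Singleton-type bound $|C|\le q^{t(t-d+1)}$ for rank-metric codes in $M_{t\times t}(\fq)$, whose equality case is precisely the MRD condition recalled in the excerpt: from $|C_U|=q^{2t}$ we get $2t\le t(t-d+1)$, i.e. $d\le t-1$. Combining the two inequalities yields $d=t-1$ and equality in the bound, so $C_U$ is MRD of the stated parameters.

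For the converse I would reverse the construction. Starting from an $\fq$-linear MRD code $C$ with $|C|=q^{2t}$ and minimum distance $t-1$, view it as a $2t$-dimensional $\fq$-space $W\subseteq\mathrm{End}_{\fq}(\F_{q^t})$ in which every nonzero element has rank $t-1$ or $t$; the goal is to exhibit a scattered $f$ with $W=\{x\mapsto ax+bf(x)\}$. The guiding observation is that the codes produced by the forward construction carry an extra structure: $C_U$ is closed under post-composition by scalars, since $m_c\circ(ax+bf(x))=(ca)x+(cb)f(x)$, and it contains the identity $m_1$ and all invertible maps $m_a$, so it is a $2$-dimensional left $\F_{q^t}$-space containing $\mathrm{id}$. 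Once $W$ is presented in this normal form, taking $\mathrm{id}$ as one basis vector and letting $f$ be the other yields $W=\{ax+bf(x)\}$, and the rank hypothesis on $W$ becomes verbatim the condition that $f$ is scattered, closing the correspondence.

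The main obstacle is exactly this normalization step. The forward map lands not in arbitrary MRD codes but in those enjoying the left $\F_{q^t}$-linearity just described, so the honest converse must either adopt this $\F_{q^t}$-linearity (together with the presence of an invertible codeword) as a hypothesis, or work up to the equivalence of rank-metric codes generated by pre- and post-composition with invertibles, transposition, and field automorphisms, which relates a given MRD code to one of the standard graph form. I expect the genuine work to lie in showing that one can always reduce to the form $\{ax+bf(x)\}$ — equivalently, that the relevant MRD codes are precisely the $2$-dimensional $\F_{q^t}$-subspaces of $\mathrm{End}_{\fq}(\F_{q^t})$ of minimum rank $t-1$ — whereas the passage between the rank condition and scatteredness is immediate from the excerpt.
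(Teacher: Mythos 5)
Your forward direction is correct and is essentially the argument the survey intends (the survey itself gives no proof, only the setup paragraph, since the result is quoted from Sheekey): injectivity of $(a,b)\mapsto ax+bf(x)$ gives $\dim_{\F_q}C_U=2t$, scatteredness gives minimum rank at least $t-1$, and the Singleton-type bound $|C|\le q^{t(t-d+1)}$ forces $d=t-1$ with equality, hence MRD. One small point you gloss over: ``maximum scattered'' here just means $\dim U=t$, since by Theorem \ref{Desarguesian upper bound} a scattered subspace w.r.t.\ $\D_{2,t,q}$ has dimension at most $2t/2=t$; and the representation $U=\{(x,f(x))\}$ already presupposes $U\cap D_\infty=0$, which holds only after a harmless coordinate change preserving the spread.

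The genuine problem is in your reading of ``and conversely.'' You take it to mean that \emph{every} $\F_q$-linear MRD code in $M_{t\times t}(\fq)$ of dimension $2t$ and minimum distance $t-1$ arises as some $C_U$, and you then (correctly) observe that this would require a nontrivial normalization to the form $\{ax+bf(x)\}$ --- and you leave that step as an acknowledged obstacle. But that is not the claim. The intended converse concerns the same $U$ and the same construction: if $C_U$ is an MRD code with these parameters, then $U$ is maximum scattered. This follows in one line from the equivalence you already wrote down: minimum distance $t-1$ means every nonzero map $x\mapsto ax+bf(x)$ has rank at least $t-1$, which is verbatim the scattered condition on $U$, and a $t$-dimensional scattered space is maximum by the bound above. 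The stronger classification you are reaching for is not available without an extra hypothesis (essentially that the code is a left $\F_{q^t}$-subspace of $\mathrm{End}_{\fq}(\F_{q^t})$, i.e.\ that its left idealiser contains a copy of $\F_{q^t}$); general $\F_q$-linear MRD codes with $|C|=q^{2t}$ and $d=t-1$ need not be of this form. So your proof of the theorem as stated is complete once you replace your converse by the one-line argument you already have in hand, and the ``main obstacle'' you identify belongs to a different (and stronger) statement.
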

This one-to-one correspondence is particularly nice since it preserves equivalence.
\begin{theorem}{\rm\cite{Sheekey2015}}
Two scattered $\F_q$-linear sets $\B(U)$ and $\B(U' )$ of rank $t$ are equivalent in $\PG(1,q^t)$ if and only if $C_U$ and $C_{U'}$ are equivalent MRD-codes.
\end{theorem}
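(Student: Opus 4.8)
The plan is to show that both equivalence relations in the statement are governed by one and the same datum — the $\F_q$-subspace $U\subseteq V(2,q^t)$ up to the action of $\mathrm{\Gamma L}(2,q^t)$ — and then to read off the biconditional by intertwining the two group actions. Throughout I write $U=U_f=\{(x,f(x)):x\in\F_{q^t}\}$ for a suitable $\F_q$-linearized polynomial $f$, so that $C_U=\{x\mapsto ax+bf(x):a,b\in\F_{q^t}\}$ is a $2t$-dimensional $\F_q$-subspace of $\mathrm{End}_{\F_q}(\F_{q^t})\cong M_t(\fq)$, and I record the basic duality that the map $x\mapsto ax+bf(x)$ has kernel $U\cap D_{-a/b}$; hence $U$ is scattered exactly when every nonzero element of $C_U$ has rank $\ge t-1$, which is the MRD condition.

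First I would settle the easy implication at the level of subspaces. A collineation $g\in\mathrm{P\Gamma L}(2,q^t)$ lifts to a semilinear map $\hat g=(M,\sigma)\in\mathrm{\Gamma L}(2,q^t)$ with $g(\B(U))=\B(\hat g(U))$, so $\mathrm{\Gamma L}$-equivalent subspaces produce projectively equivalent linear sets. Writing $M=(m_{ij})$ and substituting, a direct computation shows that $\hat g(U)$ is again a graph $U_{f'}$ and that the reparametrisation $(a,b)\mapsto (a,b)M$ yields
\[
C_{\hat g(U)}=\sigma\circ C_U\circ\psi^{-1},
\]
where $\psi\colon x\mapsto m_{11}x^\sigma+m_{21}f(x)^\sigma$ is the (invertible) change of first coordinate and $\sigma$ acts as an invertible $\F_q$-semilinear map of $\F_{q^t}$. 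Thus $\mathrm{\Gamma L}$-equivalence of the subspaces forces a rank-metric equivalence of the codes, which gives one half of each direction once the fibres of $U\mapsto\B(U)$ are understood.

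The two substantive reductions are then: (i) if $\B(U)=\B(U')$ for scattered rank-$t$ subspaces $U,U'$, then $C_U$ and $C_{U'}$ are equivalent; and (ii) every rank-metric equivalence between codes of the form $C_U,C_{U'}$ is realised by some $(M,\sigma)$ as above, hence descends to a collineation carrying $\B(U)$ to $\B(U')$. For (ii) the tool is the idealiser (nucleus) structure: the field of multiplications $\mathbb{T}=\{x\mapsto ax:a\in\F_{q^t}\}\cong\F_{q^t}$ lies in the left idealiser of $C_U$, and for scattered $U$ it is the whole of it. Since an equivalence $C\mapsto L\circ C\circ R$ conjugates the idealisers and all field copies $\F_{q^t}\hookrightarrow\mathrm{End}_{\F_q}(\F_{q^t})$ are conjugate (Skolem--Noether), one may normalise $L,R$ to stabilise $\mathbb{T}$, which pins them down to the multiplication-plus-automorphism shape produced by $\mathrm{\Gamma L}(2,q^t)$. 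If the notion of rank-metric equivalence in use also includes the transpose/adjoint, one must match it with the duality on linear sets: the adjoint code corresponds to the adjoint subspace $U^{\perp}$, whose linear set is the dual scattered linear set of the same rank $t$ (cf. the duality theorem quoted above), and one checks it is absorbed into $\mathrm{P\Gamma L}(2,q^t)$-equivalence.

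The main obstacle is exactly the pair (i)--(ii): the mismatch between $\mathrm{\Gamma L}(2,q^t)$-equivalence of the \emph{subspace} $U$ and $\mathrm{P\Gamma L}(2,q^t)$-equivalence of the \emph{point set} $\B(U)$. The assignment $U\mapsto\B(U)$ is far from injective, and a priori a linear-set equivalence need not lift to the subspace; the scattered (maximum) hypothesis is what rescues this, because then every point of $\B(U)$ has weight one. I would exploit this weight-one rigidity to show that the scattered rank-$t$ subspaces with a prescribed linear set form a single orbit under the stabiliser of $\B(U)$, and simultaneously that $C_U$ is insensitive, up to equivalence, to this remaining ambiguity. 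Controlling the full rank-metric equivalence group through the idealisers, and confirming that the transpose operation together with the weight-one structure yields exactly $\mathrm{P\Gamma L}(2,q^t)$-equivalence and nothing coarser, is where essentially all of the work lies; the remaining verifications (invertibility of $\psi$, that $C_U$ has $\F_q$-dimension $2t$, and the bookkeeping of $(a,b)\mapsto(a,b)M$) are routine.
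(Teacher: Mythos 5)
A preliminary remark: the survey quotes this theorem from \cite{Sheekey2015} without giving a proof, so there is no in-paper argument to compare against; I am judging your plan on its own terms. Your architecture is the standard and correct one: the computation showing that a semilinear map $(M,\sigma)$ carrying $U$ to $U'$ induces an equivalence $C_{U'}=\sigma\circ C_U\circ\psi^{-1}$ is right (modulo the minor point that $\psi$ can be singular when the image subspace is not a graph over the first coordinate, repaired by composing with a further projectivity, since a scattered subspace meets the spread element $\{(0,y)\}$ in dimension at most one), and the idealiser plus Skolem--Noether normalisation is the right device for showing every code equivalence has this shape --- though you still owe a proof that the left idealiser of $C_U$ is exactly the embedded copy of $\F_{q^t}$ and nothing larger, which follows from the MRD condition but is a genuine lemma, not a throwaway.

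The gap is in your reduction (i), and it is not bookkeeping but the mathematical heart of the theorem. You propose to use ``weight-one rigidity'' to conclude that all scattered rank-$t$ subspaces $W$ with $\B(W)=\B(U)$ form a single orbit under the stabiliser of $\B(U)$ in $\mathrm{\Gamma L}(2,q^t)$. Weight one only says that $W$ and $U$ each select one projective point from each spread element they meet; it imposes no relation between the two selections, and the single-orbit claim is false as stated: the adjoint subspace $U_{\hat f}=\{(x,\hat f(x))\}$, with $\hat f$ the adjoint of $f$ with respect to the trace form, always satisfies $\B(U_{\hat f})=\B(U_f)$ yet is in general not in the $\mathrm{\Gamma L}(2,q^t)$-orbit of $U_f$. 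You do mention the adjoint, but with the logic inverted: since $\B(U_f)$ and $\B(U_{\hat f})$ are literally the same point set, it is the code equivalence (matrix transposition) that must absorb $U_{\hat f}$, not the $\mathrm{P\Gamma L}$-equivalence of linear sets. Even granting that, closing the argument requires showing that $U_f$ and $U_{\hat f}$ are, up to the stabiliser, the \emph{only} scattered subspaces defining the given linear set --- i.e. that the $\mathrm{\Gamma L}$-class of a maximum scattered linear set on a line is at most two, with the two classes exchanged by the adjoint. This is a substantive statement that follows from nothing in your sketch and is precisely the delicate point in this correspondence; until it is supplied, the implication ``$\B(U)$ and $\B(U')$ equivalent $\Rightarrow$ $C_U$ and $C_{U'}$ equivalent'' is not established.
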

We refer to \cite{Sheekey2015} for examples and further details.

\paragraph{Acknowledgment}
The author thanks the members of the Algebra group of Sabanc\i{} University for their hospitality and the anonymous referees for their comments and suggestions.


\end{document}